\documentclass{amsart}
\usepackage{amsmath}
\usepackage{paralist}
\usepackage{amsfonts}
\usepackage{amssymb}
\usepackage{amsthm}
\usepackage{amscd}
\usepackage{amsrefs}
\usepackage{float}
\usepackage{tikz}
\usepackage{graphicx}
\usepackage[colorlinks=true]{hyperref}
\hypersetup{urlcolor=blue, citecolor=red}
\usepackage{hyperref}

  \textheight=8.2 true in
   \textwidth=5.0 true in
    \topmargin 30pt
     \setcounter{page}{1}

\newtheorem{theorem}{Theorem}[section]

\newtheorem{lemma}[theorem]{Lemma}
\newtheorem{proposition}[theorem]{Proposition}

\theoremstyle{definition}
\newtheorem{definition}[theorem]{Definition}
\newtheorem{remark}[theorem]{Remark}

\newcommand{\T}{\mathbb{T}}
\newcommand{\R}{\mathbb{R}}
\newcommand{\C}{\mathbb{C}}
\newcommand{\Z}{\mathbb{Z}}
\newcommand{\N}{\mathbb{N}}

\begin{document}

\title[Periodic solutions to the modified Korteweg-de Vries equation]{On the existence of periodic solutions to the modified Korteweg-de Vries equation below $H^{1/2}(\mathbb{T})$}

\author{Robert Schippa}
\address{Fakult\"at f\"ur Mathematik, Universit\"at Bielefeld, Postfach 10 01 31, 33501 Bielefeld, Germany}
 \keywords{dispersive equations, a priori estimates, modified Korteweg-De Vries equation}
\email{robert.schippa@uni-bielefeld.de}
\keywords{dispersive equations \and modified Korteweg-de Vries equation \and existence of solutions \and short-time Fourier restriction norm method}
\subjclass[2010]{35Q53 \and 42B37}

\begin{abstract}
Existence and a priori estimates for real-valued periodic solutions to the modified Korteweg-de Vries equation with initial data in $H^s$ are established for $s>0$. The short-time Fourier restriction norm method is employed to overcome the derivative loss. Further, non-existence of solutions below $L^2$ is proved conditional upon conjectured linear Strichartz estimates.
\end{abstract}

\maketitle
\bigskip

\section{Introduction}
\label{section:introduction}
In this paper we consider the Cauchy problem for the modified Korteweg-de Vries (mKdV) equation
\begin{equation}
\label{eq:mKdV}
\left\{\begin{array}{cl}
\partial_t u + \partial_{xxx} u &= \pm u^2 \partial_x u ,  \; (t,x) \in \mathbb{R} \times \mathbb{T}, \\
u(0) &= u_0, \end{array} \right.
\end{equation}
posed on the circle $\mathbb{T} = \mathbb{R}/(2 \pi \mathbb{Z})$ with real-valued initial data $u_0$.
On the real line, there is the scaling symmetry
\begin{equation*}
u(t,x) \rightarrow \lambda u(\lambda^3 t, \lambda x), \; u_0(x) \rightarrow \lambda u_0(\lambda x),
\end{equation*}
which leads to the scaling invariant homogeneous Sobolev space $\dot{H}^{-1/2}(\mathbb{R})$.\\
The energy is given by
\begin{equation}
\label{eq:energies}
E[u] = \int_{\T} \frac{(\partial_x u)^2}{2} \pm \frac{u^4}{12},
\end{equation}
where the signs from \eqref{eq:mKdV} match the signs in \eqref{eq:energies}. Hence, the positive sign gives rise to the defocusing and the negative sign gives rise to the focusing modified Korteweg-de Vries equation. The mKdV equation is closely related to the classical Korteweg-de Vries equation
\begin{equation}
\label{eq:KdV}
\left\{\begin{array}{cl}
\partial_t u + \partial_{xxx} u &=  \partial_x (u^2)/2, \quad (t,x) \in \mathbb{R} \times \mathbb{T}, \\
u(0) &= u_0.\end{array} \right.
\end{equation}
Both Cauchy problems were thoroughly investigated, and the list of literature is extensive. Below, we do not give a complete description of previous works on \eqref{eq:mKdV} or \eqref{eq:KdV}, but rather an excerpt of work more closely related to this article. The reader is also referred to the list of literature therein.

First well-posedness results were established by energy arguments (\cite{BonaSmith1975,AbdelouhabBonaFellandSaut1989}). Beyond, Bourgain proved in \cite{Bourgain1993FourierTransformRestrictionPhenomenaII} that the Cauchy problem for the mKdV equation is analytically locally well-posed in $H^s(\mathbb{T})$ for $s \geq 1/2$ and globally well-posed in $H^s(\mathbb{T})$ for $s \geq 1$ in Fourier restriction spaces.\\
Making use of the $I$-method, it was proved in \cite{CollianderKeelStaffilaniTakaokaTao2003} by Colliander et al. that \eqref{eq:mKdV} is globally well-posed in $H^s(\mathbb{T})$ for $s \geq 1/2$.\\
It is well-known that one can map solutions from the defocusing mKdV equation to solutions to the KdV equation employing the Miura transform (cf. \cite{Miura1968,KappelerTopalov2005}). With the KdV equation being completely integrable, one also finds the mKdV equation to be completely integrable.

We stress that although several of the symmetries of the mKdV equation are certainly used in the proof of the main result, in particular that real-valued initial data give rise to real-valued solutions, the method does not depend on complete integrability. Additionally, we discuss regularity and existence of solutions to the KdV-mKdV-equation (cf. \cite{Molinet2012})
\begin{equation}
\label{eq:KdVmKdVEquation}
\left\{\begin{array}{cl}
\partial_t u + \partial_{xxx} u &= u\partial_x u \pm u^2 \partial_x u, \quad (t,x) \in \R \times \T \\
u(0) &= u_0 \in H^s(\T)
\end{array}\right.
\end{equation}
and the following mKdV-mKdV-system
\begin{equation}
\label{eq:mKdVmKdVSystem}
\left\{\begin{array}{cl}
\partial_t u + \partial_{xxx} u &= \partial_x (u v^2), \quad (t,x) \in \R \times \T \\
\partial_t v + \partial_{xxx} v &= \partial_x (v u^2)
\end{array}\right.
\end{equation}
with $(u(0),v(0)) = (u_0,v_0) \in H^s(\T) \times H^s(\T)$. The analysis gives the same a priori estimates like for the mKdV-equation.

Further, the strategy can be adapted to consider generalized KdV equations (cf. \cite{Staffilani1997GKDV,CollianderKeelStaffilaniTakaokaTao2004}) like
\begin{equation*}
 \partial_t u + \partial_{xxx} u = \pm u^{k-1} \partial_x u, \; (t,x) \in \mathbb{R} \times \mathbb{T}
\end{equation*}
or dispersion generalized equations like
\begin{equation*}
\partial_t u + \partial_x D_x^{a-1} u = \pm u^2 \partial_x u, \; (t,x) \in \mathbb{R} \times \T,
\end{equation*}
which are no longer amenable to inverse scattering techniques.

Exploiting the integrability properties and the inverse scattering transform, Kappeler and Topalov showed \eqref{eq:mKdV} in the defocusing case to be globally well-posed in $L^2(\mathbb{T})$ (cf. \cite{KappelerTopalov2005})
with a notion of solutions defined through smooth approximations. From Sobolev embedding one finds that these solutions satisfy the mKdV equation in the sense of generalized functions as soon as $s \geq 1/6$. The result was recently extended to the real line case and simplified in \cite{KillipVisan2018}.\\
Unconditional well-posedness of the mKdV equation by means of normal form reduction was shown in \cite{KwonOh2012} for $s \geq 1/2$.

Since the mKdV equation is completely integrable, there is an infinite number of conserved quantities along the flow. In addition to the conservation of energy, we record the conservation of mass for real-valued solutions, i.e.,
\begin{equation*}
\int_{\mathbb{T}} u^2(t,x) dx = \int_{\mathbb{T}} u_0^2(x) dx
\end{equation*}
because this provides us with an $L^2$-a priori estimate $\sup_{t \in \mathbb{R}} \Vert u(t) \Vert_{L^2(\mathbb{T})} \lesssim \Vert u_0 \Vert_{L^2(\mathbb{T})}$ for smooth solutions.\\
It is known that the data-to-solution map fails to be $C^3$ below $s<1/2$ (cf. \cite{Bourgain1997MeasuresInitialData}) and even fails to be uniformly continuous (cf. \cite{BurqGerardTzvetkov2002,ChristCollianderTao2003}) because of the resonant term on the diagonal.

Non-diagonal resonant interactions can be removed by changing to the renormalized modified Korteweg-de Vries equation
\begin{equation}
\label{eq:renormalizedmKdV}
\partial_t u + \partial_{xxx} u = (u^2 - \frac{1}{2 \pi} \int_{\mathbb{T}} u^2) \partial_x u = \mathfrak{N}(u).
\end{equation}
The solution to \eqref{eq:renormalizedmKdV} is given in terms of the solution to \eqref{eq:mKdV} by
\begin{equation}
\label{eq:relationRenormalizedmKdV}
v(t,x) = u(t,x - C( \int_0^t \int_{\mathbb{T}} u^2(t^\prime,x^\prime) dx^\prime dt^\prime )) = u(t,x-Ct \Vert u_0 \Vert_{L^2}^2).
\end{equation}
The norm of the solution to \eqref{eq:renormalizedmKdV} in non-negative Sobolev spaces equals the one of the solution to \eqref{eq:mKdV}, and most of the well-posedness results were shown for the renormalized mKdV equation as one can see that removing the off-diagonal interactions introduces a drift term governed by the $L^2$-norm.\\
In negative Sobolev spaces the nonlinear interaction $\mathfrak{N}$ for \eqref{eq:renormalizedmKdV} is defined in Fourier variables, see below. For the technical reason of having the non-diagonal interactions removed, we will also work on the renormalized version \eqref{eq:renormalizedmKdV}, but according to the above considerations, the Cauchy problems are essentially equivalent for regularities above $L^2$.

Not hinging on complete integrability, but instead employing a nonlinear modification of the Fourier restriction spaces, in \cite{NakanishiTakaokaTsutsumi2010} Nakanishi et al. showed local well-posedness of the Cauchy problem associated with \eqref{eq:mKdV} (i.e. continuous dependence on the initial data) for $s > 1/3$ and a priori estimates for $s>1/4$ (see also the previous work \cite{TakaokaTsutsumi2004} by Takaoka and Tsutsumi).\\
Combining the normal form approach from \cite{KwonOh2012} and the nonlinear ansatz from \cite{NakanishiTakaokaTsutsumi2010}, Molinet et al. proved unconditional well-posedness for $s \geq 1/3$ in \cite{MolinetPilodVento2019}.

In another work \cite{Molinet2012} by Molinet, it was shown that the Kappeler-Topalov solutions satisfy the defocusing mKdV equation in $L^2(\mathbb{T})$ in the distributional sense.\\
In the focusing case, relying on the conservation of mass and using short-time Fourier restriction, was shown the existence of global distributional solutions in $C_w(\R;L^2(\T))$, that means the solutions are continuous curves in $L^2(\T)$ endowed with the weak topology.\\
 In \cite{Molinet2012} was also proved that the data-to-solution map fails to be continuous from $L^2(\mathbb{T})$ to $\mathcal{D}^\prime([0,T])$ for non-constant initial data $u_0 \in H^{\infty}(\mathbb{T})$. Here, also short-time Fourier restriction norm spaces were used to control the cubic derivative interaction. We will revisit the analysis and see that one can control the nonlinear interaction below $L^2$ for suitable frequency dependent time localization.
 
On the real line, \eqref{eq:mKdV} is better behaved than on the torus because of stronger dispersive effects. In \cite{KenigPonceVega1993} (see also \cite{Tao2001}) was shown that \eqref{eq:mKdV} is locally well-posed for $s>1/4$ by a Picard iteration scheme in a resolution space capturing the dispersive effects. Global well-posedness for $s>1/4$ was also shown in \cite{CollianderKeelStaffilaniTakaokaTao2003}.\\
In \cite{ChristHolmerTataru2012} Christ et al. showed a priori estimates for smooth solutions for $-1/8<s \leq 1/4$ making use of the short-time Fourier restriction spaces.

When we refer to existence of solutions in the following, we refer to the existence of a data-to-solution mapping $S:H^s \rightarrow C([-T,T],H^s)$ where $T=T(\Vert u_0 \Vert_{H^s})>0$ with the following properties:
\begin{enumerate}
\item[(i)] $S(u_0)$ satisfies the equation in the distributional sense and $S(u_0)(0) = u_0$.
\item[(ii)] There exists a sequence of smooth global solutions $(u_n)$ such that $u_n \rightarrow S(u_0)$ in $C([-T,T],H^s)$ as $n \rightarrow \infty$.
\end{enumerate}
This notion was introduced in \cite{GuoOh2018} to discuss existence of solutions to the nonlinear Schr\"odinger equation on the circle for low regularities.\\
We recall why the second property is natural for two reasons following \cite{GuoOh2018}. Local well-posedness requires continuity of the data-to-solution map, but also from a practical point of view the construction of solutions typically requires at least one approximating sequence of smooth global solutions.

Main purpose of this article is to show the existence of solutions and a priori estimates below $H^{1/2}(\mathbb{T})$ up to $L^2(\mathbb{T})$ relying on localization in time of the Fourier restriction spaces. The frequency dependent localization in time introduces extra smoothing, which allows us to overcome the derivative loss for low regularities.

Essentially\footnote{For the actually more involved energy estimate see Section \ref{section:EnergyEstimates}.}, we will show the following three estimates for $T \in (0,1]$ and $s>0$:
\begin{equation*}
\left\{\begin{array}{cl}
\Vert u \Vert_{F^{s,\alpha}(T)} &\lesssim \Vert u \Vert_{E^s(T)} + \Vert \mathfrak{N}(u) \Vert_{N^{s,\alpha}(T)} \\
\Vert \mathfrak{N}(u) \Vert_{N^{s,\alpha}(T)} &\lesssim T^\theta \Vert u \Vert_{F^{s,\alpha}(T)}^3 \\
\Vert u \Vert_{E^s(T)}^2 &\lesssim \Vert u_0 \Vert_{H^s}^2 + T^\theta \Vert u \Vert_{F^{s,\alpha}(T)}^6
\end{array} \right.
\end{equation*}
We compare this set of estimates to estimates for the classical Fourier restriction norms.\\
Let $G$ denote in the following the nonlinearity of the dispersive equation under consideration (for more details on the notation, see e.g. \cite{Tao2006}). The first estimate relates to the $X^{s,b}$-energy estimate 
\begin{equation*}
\Vert \eta(t) u \Vert_{X^{s,b}} \lesssim_{b,\eta} \Vert u_0 \Vert_{H^s} + \Vert G(u) \Vert_{X^{s,b-1}} \quad (b>1/2).
\end{equation*}
Subsequently, one has to prove a nonlinear estimate 
\begin{equation*}
\Vert G(u) \Vert_{X^{s,b-1}} \lesssim g(\Vert u \Vert_{X^{s,b}}),
\end{equation*}
which is the classical analog of the second estimate from above.

The third estimate has no analog in classical $X^{s,b}$-spaces. This is due to the fact that performing a frequency dependent time localization only allows one to estimate the short-time Fourier restriction norm $F^{s,\alpha}(T)$ in terms of a short-time norm $N^{s,\alpha}(T)$ for the nonlinearity and an energy norm $E^s(T)$, which distinguishes dyadic frequency ranges. Here, $\alpha$ governs the ratio of time localization and frequency size.

 Consequently, one has to propagate the energy norm in terms of the short-time Fourier restriction norm. With the above set of estimates at disposal, bootstrap and compactness arguments allow us to prove the following theorem.
\begin{theorem}
\label{thm:localExistence}
Let $s>0$. Given $u_0 \in H^s(\mathbb{T})$, there is a function $T=T(\Vert u_0 \Vert_{H^s})$ so that there exists a local solution $u \in C([-T,T],H^s(\T))$ to \eqref{eq:mKdV}. Furthermore, we find the a priori estimate
\begin{equation*}
\sup_{t \in [-T,T]} \Vert u(t) \Vert_{H^s} \leq C \Vert u_0 \Vert_{H^s}
\end{equation*}
to hold.
\end{theorem}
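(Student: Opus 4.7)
The plan is to derive the a priori bound by combining the three displayed estimates into a bootstrap for smooth solutions, and to construct the rough solution through a Bona-Smith type approximation. I would work with the renormalized equation \eqref{eq:renormalizedmKdV}, the original mKdV equation being recovered by the phase shift \eqref{eq:relationRenormalizedmKdV}, and restrict attention to $t \in [0,T]$ since the equation is time-reversible.

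For a smooth global solution $u$ (which exists by the classical $H^\infty$ theory), abbreviate $X(T) = \Vert u \Vert_{F^{s,\alpha}(T)}$ and $Y(T) = \Vert u \Vert_{E^s(T)}$. Substituting the nonlinear estimate into the linear one, and then the resulting bound into the energy estimate, yields
\begin{equation*}
X(T)^2 \lesssim Y(T)^2 + T^{2\theta} X(T)^6 \lesssim \Vert u_0 \Vert_{H^s}^2 + T^\theta X(T)^6.
\end{equation*}
Since $X(T)$ is continuous in $T$ with $\limsup_{T \to 0^+} X(T) \lesssim \Vert u_0 \Vert_{H^s}$, a standard continuity argument shows that $X(T) \leq C_0 \Vert u_0 \Vert_{H^s}$ for all $T \in (0,T_\ast]$, provided $T_\ast^\theta \Vert u_0 \Vert_{H^s}^4$ is sufficiently small, i.e.\ $T_\ast = T_\ast(\Vert u_0 \Vert_{H^s})$. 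The embedding $E^s(T) \hookrightarrow L^\infty([-T,T]; H^s(\T))$ then converts this into the a priori estimate claimed in the theorem.

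To construct an actual solution at regularity $s>0$, I would set $u_{0,n} = P_{\leq 2^n} u_0$ and let $u_n$ denote the corresponding global smooth solutions. Applying the previous step to $u_n$ gives uniform bounds $\Vert u_n \Vert_{F^{s,\alpha}(T)} \leq C \Vert u_0 \Vert_{H^s}$ on a common interval $[-T,T]$ with $T = T(\Vert u_0 \Vert_{H^s})$. A difference estimate for $w_{n,m} = u_n - u_m$, which solves a linear inhomogeneous equation with coefficients depending trilinearly on $u_n$ and $u_m$, should show that $(u_n)$ is Cauchy in $C([-T,T]; H^{s'})$ for some $s' \in (0,s)$, with a rate governed by $\Vert P_{> 2^{\min(n,m)}} u_0 \Vert_{H^s}$. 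A Bona-Smith frequency-envelope refinement then upgrades this to strong convergence $u_n \to u$ in $C([-T,T]; H^s(\T))$. Passing to the limit in the equation shows that $u$ solves \eqref{eq:mKdV} distributionally, and property (ii) of the notion of solution is satisfied by construction.

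The principal obstacle is the difference estimate: since the data-to-solution map fails to be uniformly continuous below $H^{1/2}(\T)$, one cannot close a Lipschitz-type inequality for $w_{n,m}$ at the level $H^s$. Instead the trilinear estimate has to be revisited for the difference at the lower regularity $H^{s'}$, where the derivative loss is compensated by a Bona-Smith gain coming from the frequency truncation of the data. The $L^2$ mass conservation available for the real-valued smooth approximants $u_n$ plays an essential role here, both in propagating a fixed drift in \eqref{eq:relationRenormalizedmKdV} and in controlling the diagonal resonance absorbed when passing to \eqref{eq:renormalizedmKdV}; the remaining cubic interaction is then handled by the same short-time Fourier restriction machinery that produced the a priori estimate above.
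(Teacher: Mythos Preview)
Your bootstrap for the a priori bound is on track in spirit, but you are working with the simplified energy inequality from the introduction. The estimate actually proved in Section~\ref{section:EnergyEstimates} (cf.~\eqref{eq:completePropagation} and Proposition~\ref{prop:energyPropagation}) carries an extra boundary term $M^{-d(s)}\Vert u\Vert_{F^{s,1}(T)}^4$ with no power of $T$ in front; it comes from the normal-form boundary term $B_4^{s,a,M}$, which is not small in $T$ but only in the frequency threshold $M$. The correct continuity argument therefore needs a two-step choice---first $M=M(\Vert u_0\Vert_{H^s})$ to absorb this term, then $T$ small---and without it the bootstrap does not close for $0<s\leq 1/4$.

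The more substantial divergence is in how you construct the rough solution. You propose a Bona--Smith scheme: show directly that $(u_n)$ is Cauchy in $C_tH^{s'}$ for some $s'<s$ via a difference estimate, then upgrade. The paper does not do this; it extracts a convergent subsequence by compactness (Lemma~\ref{lem:compactnessLemma}), upgrades to convergence in $E^s(T)$ via a uniform tail estimate, and only then uses the linear and trilinear bounds to pass to $F^{s,1}(T)$. The reason is made explicit in Section~\ref{section:EnergyEstimates}: the pointwise multiplier bound \eqref{eq:pointwiseMultiplierBound} hinges on symmetrizing $\sum_i a(n_i)n_i$ over all four frequencies, and this symmetrization is unavailable for the difference equation $\partial_t w+\partial_x^3 w=\mathfrak{N}(u_n)-\mathfrak{N}(u_m)$, where the weight sits on a single factor. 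Without that cancellation the energy estimate for $w$ does not close at any regularity below $1/2$, so your proposed Cauchy argument at level $s'$ faces a structural obstacle you have not addressed. The compactness route sidesteps this entirely: once convergence in $E^s(T)$ is known from compactness, only the trilinear estimate (which needs no symmetrization) is required to close in $F^{s,1}(T)$ and identify the limit as a distributional solution.
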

There is also the recent work \cite{KillipVisanZhang2018} by Killip et al. relying on complete integrability, where a priori estimates for smooth periodic initial data are shown, too.\\
For a solution to \eqref{eq:mKdV} with smooth initial data $u_0$, the a priori estimate
\begin{equation*}
\Vert u(t) \Vert_{H^s(\T)} \lesssim \Vert u_0 \Vert_{H^s} (1+\Vert u_0 \Vert_{H^s}^2)^{\frac{|s|}{1-2|s|}}
\end{equation*}
is proved in \cite{KillipVisanZhang2018} for $-1/2<s<1/2$. By means of the transformation \eqref{eq:relationRenormalizedmKdV}, the a priori estimate extends to smooth solutions to \eqref{eq:renormalizedmKdV}.\\
Notably, in \cite{KillipVisanZhang2018} are also proved a priori estimates for smooth solutions to the cubic nonlinear Schr\"odinger equation
\begin{equation}
\label{eq:cubicNLS}
\left\{\begin{array}{cl}
i \partial_t u + \partial_{xx} u &= \pm |u|^2 u ,  \; (t,x) \in \mathbb{R} \times \mathbb{T}, \\
u(0) &= u_0 \in H^s(\mathbb{T}), \end{array} \right.
\end{equation}
in the same range $-1/2<s<1/2$. However, in \cite{GuoOh2018} it was shown that because the data-to-solution mapping can be constructed with the aid of compactness arguments for a renormalized version of \eqref{eq:cubicNLS} for $-1/8<s<0$, it can not exist for \eqref{eq:cubicNLS}.\\
In the context of Fourier Lebesgue spaces, which scale like negative Sobolev spaces, this program was carried out for \eqref{eq:mKdV} in \cite{KappelerMolnar2017} using on complete integrability.

Another purpose of this work is to point out the critical interactions, which require further comprehension, to clarify existence in negative Sobolev spaces. For the non-linear estimate we shall see that localizing time higher than reciprocal to the frequency size allows us to control the renormalized nonlinear interaction for negative Sobolev regularities.

The situation for the energy estimate is more delicate as the critical interactions in the energy estimate occur at small second resonance. These are the interactions we can not estimate below $L^2$ in this work without the currently unproved $L^6_{t,x}$-Strichartz estimate
\begin{equation*}
\Vert u \Vert_{L^8_{t,x}(\mathbb{R} \times \T)} \lesssim \Vert u \Vert_{X_{Airy}^{0+,4/9+}}.
\end{equation*}
The essentially sharp above display would follow from
\begin{equation}
\label{eq:L8Hypothesis}
\Vert u \Vert_{L^8_{t,x}(\mathbb{R} \times \T)} \lesssim \Vert u \Vert_{X_{Airy}^{0+,1/2+}},
\end{equation}
which was conjectured in \cite{Bourgain1993FourierTransformRestrictionPhenomenaII}. Although there has been substantial progress on Strichartz estimates on tori (cf. \cite{HuLi2013}), \eqref{eq:L8Hypothesis} seems to be out of reach at the moment. We refer to Subsection \ref{subsection:EnergyEstimatesNegativeSobolevSpaces} for a more detailed discussion.
\begin{theorem}
\label{thm:nonExistence}
Suppose that \eqref{eq:L8Hypothesis} holds.\\
Then, there is $s^\prime < 0$ so that for $s^\prime<s<0$ there exists $T=T(\Vert u_0 \Vert_{H^s})$ such that there exists a local solution $u \in C([-T,T],H^s(\T))$ to \eqref{eq:renormalizedmKdV}, and we find the a priori estimate
\begin{equation*}
\sup_{t \in [-T,T]} \Vert u(t) \Vert_{H^s} \leq C \Vert u_0 \Vert_{H^s}
\end{equation*}
to hold.\\
Furthermore, solutions to \eqref{eq:mKdV} do not exist for $s^\prime < s < 0$.
\end{theorem}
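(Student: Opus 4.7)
The plan has two pieces matching the theorem statement. First, I would prove conditional local existence and the a priori bound for the renormalized equation \eqref{eq:renormalizedmKdV} by pushing the three short-time Fourier restriction estimates underlying Theorem~\ref{thm:localExistence} into the range $s^\prime<s<0$. Second, I would deduce nonexistence for \eqref{eq:mKdV} from this conditional existence via the translation identity \eqref{eq:relationRenormalizedmKdV}.

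For the existence part, the nonlinear estimate $\Vert \mathfrak{N}(u)\Vert_{N^{s,\alpha}(T)}\lesssim T^\theta\Vert u\Vert_{F^{s,\alpha}(T)}^3$ is the more flexible of the three. Since we work with $\mathfrak{N}$, the diagonal-resonant contribution is already gone, and the remaining cubic output is dominated by the modulation; I would raise $\alpha$ above $1$ to buy enough frequency-dependent time localization to compensate a small amount of negative regularity, as already advertised in the introduction. The structural linear identity relating $F^{s,\alpha}(T)$, $N^{s,\alpha}(T)$, and $E^s(T)$ survives verbatim. The energy estimate $\Vert u\Vert_{E^s(T)}^2\lesssim \Vert u_0\Vert_{H^s}^2+T^\theta\Vert u\Vert_{F^{s,\alpha}(T)}^6$ is the delicate one: at small second resonance the cubic term has no modulation gain left, and one needs a bilinear Strichartz input of $L^8_{t,x}$ type on the Airy profile, supplied precisely by the hypothesis \eqref{eq:L8Hypothesis}. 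Feeding these three estimates into the bootstrap and compactness scheme of Theorem~\ref{thm:localExistence} then yields the solution map and the a priori bound for some $s^\prime<s<0$.

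For the nonexistence part, suppose for contradiction that a data-to-solution map $S$ for \eqref{eq:mKdV} satisfying (i) and (ii) exists at some $s\in(s^\prime,0)$, and fix $u_0\in H^s(\T)\setminus L^2(\T)$, which is possible since $s<0$. By (ii) there is a sequence $(u_{0,n})\subset H^\infty(\T)$ with $u_{0,n}\to u_0$ in $H^s$ whose smooth mKdV solutions $u_n$ converge to $S(u_0)$ in $C([-T,T],H^s)$. The smooth version of \eqref{eq:relationRenormalizedmKdV} reads
\begin{equation*}
u_n(t,x)=v_n\bigl(t,x-Ct\Vert u_{0,n}\Vert_{L^2}^2\bigr),
\end{equation*}
where $v_n$ is the renormalized mKdV solution with the same datum. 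The existence part applied to $v_n$ produces $v_n\to v$ in $C([-T,T],H^s)$ where $v$ denotes the renormalized solution with datum $u_0$. Convergence of both $u_n$ and $v_n$, combined with the fact that translations act isometrically on $H^s(\T)$, forces the translation amplitudes $Ct\Vert u_{0,n}\Vert_{L^2}^2$ to converge modulo $2\pi$ for every $t\in[-T,T]$, which in turn forces $\Vert u_{0,n}\Vert_{L^2}^2$ itself to be Cauchy in $\R$. However, $u_0\notin L^2$ allows one to construct two smooth $H^s$-approximations with incompatible $L^2$-behavior --- e.g. low-frequency truncations, whose $L^2$-norms diverge, versus the same truncations perturbed by an arbitrarily $H^s$-small high-frequency cosine carrying a prescribed $L^2$-mass --- producing two different candidate limits for $S(u_0)$ and contradicting its uniqueness.

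The hard part is the energy estimate at negative regularity. Interactions at small second resonance are a bilinear output on the Airy characteristic where modulation and Bernstein yield no further gain; one is forced to expose an $L^8_{t,x}$ Strichartz factor, and the range of admissible regularities is controlled linearly by the conjecture \eqref{eq:L8Hypothesis}. Everything else --- extending the nonlinear estimate via a larger $\alpha$, the bootstrap and compactness step, and the nonexistence transfer --- is either structural or a translation-divergence argument of Guo--Oh type \cite{GuoOh2018} adapted from the $L^2$ phase rotation in the Schr\"odinger setting to the $L^2$ spatial shift \eqref{eq:relationRenormalizedmKdV} relevant here.
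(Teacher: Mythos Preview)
Your existence sketch matches the paper: raise the time-localization parameter to $\alpha=1+\delta$ to push the trilinear estimate below $L^2$ (this is the second half of Proposition~\ref{prop:nonlinearEstimate}), and use \eqref{eq:L8Hypothesis} --- more precisely its interpolated $L^6$ consequence \eqref{eq:L6StrichartzHypothesis} --- to close the sextic energy remainder at small second resonance (this is Proposition~\ref{prop:negativeEnergyPropagationMKDV}). The bootstrap/compactness machinery then carries over verbatim.

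The nonexistence argument has the right skeleton but the final step is a genuine gap. Property (ii) is \emph{existential}: it only guarantees \emph{one} smooth approximating sequence whose solutions converge to $S(u_0)$. You therefore cannot manufacture a second sequence with different $L^2$-behavior and claim it also converges to $S(u_0)$; nothing in the definition forces that. Your ``two candidate limits'' contradiction does not follow.

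The fix is simpler than what you propose and is already contained in your intermediate step. You correctly argue that convergence of both $u_n$ and (a subsequence of) $v_n$ in $C_tH^s$ forces $\Vert u_{0,n}\Vert_{L^2}^2$ to be Cauchy, hence bounded. But then weak-$L^2$ compactness together with $u_{0,n}\to u_0$ in $H^s\hookrightarrow\mathcal D'$ gives $u_0\in L^2$, contradicting your choice of $u_0$ directly --- no second sequence needed. Equivalently (and this is how the paper phrases it, following \cite[Section~9]{GuoOh2018}): if instead $\Vert u_{0,n}\Vert_{L^2}\to\infty$ along a subsequence, then the Riemann--Lebesgue lemma applied to the factors $e^{-im c_n t}$ in $\hat u_n(t,m)=e^{-imc_n t}\hat v_n(t,m)$ forces $u_n\to P_0 v$ in $\mathcal D'$, so $S(u_0)$ is constant in $x$ and hence $u_0=S(u_0)(0)$ is constant, again a contradiction. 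Either branch closes the argument. A minor related point: the existence part only gives subsequential convergence of $v_n$ via compactness, not full-sequence convergence to a unique $v$; this is enough for the dichotomy above.
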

As pointed out above, the global well-posedness result from \cite{KappelerTopalov2005} exceeds Theorem \ref{thm:localExistence} in the defocusing case.\\
However, the analysis gives the same regularity results for related non-integrable models.
\begin{theorem}
\label{thm:localExistenceExtension}
Let $s>0$. Given $u_0 \in H^s(\mathbb{T})$, there is a function $T=T(\Vert u_0 \Vert_{H^s})$ so that there exists a local solution $u \in C([-T,T],H^s(\T))$ to \eqref{eq:KdVmKdVEquation}. Furthermore, we find the a priori estimate
\begin{equation*}
\sup_{t \in [-T,T]} \Vert u(t) \Vert_{H^s} \leq C \Vert u_0 \Vert_{H^s}
\end{equation*}
to hold. The respective existence and regularity assertions are also true for \eqref{eq:mKdVmKdVSystem}.
\end{theorem}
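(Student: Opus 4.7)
My plan is to mirror the proof of Theorem \ref{thm:localExistence} in each case, establishing the analogous three key estimates---short-time linear estimate, nonlinear estimate in $N^{s,\alpha}(T)$, and energy-propagation estimate---after which the bootstrap and compactness arguments used there yield both existence and the a priori bound without modification. The burden is therefore only to supply the trilinear and bilinear estimates attached to the new nonlinearities; the function spaces $F^{s,\alpha}(T)$, $N^{s,\alpha}(T)$, $E^s(T)$, the frequency-dependent time localization, and the smooth approximation scheme are reused verbatim.

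For \eqref{eq:KdVmKdVEquation} I split the nonlinearity as $u\partial_x u \pm u^2 \partial_x u$. The cubic piece is exactly the one already analyzed for \eqref{eq:renormalizedmKdV} (after accounting for the harmless removable diagonal), so only the quadratic piece is new. For the nonlinear estimate I need a bound of the form $\Vert u\partial_x u\Vert_{N^{s,\alpha}(T)} \lesssim T^\theta \Vert u\Vert_{F^{s,\alpha}(T)}^2$; this is easier than its cubic counterpart because the KdV resonance $3\xi_1\xi_2(\xi_1+\xi_2)$ provides stronger separation, and the short-time localization absorbs the single derivative once $s>0$. For the energy contribution from the quadratic term I pair with $\Lambda^{2s} u$, symmetrize, and use $\int u^2 \partial_x u\,dx = 0$ to kill the diagonal; the remaining commutator $[\Lambda^{2s},u]\partial_x u$ loses only lower-order derivatives and is controlled by standard KdV-type multilinear estimates in the short-time spaces, which are well within the budget at $s>0$.

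For \eqref{eq:mKdVmKdVSystem} I work on the product space $F^{s,\alpha}(T)\times F^{s,\alpha}(T)$ with energy $\Vert u\Vert_{E^s(T)}^2 + \Vert v\Vert_{E^s(T)}^2$. The nonlinearities $\partial_x(uv^2)$ and $\partial_x(vu^2)$ are trilinear of exactly the same shape as $\partial_x(u^3)$, so each dyadic frequency-localized piece admits the mKdV trilinear bound after distributing the three Littlewood--Paley factors onto the correct function and invoking the same $L^4$/$L^6$-type Strichartz inputs. The energy estimate exploits the cancellation
\[
\int \partial_x(uv^2)\cdot u\,dx + \int \partial_x(vu^2)\cdot v\,dx = \tfrac{1}{2}\int \partial_x(u^2 v^2)\,dx = 0,
\]
which plays the role of $\int u^3 \partial_x u\,dx = 0$ for mKdV and removes the most dangerous resonant contribution. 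The remaining near-resonant contributions concentrate on the second resonance factor $(\xi_1+\xi_2)(\xi_2+\xi_3)(\xi_3+\xi_1)$, which is insensitive to whether a given factor is labelled $u$ or $v$, so the dispersive gain obtained in the mKdV analysis carries over.

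The step I expect to be the main obstacle is the bookkeeping for the system energy estimate: the threefold symmetry of the mKdV integrand is weakened to only a $u$/$v$-symmetry, and one has to check carefully that the cancellation at small second resonance survives after the Littlewood--Paley decomposition when two of the three interacting factors are of one type and one of the other. Assuming that the pairing with $\Lambda^{2s}u$ and $\Lambda^{2s}v$ is organized symmetrically, the same multilinear inputs as in the mKdV case apply and the threshold $s>0$ is unchanged.
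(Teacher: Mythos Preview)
Your overall strategy and the $L^2$ cancellation identity for the system are correct, but there is a genuine gap in the nonlinear estimate for \eqref{eq:mKdVmKdVSystem}. You claim that $\partial_x(uv^2)$ obeys the same trilinear bound as $\partial_x(u^3)$ because the shape is identical; however, the mKdV trilinear estimate of Section~\ref{section:shorttimeTrilinearEstimates} is proved for the \emph{renormalized} nonlinearity $\mathfrak{N}$, with the diagonal interaction already subtracted via the conserved quantity $\Vert u\Vert_{L^2}^2$. In $\partial_x(uv^2)$ the diagonal contributions include terms of the form $(in)\hat v(n)\sum_m \hat u(m)\hat v(-m)$, i.e.\ $(\partial_x v)\!\int uv$, and $\int uv$ is not conserved by the coupled flow, so no gauge or translation removes it. This resonant piece is not covered by the $(*)$-restricted convolution estimates you invoke, and without a separate argument the $N^{s,1}(T)$ bound is incomplete. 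The paper closes this gap with a short direct lemma showing that the frequency-dependent time localization alone suffices to bound $\Vert(\partial_x u)\!\int uv\Vert_{N^{s,1}(T)}$ for $s>0$; the same resonant terms resurface after the integration by parts in the energy estimate (when $\partial_t\hat u$ is substituted back), so the lemma is needed there as well.

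A smaller omission concerns \eqref{eq:KdVmKdVEquation}: if you reuse the mKdV energy machinery and integrate $R_4^{s,a}$ by parts in time to push the threshold down to $s>0$, then the substitution of $\partial_t\hat u$ now contains the quadratic KdV piece in addition to the cubic one. This produces a genuine quintic cross term (three factors from the mKdV symmetrization times two from $u\partial_x u$) that is absent from the pure mKdV analysis. It is easy to estimate, but it is not ``exactly the one already analyzed,'' and your commutator sketch for the quadratic energy contribution does not account for it.
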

The article is structured as follows: In Section \ref{section:Notation} we introduce notation and state basic estimates for short-time Fourier restriction spaces. In Section \ref{section:Proof} we finish the proof of the a priori estimates relying on a short-time trilinear estimate from Section \ref{section:shorttimeTrilinearEstimates} and energy estimates from Section \ref{section:EnergyEstimates}. Multilinear estimates to prove the short-time trilinear estimate are discussed in Section \ref{section:MultilinearEstimates}. In Section \ref{section:ExtensionFurtherModels} existence and regularity of solutions to the KdV-mKdV-equation and the mKdV-mKdV-system are discussed.
\section{Notation and Basic Properties of Function Spaces}
\label{section:Notation}
Most of the correspondent estimates on the real line of estimates below can already be found in the seminal paper \cite{IonescuKenigTataru2008} by Ionescu et al., where short-time Fourier restriction spaces were introduced. Hence, we omit most of the proofs and mainly record the estimates, which will be used later. We point out that the idea of carrying out the analysis on small frequency-dependent time intervals has been utilized in works (cf. \cite{KochTataru2007,KochTzvetkov2005,ChristCollianderTao2008}) predating \cite{IonescuKenigTataru2008}.

Let $\eta_0: \mathbb{R} \rightarrow [0,1]$ denote an even smooth function, $\mathrm{supp}(\eta_0) \subseteq [-8/5,8/5]$, $\eta_0 \equiv 1$ on $[-5/4,5/4]$. For $k \in \mathbb{N}$ we set $\eta_k(\tau) = \eta_0(\tau/2^k)-\eta_0(\tau/2^{k-1})$. We set $\eta_{\leq m} = \sum_{j=0}^m \eta_j$ for $m \in \mathbb{N}$ and set $\mathbb{N}_0 = \mathbb{N} \cup \{0\}$.\\
We denote unions of intervals $I_n = \left\{\xi \in \mathbb{R} \; | \; |\xi| \in [N, 2N-1] \right\}, \;  N = 2^n, \; n \in \mathbb{N}_0$ and $I_{\leq 0} = [-1,1]$. The intervals $I_{\leq 0}$ and $(I_n)_{n \in \mathbb{N}_0}$ partition frequency space. We usually denote dyadic numbers by capital letters $N,K,J$ and their binary logarithm by $n,k,j$.

We write for the Fourier transform
\begin{equation*}
\mathcal{F}_xf(n) = \hat{f}(n) = \int_{\mathbb{T}} f(x) e^{-ix n} dx, \quad n \in \Z, \quad f \in C^{\infty}(\mathbb{T}),
\end{equation*}
which is extended on $L^2(\mathbb{T})$ in the usual way.\\
We also consider the Fourier transform in space and time
\begin{equation*}
\mathcal{F}_{t,x}f(\tau,n) = \tilde{f}(\tau,n) = \int_{\mathbb{R}} \int_{\mathbb{T}} f(t,x) e^{-it \tau} e^{-ix n} dx dt, \quad f: \mathbb{R} \times \mathbb{T} \rightarrow \mathbb{C}.
\end{equation*}

For the Littlewood-Paley projector onto frequencies of order $2^k, \; k \in \mathbb{N}_0$, we write $P_k:L^2(\mathbb{T}) \rightarrow L^2(\mathbb{T})$, that is $(P_k u)\hat{\,}(\xi) = 1_{I_k}(\xi) \hat{u}(\xi)$. The dispersion relation for the Airy equation is given by $\omega(\xi) = \xi^3$.\\
We set for $k \in \mathbb{N}_0$ and $j \in \mathbb{N}_0$
\begin{equation*}
\begin{split}
D_{k,j} = \{(\xi,\tau) \in \mathbb{Z} \times \mathbb{R} \, | \, \xi \in I_k, \, |\tau - \omega(\xi)| \sim 2^j \}, \\
D_{k,\leq j} = \{(\xi,\tau) \in \mathbb{Z} \times \mathbb{R} \, | \, \xi \in I_k, \, |\tau - \omega(\xi)| \lesssim 2^j \}.
\end{split}
\end{equation*}

Recall the definition of the $X^{s,b}$-spaces (cf. \cite[Section~2.6]{Tao2006}) for a dispersion relation $\omega$, which were introduced in \cite{Bourgain1993FourierTransformRestrictionPhenomenaI,Bourgain1993FourierTransformRestrictionPhenomenaII}:
\begin{equation*}
\begin{split}
X^{s,b}_{\omega} &= \{ f \in \mathcal{S}^{\prime}(\R \times \T) \; | \; \Vert f \Vert_{X^{s,b}_{\omega}} < \infty \}, \\
\Vert f \Vert_{X^{s,b}_{\omega}} &= \Vert \langle n \rangle^{s} \langle \tau - \omega(n) \rangle^b \mathcal{F}_{t,x} f(\tau,n) \Vert_{L^2_\tau \ell^2_n}.
\end{split}
\end{equation*}
In the following we omit the subscript $\omega$ when we refer to the Airy dispersion relation $\omega(\xi)=\xi^3$.\\
We define an $X^{s,b}$-type space for the Fourier transform of frequency-localized functions:
\begin{equation*}
\begin{split}
&X_k = \{ f: \mathbb{R} \times \mathbb{Z} \rightarrow \mathbb{C} \; | \\
 &\mathrm{supp}(f) \subseteq \mathbb{R} \times {I_k}, \Vert f \Vert_{X_k} = \sum_{j=0}^\infty 2^{j/2} \Vert \eta_j(\tau - \omega(n)) f(\tau,n) \Vert_{\ell^2_{n} L^2_{\tau}} < \infty \} .
\end{split}
\end{equation*}

We recall the following estimates from \cite[p.~270,~Eqs.~(2.3),~(2.4)]{IonescuKenigTataru2008}:
\begin{equation*}
\Vert \int_{\mathbb{R}} | f_k(\tau^\prime,n) | d\tau^\prime \Vert_{\ell^2_{n}} \lesssim \Vert f_k \Vert_{X_k}
\end{equation*}
\begin{equation}
\label{eq:XkEstimateII}
\begin{split}
&\sum_{j=l+1}^{\infty} 2^{j/2} \Vert \eta_j(\tau - \omega(n)) \cdot \int_{\mathbb{R}} | f_k(\tau^\prime,n) | \cdot 2^{-l} (1+ 2^{-l}|\tau - \tau^\prime |)^{-4} d\tau^\prime \Vert_{L^2_\tau \ell^2_n} \\
&\quad + 2^{l/2} \Vert \eta_{\leq l} (\tau - \omega(n)) \cdot \int_{\mathbb{R}} | f_k(\tau^\prime,n) | \cdot 2^{-l} (1+ 2^{-l}|\tau - \tau^\prime |)^{-4} d\tau^\prime \Vert_{L^2_\tau \ell^2_{n}}\\
&\lesssim \Vert f_k \Vert_{X_k}
\end{split}
\end{equation}
\eqref{eq:XkEstimateII} implies for a Schwartz-function $\gamma$ and $k, l \in \mathbb{N}, t_0 \in \mathbb{R}, f_k \in X_k$ the estimate
\begin{equation}
\label{eq:XkEstimateIII}
\Vert \mathcal{F}_{t,x}[\gamma(2^l(t-t_0)) \cdot \mathcal{F}_{t,x}^{-1}(f_k)] \Vert_{X_k} \lesssim_{\gamma} \Vert f_k \Vert_{X_k}.
\end{equation}

We define 
\begin{equation*}
E_k = \{ u_0:\T \rightarrow \C \; | \; P_k u_0 = u_0, \; \Vert u_0 \Vert_{E_k} = \Vert u_0 \Vert_{L^2} < \infty \},
\end{equation*}
and we set
\begin{equation*}
C_0 (\mathbb{R}, E_k) = \left\{ u_k \in C(\mathbb{R},E_k) \; | \; \mathrm{supp}(u_k) \subseteq [-4,4] \times \mathbb{R} \right\}.
\end{equation*}

We define the short-time Fourier restriction space $F^{\alpha}_k$ for frequencies $2^{k}$ adapted to the time scale $2^{-k \alpha}$ by
\begin{equation*}
F^{\alpha}_k = \{ u_k \in C_0(\mathbb{R}, E_k) \; | \Vert u_k \Vert_{F^{\alpha}_k} = \sup_{t_k \in \mathbb{R}} \Vert \mathcal{F}_{t,x}[u_k \eta_0(2^{\alpha k}(t-t_k))] \Vert_{X_k} < \infty \}.
\end{equation*}
Similarly, we set for the space, in which the nonlinearity is estimated,
\begin{equation*}
\begin{split}
&N^{\alpha}_k = \{ u_k \in C_0(\mathbb{R}, E_k) \; | \\ 
&\quad \Vert u_k \Vert_{N^{\alpha}_k} = \sup_{t_k \in \mathbb{R}} \Vert (\tau - \omega(n) + i2^{\alpha k})^{-1} \mathcal{F}_{t,x}[u_k \eta_0(2^{\alpha k}(t-t_k))] \Vert_{X_k} < \infty \}.
\end{split}
\end{equation*}
The localization in time is carried out in a usual way. Set
\begin{equation*}
F^{\alpha}_k(T) = \{ u_k \in C([-T,T], E_k) \; | \Vert u_k \Vert_{F^{\alpha}_k(T)} = \inf_{\tilde{u}_k = u_k \mathrm{in} [-T,T] } \Vert \tilde{u}_k \Vert_{F^{\alpha}_k} < \infty \}
\end{equation*}
and
\begin{equation*}
N^{\alpha}_k(T) = \{ u_k \in C([-T,T], E_k) \; | \Vert u_k \Vert_{N^{\alpha}_k(T)} = \inf_{\tilde{u}_k = u_k \mathrm{in} [-T,T]} \Vert \tilde{u}_k \Vert_{N^{\alpha}_k} < \infty \}.
\end{equation*}

We assemble the spaces $E^s$, $E^s(T)$, $F^{s,\alpha}(T)$ and $N^{s,\alpha}(T)$ by means of Littlewood-Paley theory. The energy space for the initial data is given by
\begin{equation*}
E^{s} = \{ \phi: \mathbb{T} \rightarrow \mathbb{C} \; | \Vert \phi \Vert_{E^{s}}^2 = \sum_{k \geq 0} 2^{2 ks} \Vert P_k \phi \Vert_{L^2}^2 < \infty \}.
\end{equation*}
For the solution, we consider
\begin{equation*}
E^{s}(T) = \{ u \in C([-T,T],H^{\infty}) \; | \Vert u \Vert_{E^{s}(T)}^2 = \sum_{k \geq 0} \sup_{t_k \in [-T,T]} 2^{2 ks} \Vert P_k u(t_k) \Vert_{L^2}^2 < \infty \}.
\end{equation*}
We define the short-time Fourier restriction space for the solution by
\begin{equation*}
F^{s,\alpha}(T) = \{ u \in C([-T,T],H^{\infty}) \; | \Vert u \Vert_{F^{s,\alpha}(T)}^2 = \sum_{k \geq 0} 2^{2 ks} \Vert P_k u \Vert_{F^{\alpha}_k(T)}^2 < \infty \}.
\end{equation*}
For the nonlinearity, we consider
\begin{equation*}
N^{s,\alpha}(T) = \{ u \in C([-T,T],H^{\infty}) \; |  \Vert u \Vert_{N^{s,\alpha}(T)}^2 = \sum_{k \geq 0} 2^{2 ks} \Vert P_k u \Vert_{N^{\alpha}_k(T)}^2 < \infty \}.
\end{equation*}

Throughout this article, we will work with the renormalized version \eqref{eq:renormalizedmKdV} of \eqref{eq:mKdV}. We use the following notation for the trilinear interaction in \eqref{eq:renormalizedmKdV}:
\begin{equation*}
\begin{split}
\mathcal{F}_x \mathfrak{N}(u,v,w)(n) &= i n \hat{u}(n) \hat{v}(-n) \hat{w}(n) + in \sum_{\substack{n_1+n_2+n_3=n,\\(n_1+n_2)(n_1+n_3)(n_2+n_3)\neq 0}} \hat{u}(n_1) \hat{v}(n_2) \hat{w}(n_3) \\
&=: \mathcal{F}_x \mathcal{R}(u,v,w)(n) + \mathcal{F}_x \mathcal{N}(u,v,w)(n).
\end{split}
\end{equation*}
We abbreviate the condition $(n_1+n_2)(n_1+n_3)(n_2+n_3) \neq 0$ in the sum for the non-resonant interaction $\mathcal{N}$ with $(*)$, and in Fourier variables we write
\begin{equation*}
(f_1 * f_2 * f_3)^{\mathfrak{N}}(n) = \sum_{\substack{n_1+n_2+n_3 = n,\\(*)}} f_1(n_1) f_2(n_2) f_3(n_3), \quad f_i : \Z \rightarrow \C.
\end{equation*}

We turn to the basic properties of the function spaces introduced above. The following lemma deals with the embedding $F^{s,\alpha}(T) \hookrightarrow C([0,T],H^s)$.
\begin{lemma}
\label{lem:embeddingShorttimeSpaces}
\begin{enumerate}
\item[(i)] Let $u \in F^\alpha_k$. Then, we find the estimate
\begin{equation*}
\Vert u \Vert_{L_t^{\infty} L_x^2} \lesssim \Vert u \Vert_{F_k^\alpha}
\end{equation*}
to hold uniformly in $k$.
\item[(ii)] Let $s \in \R$ and $T > 0$ and $u \in F^{s,\alpha}(T)$. Then, we find the following estimate to hold:
\begin{equation*}
\Vert u \Vert_{C([0,T],H^s_x)} \lesssim \Vert u \Vert_{F^{s,\alpha}(T)}.
\end{equation*}
\end{enumerate}
\end{lemma}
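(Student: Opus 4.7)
The plan is to derive both statements from the basic fact that the modulation-weighted space $X_k$ embeds into $L^\infty_t L^2_x$, which is the periodic analog of the standard $X^{s,b} \hookrightarrow L^\infty_t L^2_x$ embedding for $b > 1/2$. The factor $2^{j/2}$ in the definition of $\Vert \cdot \Vert_{X_k}$ plays precisely the role of such a $b > 1/2$ weight after a Cauchy--Schwarz step in the time variable. Once this auxiliary embedding is in hand, part (i) follows by evaluating a short-time cutoff at its center, and part (ii) follows from (i) by Littlewood--Paley square-summation.

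For the auxiliary embedding, I would show that if $f_k \in X_k$ then $\Vert \mathcal{F}_{t,x}^{-1} f_k \Vert_{L^\infty_t L^2_x} \lesssim \Vert f_k \Vert_{X_k}$. To see this, decompose $f_k = \sum_{j \geq 0} \eta_j(\tau - \omega(n)) f_k$ dyadically in modulation; for each piece, apply Fourier inversion in $\tau$ followed by the triangle inequality and Cauchy--Schwarz in $\tau$ on a set of measure $\sim 2^j$ to obtain
\begin{equation*}
\Vert \mathcal{F}_{t,x}^{-1}[\eta_j(\tau - \omega(n)) f_k] \Vert_{L^\infty_t L^2_x} \lesssim \Vert \eta_j(\tau - \omega(n)) f_k \Vert_{L^1_\tau \ell^2_n} \lesssim 2^{j/2} \Vert \eta_j(\tau - \omega(n)) f_k \Vert_{L^2_\tau \ell^2_n},
\end{equation*}
and sum in $j$ using the definition of $\Vert \cdot \Vert_{X_k}$.

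For part (i), fix $t \in \R$ and choose $t_k = t$. Since $\eta_0(0) = 1$, we have $u(t,\cdot) = [\eta_0(2^{\alpha k}(\cdot - t)) u](t, \cdot)$, so by the auxiliary embedding applied to $\mathcal{F}_{t,x}[\eta_0(2^{\alpha k}(\cdot - t)) u]$,
\begin{equation*}
\Vert u(t) \Vert_{L^2_x} \leq \Vert \eta_0(2^{\alpha k}(\cdot - t)) u \Vert_{L^\infty_t L^2_x} \lesssim \Vert \mathcal{F}_{t,x}[\eta_0(2^{\alpha k}(\cdot - t)) u] \Vert_{X_k} \leq \Vert u \Vert_{F^\alpha_k},
\end{equation*}
and taking the supremum over $t$ yields (i). For part (ii), extend each $P_k u$ to $\widetilde{P_k u} \in F^\alpha_k$ that agrees with $P_k u$ on $[-T,T]$ and nearly realizes the infimum in the definition of $\Vert P_k u \Vert_{F^\alpha_k(T)}$. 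Applying (i) to $\widetilde{P_k u}$ gives $\sup_{t \in [-T,T]} \Vert P_k u(t) \Vert_{L^2_x} \lesssim \Vert P_k u \Vert_{F^\alpha_k(T)}$ uniformly in $k$. Multiplying by $2^{2ks}$, summing in $k$, and using Plancherel in $x$ yields
\begin{equation*}
\sup_{t \in [-T,T]} \Vert u(t) \Vert_{H^s_x}^2 \leq \sum_{k \geq 0} 2^{2ks} \sup_{t \in [-T,T]} \Vert P_k u(t) \Vert_{L^2_x}^2 \lesssim \Vert u \Vert_{F^{s,\alpha}(T)}^2.
\end{equation*}
Continuity in $t$ with values in $H^s$ is inherited from the ambient class $C([-T,T], H^\infty)$ in the definition of $F^{s,\alpha}(T)$, combined with uniform summability of the Littlewood--Paley pieces provided by the above bound.

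No serious obstacle is expected: the only mildly delicate point is justifying that the bound obtained on frequency-localized pieces really gives the claimed uniform-in-$k$ estimate for $\Vert u \Vert_{L^\infty_t L^2_x}$, which is handled by the choice $t_k = t$ that trivializes the evaluation of the cutoff. The argument closely parallels the corresponding embedding lemma on the real line in \cite{IonescuKenigTataru2008}, and nothing periodic-specific enters beyond replacing $L^2_n$ by $\ell^2_n$.
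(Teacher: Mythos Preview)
Your proof is correct and supplies exactly the details the paper omits: the paper simply refers the reader to \cite[Lemma~3.1]{IonescuKenigTataru2008} and \cite[Lemma~3.2]{GuoOh2018}, and your argument is the standard one found in those references. One small notational point: in the auxiliary embedding, the intermediate bound should more naturally be stated as $\Vert \cdot \Vert_{\ell^2_n L^1_\tau}$ rather than $\Vert \cdot \Vert_{L^1_\tau \ell^2_n}$, since the Cauchy--Schwarz step in $\tau$ is applied for each fixed $n$ (where the $\tau$-support has measure $\sim 2^j$); your ordering still yields a valid inequality by Minkowski, but the route through $\ell^2_n L^1_\tau$ is the direct one and is precisely the estimate $\Vert \int_{\mathbb{R}} |f_k(\tau',n)|\, d\tau' \Vert_{\ell^2_n} \lesssim \Vert f_k \Vert_{X_k}$ recorded in the paper just above \eqref{eq:XkEstimateII}.
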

\begin{proof}
See \cite[Lemma~3.1.,~p.~274]{IonescuKenigTataru2008} for the real line case and \cite[Lemma~3.2.,~p.~1668]{GuoOh2018} for the periodic case.
\end{proof}
For the large data theory, we have to define the following generalizations in terms of regularity $b \in \R$ in the modulation variable to the $X_k$-spaces
\begin{equation*}
\begin{split}
&X^b_k = \{ f: \mathbb{R} \times \mathbb{Z} \rightarrow \mathbb{C} \; | \\
 &\quad \mathrm{supp}(f) \subseteq \mathbb{R} \times {I_k}, \Vert f \Vert_{X^b_k} = \sum_{j=0}^\infty 2^{bj} \Vert \eta_j(\tau - \omega(n)) f(\tau,n) \Vert_{\ell^2_{n} L^2_{\tau}} < \infty \}.
\end{split}
\end{equation*}

The short-time spaces $F^{b,\alpha}_k$, $F^{b,s,\alpha}(T)$ and $N^{b,\alpha}_k$, $N^{b,s,\alpha}(T)$ are defined following along the above lines with $X_k$ replaced by $X^b_k$.\\
Indeed, in a similar spirit to the treatment of $X^{s,b}_T$-spaces, we can trade regularity in the modulation variable for a small power of $T$.
\begin{lemma}{\cite[Lemma~3.4.,~p.~1670]{GuoOh2018}}
\label{lem:tradingModulationRegularity}
Let $T>0$ and $b<1/2$. Then, for any function $u$ with temporal support in $[-T,T]$, we find the following estimate to hold:
\begin{equation*}
\Vert P_k u \Vert_{F_k^{b,\alpha}} \lesssim T^{1/2-b-} \Vert P_k u \Vert_{F_k^\alpha}.
\end{equation*}
\end{lemma}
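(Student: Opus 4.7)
The plan is to establish the pointwise-in-$t_k$ bound
\[
\|\mathcal{F}_{t,x}[P_k u \cdot \eta_0(2^{\alpha k}(t-t_k))]\|_{X_k^b} \lesssim T^{1/2-b-}\|P_k u\|_{F_k^\alpha}
\]
(uniformly in $t_k \in \R$) and then pass to the supremum over $t_k$, which produces $\|P_k u\|_{F_k^{b,\alpha}}$ on the left. Set $v := P_k u \cdot \eta_0(2^{\alpha k}(t-t_k))$; since $u$ has temporal support in $[-T,T]$ and $|\eta_0| \le 1$ pointwise, so does $v$. I may assume $T \le 1$: for $T \ge 1$ the claim follows at once from the trivial inequality $\|\cdot\|_{X_k^b} \le \|\cdot\|_{X_k}$, valid since $b \le 1/2$. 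Choose $j_0 \in \mathbb{N}_0$ with $2^{j_0} \sim T^{-1}$ and abbreviate $a_j := \|\eta_j(\tau-\omega(n))\tilde v(\tau,n)\|_{\ell^2_n L^2_\tau}$. I split the modulation sum at $j_0$:
\[
\|\tilde v\|_{X_k^b} = \sum_{j \le j_0} 2^{bj} a_j + \sum_{j > j_0} 2^{bj} a_j.
\]

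For the high-modulation block the fact that $b < 1/2$ furnishes a geometric gain once one factors $2^{bj} = 2^{(b-1/2)j}\cdot 2^{j/2}$ and pulls the smallest exponential out of the sum:
\[
\sum_{j > j_0} 2^{bj} a_j \le 2^{(b-1/2)j_0}\sum_{j > j_0} 2^{j/2} a_j \lesssim T^{1/2-b}\|\tilde v\|_{X_k} \le T^{1/2-b}\|P_k u\|_{F_k^\alpha}.
\]

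For the low-modulation block I apply Cauchy--Schwarz in $j$ followed by Plancherel and the temporal support of $v$:
\[
\sum_{j \le j_0} 2^{bj} a_j \le \Big(\sum_{j \le j_0} 2^{2bj}\Big)^{1/2}\Big(\sum_{j \le j_0} a_j^2\Big)^{1/2}.
\]
The first factor is $\lesssim T^{-b}$ when $b > 0$ (with a harmless $\sqrt{\log(1/T)}$ at $b = 0$, absorbed into the ``$-$'' in the target exponent) and $\lesssim 1$ when $b \le 0$. By Plancherel the second factor is comparable to $\|v\|_{L^2_{t,x}}$; temporal support in $[-T,T]$ and H\"older in $t$ give $\|v\|_{L^2_{t,x}} \le T^{1/2}\|v\|_{L^\infty_t L^2_x}$, and from $|\eta_0| \le 1$ and Lemma~\ref{lem:embeddingShorttimeSpaces}(i) one gets $\|v\|_{L^\infty_t L^2_x} \le \|P_k u\|_{L^\infty_t L^2_x} \lesssim \|P_k u\|_{F_k^\alpha}$. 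Combining the two blocks and taking the supremum over $t_k$ completes the proof. No serious obstacle is anticipated: this is essentially the classical $X^{s,b}$-trading argument adapted to the $\ell^1_j$-Besov summation built into the $X_k$ norms, and the only minor nuisance is the logarithmic endpoint loss at $b = 0$ that motivates the ``$-$'' in the stated exponent.
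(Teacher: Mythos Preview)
The paper gives no proof of its own here; the lemma is quoted from \cite{GuoOh2018}. Your argument is the standard one and is correct for $0\le b<1/2$, which is the only range the paper ever invokes (see the passage from \eqref{eq:shorttimeFrequencyLocalizedEstimate} to \eqref{eq:shorttimeTrilinearEstimate}, where $b=1/2-$ is used to manufacture the factor $T^\theta$).

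There is a small gap for $b<0$: your Cauchy--Schwarz step on the low-modulation block yields only $T^{1/2}$, which is \emph{weaker} than the claimed $T^{1/2-b-}$ when $T<1$ and $b<0$, so ``combining the two blocks'' does not actually give the stated conclusion in that range. A fix for $-1/2<b<0$ is to replace the Cauchy--Schwarz in $j$ by the pointwise bound
\[
a_j \;\lesssim\; 2^{j/2}\,T^{1/2}\,\|v\|_{L^2_{t,x}} \;\lesssim\; 2^{j/2}\,T\,\|P_k u\|_{F_k^\alpha},
\]
obtained from $\|\tilde v(\cdot,n)\|_{L^\infty_\tau}\le (2T)^{1/2}\|\hat v(\cdot,n)\|_{L^2_t}$ (temporal support) together with $\|\eta_j\|_{L^2_\tau}\lesssim 2^{j/2}$; summing the geometric series $\sum_{j\le j_0}2^{(b+1/2)j}\sim T^{-(b+1/2)}$ then gives exactly $T^{1/2-b}$. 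For $b\le -1/2$ the estimate as stated actually fails (take $u(t,x)=\chi(t/T)e^{ix}$), so the hypothesis should really read $-1/2<b<1/2$; this is immaterial for the applications in the paper.
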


Below, we have to consider the action of sharp time cutoffs in the $X_k$-spaces. Recall from the usual theory for Fourier restriction spaces that multiplication with a sharp cutoff in time is not bounded. However, we have the following lemma.
\begin{lemma}{\cite[Lemma~3.5.,~p.~1671]{GuoOh2018}}
\label{lem:sharpTimeCutoffAlmostBounded}
Let $k \in \Z$. Then, for any interval $I=[t_1,t_2] \subseteq \R$, we find the following estimate to hold:
\begin{equation*}
\sup_{j \geq 0} 2^{j/2} \Vert \eta_j(\tau-\omega(n)) \mathcal{F}_{t,x}[1_{I}(t) P_k u] \Vert_{L_\tau^2 \ell^2_n} \lesssim \Vert \mathcal{F}_{t,x} (P_k u) \Vert_{X_k}
\end{equation*}
with implicit constant independent of $k$ and $I$.
\end{lemma}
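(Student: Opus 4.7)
My plan is to transfer the sharp cutoff to the Fourier side and exploit dyadic orthogonality in the modulation variable. Setting $\tilde u := \mathcal{F}_{t,x}(P_k u)$ and $K(\tau) := \mathcal{F}_t[1_I](\tau) = (e^{-i\tau t_1} - e^{-i\tau t_2})/(i\tau)$, one has
$$\mathcal{F}_{t,x}[1_I(t) P_k u](\tau, n) = (K \ast_\tau \tilde u)(\tau, n), \qquad |K(\tau)| \lesssim \min\bigl(|I|, |\tau|^{-1}\bigr),$$
with the $|\tau|^{-1}$ decay being crucially independent of $|I|$. I would then decompose $\tilde u = \sum_{j_1 \geq 0} \tilde u_{j_1}$ with $\tilde u_{j_1} := \eta_{j_1}(\tau - \omega(n)) \tilde u$, and estimate each $2^{j/2} \|\eta_j(\tau - \omega(n)) (K \ast_\tau \tilde u_{j_1})\|_{L^2_\tau \ell^2_n}$ case by case in the relation between $j$ and $j_1$.

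Three regimes arise. In the far off-diagonal regime $j_1 \geq j + C$, the support constraints force $|\tau - \tau'| \sim 2^{j_1}$, whence $|K(\tau - \tau')| \lesssim 2^{-j_1}$; applying Cauchy--Schwarz in $\tau'$ over the interval of length $\sim 2^{j_1}$ and then $L^2_\tau$ on the support of $\eta_j$ (measure $\sim 2^j$) produces
$$2^{j/2} \bigl\| \eta_j (K \ast_\tau \tilde u_{j_1}) \bigr\|_{L^2_\tau \ell^2_n} \lesssim 2^{j - j_1} \cdot 2^{j_1/2} \|\tilde u_{j_1}\|_{L^2_\tau \ell^2_n},$$
whose sum over $j_1 \geq j$ is a geometric series controlled by $\|\tilde u\|_{X_k}$. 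The symmetric regime $j_1 \leq j - C$ is handled analogously using $|K(\tau - \tau')| \lesssim 2^{-j}$, yielding a bound $\lesssim 2^{j_1/2} \|\tilde u_{j_1}\|_{L^2_\tau \ell^2_n}$; summing over $j_1 < j$ trivially fits inside $\|\tilde u\|_{X_k}$. On the diagonal $|j - j_1| \leq C$ the kernel decay breaks down, and I would instead invoke Plancherel in $\tau$ (equivalent to the trivial $L^2$-bound on multiplication by $1_I$) to get $\|K \ast_\tau f\|_{L^2_\tau} \leq \|f\|_{L^2_\tau}$ for each fixed $n$; combined with $2^{j/2} \sim 2^{j_1/2}$ this again yields $\lesssim 2^{j_1/2} \|\tilde u_{j_1}\|_{L^2_\tau \ell^2_n}$, and there are only $O(1)$ such terms. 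Summing and taking $\sup_j$ across the three regimes produces the desired inequality, with constants independent of both $k$ (the spatial scale never enters the argument) and $I$ (the kernel bounds used are $I$-independent).

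I expect no genuine obstacle; the mechanism is that of a Hilbert-transform-type estimate adapted to the dyadic modulation decomposition. The conceptual point behind the lemma, and the reason the stated norm on the left is weaker than the full $X_k$ norm, is that the first-order decay of $K(\tau)$ yields only summable (not square-summable) off-diagonal modulation weights: upgrading $\sup_j$ to $\sum_j$ on the left would require an extra half-power of decay that the kernel simply does not provide. This is precisely the failure of sharp cutoffs to be bounded on the full $X_k$ norm, and the $\sup_j$-form in the statement is the best one can hope for.
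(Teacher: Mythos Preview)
Your argument is correct and is the standard proof of this fact; the paper itself does not supply a proof but simply cites \cite[Lemma~3.5]{GuoOh2018}, where the same dyadic-in-modulation decomposition combined with the kernel bound $|\widehat{1_I}(\tau)| \lesssim \min(|I|,|\tau|^{-1})$ and Plancherel on the diagonal is used. There is nothing to add.
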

\section{Proof of Theorems \ref{thm:localExistence} and \ref{thm:nonExistence}}
\label{section:Proof}
The proof of Theorem \ref{thm:localExistence} is divided up into two parts: Firstly, we establish a priori estimates on smooth solutions. Next, a compactness argument is used to construct the solution mapping.
\begin{lemma}
\label{lem:aPrioriEstimateSmoothSolutions}
Let $u_0 \in H^{\infty}(\T)$ and $s>0$. There is a function $T=T(s,\Vert u_0 \Vert_{H^s})$ such that we find the following estimate for the unique smooth solution to \eqref{eq:renormalizedmKdV} to hold:
\begin{equation*}
\sup_{t \in [-T,T]} \Vert u(t) \Vert_{H^s(\T)} \lesssim \Vert u_0 \Vert_{H^s(\T)}.
\end{equation*}
\end{lemma}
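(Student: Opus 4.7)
The plan is to combine the three a priori estimates displayed in the introduction (linear estimate, short-time trilinear estimate, energy estimate) with a continuity/bootstrap argument on the interval-length parameter $T$. Since $u_0 \in H^{\infty}(\T)$, classical well-posedness of \eqref{eq:renormalizedmKdV} already provides a unique smooth solution on some time interval, so every norm appearing below is a priori finite; what must be shown is that the lifespan $T$ and the resulting bound depend only on $\Vert u_0\Vert_{H^s}$, not on higher norms of $u_0$.

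Set $Y(T) := \Vert u\Vert_{F^{s,\alpha}(T)}^2$. Squaring the linear estimate, substituting the trilinear estimate, and then substituting the energy estimate, one obtains for $T \in (0,1]$
\begin{equation*}
Y(T) \;\leq\; C_0 \Vert u_0\Vert_{H^s}^2 \;+\; C_1 T^{\theta} Y(T)^3,
\end{equation*}
with $\theta > 0$ as in the displayed estimates and absolute constants $C_0, C_1$. Writing $A := \Vert u_0\Vert_{H^s}^2$, I would aim to show that the set $\{T \in [0,1] : Y(T) \leq 2 C_0 A\}$ contains an interval $[0, T^*(A)]$ with $T^*(A) \sim A^{-2/\theta}$. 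Indeed, choose $T^*$ small enough that $C_1 (T^*)^{\theta} (2 C_0 A)^3 \leq \tfrac{1}{2} C_0 A$; then, on the bootstrap set, the inequality above strictly improves the hypothesis to $Y(T) \leq \tfrac{3}{2} C_0 A$, so by continuity of $T \mapsto Y(T)$ the bootstrap set is both open and closed in $[0, T^*]$, hence equals $[0, T^*]$. Lemma \ref{lem:embeddingShorttimeSpaces} then yields
\begin{equation*}
\sup_{t \in [-T^*, T^*]} \Vert u(t)\Vert_{H^s} \;\lesssim\; \Vert u\Vert_{F^{s,\alpha}(T^*)} \;=\; Y(T^*)^{1/2} \;\lesssim\; \Vert u_0\Vert_{H^s},
\end{equation*}
which is exactly the claim.

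The main obstacle is justifying the two ingredients the bootstrap rests upon: continuity of $T \mapsto Y(T)$ and the initial bound $\limsup_{T \downarrow 0} Y(T) \lesssim \Vert u_0\Vert_{H^s}^2$ (so that the bootstrap set is non-empty and the bootstrap argument has a base case). Both are more delicate than for classical $X^{s,b}_T$-spaces because the frequency-dependent localization scale $2^{-\alpha k}$ in $F^{s,\alpha}(T)$ interacts poorly with small $T$ at high frequencies. I expect to handle this by invoking Lemma \ref{lem:tradingModulationRegularity} to trade modulation regularity $b < 1/2$ for the small factor $T^{1/2-b-}$, together with the rapid Fourier decay of a smooth solution: contributions of frequencies $\gtrsim K$ to $Y(T)$ are then controlled by $T^{1/2-b-}$ times a seminorm that is tiny for $K$ large (using smoothness), while the low-frequency part $P_{\leq K} u$ is continuous in $T$ and converges to a quantity bounded by $\Vert u_0\Vert_{H^s}^2$ as $T \downarrow 0$. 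This technical continuity step is the one to watch carefully; the remainder is book-keeping and the extraction of $T^*$ in terms of $\Vert u_0\Vert_{H^s}$.
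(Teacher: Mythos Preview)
Your bootstrap scheme is the right idea, but it has one genuine gap and one avoidable complication.

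The gap: the three-line display you quote from the introduction carries a footnote pointing to Section~\ref{section:EnergyEstimates} for the ``actually more involved energy estimate''. For $0<s\leq 1/4$ what is actually proved (Proposition~\ref{prop:energyPropagation} and \eqref{eq:completePropagation}) is
\[
\Vert u\Vert_{E^s(T)}^2 \lesssim \Vert u_0\Vert_{H^s}^2 + T^\theta M^{c(s)} \Vert u\Vert_{F^{s,1}(T)}^4 + M^{-d(s)} \Vert u\Vert_{F^{s,1}(T)}^4 + T^\theta \Vert u\Vert_{F^{s,1}(T)}^6,
\]
with an auxiliary dyadic parameter $M$. The term $M^{-d(s)}\Vert u\Vert_{F}^4$ carries \emph{no} factor of $T^\theta$, so it cannot be absorbed by taking $T$ small alone; one must first choose $M=M(\Vert u_0\Vert_{H^s})$ large enough that $M^{-d(s)}(2C_0A)^2$ is small, and only afterwards choose $T$ small. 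Your derived inequality $Y(T)\leq C_0A+C_1T^\theta Y(T)^3$ is therefore not available across the full range $s>0$, and the argument as written covers only $s>1/4$.

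The complication: the paper bootstraps $X(T)=\Vert u\Vert_{E^s(T)}+\Vert\mathfrak N(u)\Vert_{N^{s,1}(T)}$ rather than $Y(T)=\Vert u\Vert_{F^{s,1}(T)}^2$, precisely in order to avoid the continuity issue you flag. Lemma~\ref{lem:limitProperties} supplies continuity of $T\mapsto\Vert u\Vert_{E^s(T)}$ and $T\mapsto\Vert\mathfrak N(u)\Vert_{N^{s,1}(T)}$ with limits $\Vert u_0\Vert_{H^s}$ and $0$ as $T\downarrow 0$, so $X$ is continuous with $X(0^+)=\Vert u_0\Vert_{H^s}$ --- exactly the base case the bootstrap needs. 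No analogous statement is recorded for $F^{s,\alpha}(T)$, and your proposed workaround via Lemma~\ref{lem:tradingModulationRegularity} plus smoothness is both more delicate (that lemma requires temporal support in $[-T,T]$, while $F^{s,\alpha}(T)$ involves an infimum over extensions) and unnecessary: once $X(T)\leq 2R$ is secured, the linear estimate already controls $\Vert u\Vert_{F^{s,1}(T)}\lesssim X(T)$. The clean fix is to run the bootstrap on $X(T)$ and incorporate the two-step choice of $M$ and then $T$.
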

For the proof, the $F^{s,1}(T)$-norm of the solution is bootstrapped. By virtue of Lemma \ref{lem:embeddingShorttimeSpaces}, this is enough to prove Lemma \ref{lem:aPrioriEstimateSmoothSolutions}.
Propagation of the $F^s$-norm is achieved by the following linear estimate:
\begin{lemma}
\label{lem:linearEstimatesShorttimeSpaces}
Let $\alpha >0$ and let $u$ be a smooth solution to the inhomogeneous equation
\begin{equation*}
\partial_t u + \partial_{xxx} u = v \; \mathrm{ on } \; [-T,T] \times \T
\end{equation*}
with $v \in C([-T,T],H^\infty(\T))$. Then, we find the following estimate to hold:
\begin{equation*}
\Vert u \Vert_{F^{s,\alpha}(T)} \lesssim \Vert u \Vert_{E^s(T)} + \Vert v \Vert_{N^{s,\alpha}(T)}.
\end{equation*}
\end{lemma}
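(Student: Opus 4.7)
The plan is to reduce the statement to a dyadic estimate via Littlewood-Paley decomposition and then, on a single dyadic shell, combine the Duhamel representation on a window of length $2^{-\alpha k}$ with the multiplier bounds \eqref{eq:XkEstimateII}--\eqref{eq:XkEstimateIII} for the $X_k$-spaces; the argument is the periodic analog of the real-line construction of Ionescu, Kenig and Tataru.

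Applying $P_k$ to the equation, it suffices to show
\begin{equation*}
\Vert P_k u \Vert_{F^\alpha_k(T)} \lesssim \sup_{t_k \in [-T,T]} \Vert P_k u(t_k) \Vert_{L^2} + \Vert P_k v \Vert_{N^\alpha_k(T)}
\end{equation*}
uniformly in $k \geq 0$, since squaring, multiplying by $2^{2ks}$ and summing in $k$ yield the desired bound. Fix $k$ and, using the infimum definition of $N^\alpha_k(T)$, choose an extension $\tilde v \in N^\alpha_k$ of $P_k v$ with $\Vert \tilde v \Vert_{N^\alpha_k} \leq 2 \Vert P_k v \Vert_{N^\alpha_k(T)}$. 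For a centre $t_k \in [-T,T]$ the Duhamel formula reads
\begin{equation*}
P_k u(t) = e^{-(t-t_k) \partial_x^3} P_k u(t_k) + \int_{t_k}^{t} e^{-(t-s) \partial_x^3} \tilde v(s)\, ds
\end{equation*}
for $|t-t_k| \lesssim 2^{-\alpha k}$. I would multiply by $\eta_0(2^{\alpha k}(t-t_k))$, take the space-time Fourier transform, and estimate the two resulting pieces separately in $X_k$.

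The free piece is handled by a direct computation: the time cutoff concentrates the Fourier transform on the characteristic surface $\tau = \omega(n)$ into a $\tau$-neighbourhood of width $\sim 2^{\alpha k}$, and the dyadic sum defining the $X_k$-norm telescopes to $O(\Vert P_k u(t_k) \Vert_{L^2})$ by Schwartz decay of $\hat{\eta}_0$. For the Duhamel piece, one inserts an inner cutoff $\chi(s) = \eta_0(2^{\alpha k - 1}(s - t_k))$ that equals one on the support of the outer cutoff, so that the integrand is unchanged; on the Fourier side the Duhamel integration produces the multiplier $(i(\tau - \omega(n)))^{-1}$, which after regularization by the outer cutoff of width $2^{\alpha k}$ is pointwise dominated by $(|\tau - \omega(n)| + 2^{\alpha k})^{-1}$. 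This matches the weight $(\tau - \omega(n) + i 2^{\alpha k})^{-1}$ in the definition of $N^\alpha_k$, and invoking \eqref{eq:XkEstimateII}--\eqref{eq:XkEstimateIII} together with Lemma \ref{lem:sharpTimeCutoffAlmostBounded} yields
\begin{equation*}
\Vert \mathcal{F}_{t,x}[\eta_0(2^{\alpha k}(t-t_k)) \int_{t_k}^t e^{-(t-s) \partial_x^3} (\chi \tilde v)(s)\, ds] \Vert_{X_k} \lesssim \Vert \chi \tilde v \Vert_{N^\alpha_k} \lesssim \Vert P_k v \Vert_{N^\alpha_k(T)}.
\end{equation*}

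The main technical obstacle is the sharp indicator $\mathbf{1}_{[t_k, t]}$ inherent to Duhamel: its Fourier transform has the non-integrable pole $(i(\tau - \omega(n)))^{-1}$, so $\mathbf{1}_{[t_k,t]} \cdot \tilde v$ a priori does not live in $X_k$. Lemma \ref{lem:sharpTimeCutoffAlmostBounded} is tailored to precisely this obstruction, paying only the factor $2^{j/2}$ per modulation dyadic block $D_{k,j}$, which is exactly the weight encoded in the $X_k$-norm. Taking the supremum over $t_k \in [-T,T]$ then bounds the first term by the $k$th summand of $\Vert u \Vert_{E^s(T)}$, and Littlewood-Paley summation in $k$ concludes the proof.
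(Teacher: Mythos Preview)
Your proposal is correct and is precisely the Ionescu--Kenig--Tataru argument that the paper defers to; the paper's own proof is the single sentence ``The original proof from \cite{IonescuKenigTataru2008} for the real line carries over.'' One small correction: Lemma~\ref{lem:sharpTimeCutoffAlmostBounded} is not the tool that resolves the sharp indicator $\mathbf{1}_{[t_k,t]}$ in the Duhamel term. In the IKT computation one takes the time Fourier transform of $\eta_0(2^{\alpha k}(t-t_k))\int_{t_k}^t e^{-(t-s)\partial_x^3}(\chi\tilde v)(s)\,ds$ directly and obtains, for each $n$, an expression of the form
\[
c\,2^{-\alpha k}\int \frac{\widehat{\chi\tilde v}(\tau',n)\bigl[\hat\eta_0(2^{-\alpha k}(\tau-\tau'))-\hat\eta_0(2^{-\alpha k}(\tau-\omega(n)))\bigr]}{\tau'-\omega(n)}\,d\tau',
\]
where the difference in the numerator cancels the pole at $\tau'=\omega(n)$; splitting into $|\tau'-\omega(n)|\le 2^{\alpha k}$ (Taylor expand the numerator) and $|\tau'-\omega(n)|>2^{\alpha k}$ then reduces matters to \eqref{eq:XkEstimateII} alone. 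Lemma~\ref{lem:sharpTimeCutoffAlmostBounded} is used in the paper only in the energy estimates of Section~\ref{section:EnergyEstimates}, where a genuine sharp cutoff $1_{[0,T]}$ interacts with the time-localization partition.
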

\begin{proof}
The original proof from \cite{IonescuKenigTataru2008} for the real line carries over.
\end{proof}
Together with the nonlinear estimate from Proposition \ref{prop:nonlinearEstimate} and the energy estimate from Proposition \ref{prop:energyPropagation}, there is $\theta > 0$ and $c(s), d(s) > 0$ so that the following estimates hold true for any $M \in 2^{\N}$:
\begin{equation}
\label{eq:completePropagation}
\left\{\begin{array}{cl}
\Vert u \Vert_{F^{s,1}(T)} &\lesssim \Vert u \Vert_{E^{s}(T)} + \Vert \mathfrak{N}(u) \Vert_{N^{s,1}(T)} \\
\Vert \mathfrak{N}(u) \Vert_{N^{s,1}(T)} &\lesssim T^\theta \Vert u \Vert_{F^{s,1}(T)}^3 \\
\Vert u \Vert_{E^s(T)}^2 &\lesssim \Vert u_0 \Vert_{H^s}^2 + T^\theta M^{c(s)} \Vert u \Vert^4_{F^{s,1}(T)} \\
 &\quad + M^{-d(s)} \Vert u \Vert^4_{F^{s,1}(T)} + T^\theta \Vert u \Vert_{F^{s,1}(T)}^6
\end{array} \right.
\end{equation}
To carry out the continuity argument, we also need the limit properties of the involved norms:
\begin{lemma}
\label{lem:limitProperties}
Let $u,v \in C([-T,T],H^\infty(\T))$. We find the mappings $T \mapsto \Vert v \Vert_{N^s(T^)} $, $T \mapsto \Vert u \Vert_{E^s(T)}$ to be continuous, and we have
\begin{align*}
\lim_{T \rightarrow 0} \Vert v \Vert_{N^{s,\alpha}(T)} &= 0, \\
\lim_{T \rightarrow 0} \Vert u \Vert_{E^s(T)} &= \Vert u_0 \Vert_{H^s}.
\end{align*}
\end{lemma}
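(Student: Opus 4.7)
The plan is to treat the two norms separately: the $E^s(T)$ statement yields to dominated convergence on Littlewood-Paley pieces, while the $N^{s,\alpha}(T)$ claim requires an explicit extension that exposes a factor of $T^{1/2}$.

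\textbf{For $E^s(T)$.} I would decompose $\Vert u \Vert_{E^s(T)}^2 = \sum_k 2^{2ks} g_k(T)$ with $g_k(T) = \sup_{|t| \leq T} \Vert P_k u(t) \Vert_{L^2}^2$. For $u \in C([-T_0,T_0], H^\infty)$, each $g_k$ is continuous and non-decreasing in $T$ with $g_k(0) = \Vert P_k u_0 \Vert_{L^2}^2$, while the dominating series $2^{2ks} \Vert P_k u \Vert_{L^\infty_t L^2}^2$ is summable by smoothness. Dominated convergence then yields continuity of $T \mapsto \Vert u \Vert_{E^s(T)}$ on $[0, T_0]$ and the limit $\Vert u_0 \Vert_{H^s}$ at $T = 0$.

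\textbf{For the $N^{s,\alpha}(T) \to 0$ limit.} For each $k$, I would choose the extension $\tilde v_k(t) = \eta_0(t/T) P_k v(t)$, which equals $P_k v$ on $[-T, T]$ and has temporal support of length $O(T)$. By Plancherel, $\Vert \tilde v_k \eta_0(2^{\alpha k}(t - t_k)) \Vert_{L^2_{t,x}} \lesssim \min(T, 2^{-\alpha k})^{1/2} \Vert P_k v \Vert_{L^\infty_t L^2}$ uniformly in $t_k$. Dyadically decomposing in the modulation variable and using the pointwise bound $|\eta_j(\tau - \omega)(\tau - \omega + i 2^{\alpha k})^{-1}| \lesssim \min(2^{-j}, 2^{-\alpha k})$, the sum against the $X_k$-weight $2^{j/2}$ contributes an extra factor $2^{-\alpha k/2}$. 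Combining,
\[
\Vert P_k v \Vert_{N^\alpha_k(T)} \lesssim 2^{-\alpha k/2} T^{1/2} \Vert P_k v \Vert_{L^\infty_t L^2},
\]
so $\Vert v \Vert_{N^{s,\alpha}(T)}^2 \lesssim T \sum_k 2^{(2s-\alpha)k} \Vert P_k v \Vert_{L^\infty_t L^2}^2$, which is $O(T)$ for smooth $v$ and vanishes as $T \to 0$.

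\textbf{For continuity on $(0,T_0]$ and the main obstacle.} Monotonicity of $T \mapsto \Vert v \Vert_{N^{s,\alpha}(T)}$ is immediate from the definition (any extension across $[-T_2, T_2]$ restricts to one across $[-T_1, T_1]$), giving lower semi-continuity. For upper semi-continuity at $T_0$ with $T_n \downarrow T_0$, I would take near-optimal extensions $\tilde v_k$ for $T_0$ and patch them to competitors $\hat v_k = \chi_n \tilde v_k + (1 - \chi_n) P_k v$ using a smooth cutoff $\chi_n$ equal to $1$ on $[-T_0, T_0]$ and vanishing outside $[-T_n, T_n]$; the norm increment is controlled by reapplying the previous paragraph to the narrow band $[T_0, T_n] \cup [-T_n, -T_0]$, yielding $O((T_n - T_0)^{1/2})$ in each frequency, and dominated convergence in $k$ closes the argument. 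The main technical point is the second step: the short-time window $\eta_0(2^{\alpha k}(t - t_k))$ of length $2^{-\alpha k}$ can be much shorter than $T$ at high frequencies, so the bookkeeping between the regimes $2^{-\alpha k} \gtrless T$ — combining the $\min(T, 2^{-\alpha k})^{1/2}$ factor with the $2^{-\alpha k/2}$ resolvent gain to extract a clean $T^{1/2}$ decay summable in $k$ — is what has to be done carefully.
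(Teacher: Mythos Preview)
The paper does not give its own proof of this lemma; it defers to \cite{IonescuKenigTataru2008} and \cite{GuoOh2018}. Your treatment of $E^s(T)$ and of the limit $\Vert v \Vert_{N^{s,\alpha}(T)} \to 0$ is correct and matches the standard argument: the explicit extension $\eta_0(t/T) P_k v$, the $L^2_{t,x}$ bound from the short time support, and the modulation sum $\sum_j 2^{j/2} \min(2^{-j}, 2^{-\alpha k}) \lesssim 2^{-\alpha k/2}$ combine to give the $T^{1/2}$ decay you claim, summable in $k$ for smooth $v$.

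Your continuity argument for $N^{s,\alpha}(T)$ on $(0,T_0]$ has a gap. First, the patching is the wrong convex combination: with $\chi_n = 1$ on $[-T_0,T_0]$ and vanishing outside $[-T_n,T_n]$, the function $\hat v_k = \chi_n \tilde v_k + (1-\chi_n) P_k v$ does \emph{not} equal $P_k v$ on the band $(T_0, T_n)$ (there $\chi_n \in (0,1)$ while $\tilde v_k$ need not agree with $P_k v$), so $\hat v_k$ is not an admissible competitor for $T_n$. The correct patch is $\hat v_k = \tilde v_k + \phi_n (P_k v - \tilde v_k)$ with $\phi_n = 1$ on $[-T_n, T_n]$ and supported slightly beyond; the correction is then supported on a band of width $\sim T_n - T_0$, but its $L^\infty_t L^2_x$ size involves $\Vert \tilde v_k \Vert_{L^\infty_t L^2_x}$, which is finite for a fixed near-optimal extension yet not a priori uniform. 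This is harmless for right-continuity (fix $\tilde v_k$ once, let $T_n \downarrow T_0$, then use your dominating sequence $2^{-\alpha k/2} T_0^{1/2} \Vert P_k v \Vert_{L^\infty L^2}$ for the $\ell^2_k$ sum), but left-continuity is not addressed at all: monotonicity only gives $\limsup_{T_n \uparrow T_0} \Vert v \Vert_{N^{s,\alpha}(T_n)} \leq \Vert v \Vert_{N^{s,\alpha}(T_0)}$, and the reverse inequality requires near-optimal extensions for each $T_n$ whose $L^\infty_t L^2_x$ norms are bounded uniformly in $n$, which needs a separate argument.
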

\begin{proof}
See \cite[Lemma~4.2.,~Eq.~(4.6),~p.~279]{IonescuKenigTataru2008} for the original proof on the real line and \cite[Lemma~8.1.,~p.~1719]{GuoOh2018} for a proof of these properties on the torus.
\end{proof}
We are ready to prove a priori estimates for smooth solutions. The argument below is standard when establishing a priori estimates in the context of short-time Fourier restriction spaces (cf. \cite[Proposition~4.1.,~p.~270]{IonescuKenigTataru2008}).
\begin{proof}[Proof of Lemma \ref{lem:aPrioriEstimateSmoothSolutions}]
Assuming that $u_0$ is a smooth and real-valued initial datum, we find from the classical well-posedness theory the global existence of a smooth and real-valued solution $u \in C(\mathbb{R},H^{\infty})$ (see e.g. \cite{Bourgain1993FourierTransformRestrictionPhenomenaI}), which satisfies the set of estimates \eqref{eq:completePropagation}.

We define $X(T)= \Vert u \Vert_{E^{s}(T)} + \Vert \mathfrak{N}(u) \Vert_{N^{s,1}(T)}$ and find the bound 
\begin{equation*}
 X(T)^2 \leq C_1 \Vert u_0 \Vert_{H^s}^2 + C_2((T^\theta M^{c(s)} + M^{-d(s)})X(T)^2 + T^\theta X(T)^4) X(T)^2
\end{equation*} 
by eliminating $\Vert u \Vert_{F^{s,\alpha}(T)}$ in the system of estimates \eqref{eq:completePropagation}.\\
Set $R=C_1^{1/2} \Vert u_0 \Vert_{H^s}$ and choose $M=M(R)$ large enough so that $C_2 M^{-d(s)} (2R)^2 < 1/4$.

Next, choose $T_0 = T_0(R) \leq 1$ small enough so that $C_2 T_0^\theta (M^{c(s)}(2R)^2 + (2R)^4) < 1/4$. Together with Lemma \ref{lem:limitProperties}, a continuity argument yields $X(T) \leq 2R$ for $T \leq T_0$. Iterating the argument yields $\sup_{t \in [0,T_0]} \Vert u(t) \Vert_{H^s(\T)} \lesssim \Vert u_0 \Vert_{H^s}$ for $T_0 = T_0(\Vert u_0 \Vert_{H^s})$. The proof is complete.
\end{proof}
We turn to establishing the existence of the solution mapping. For $u_0 \in H^s(\T)$, we set $u_{0,n} = P_{\leq n} u_0$ for $n \in \mathbb{N}$. Obviously, $u_{0,n} \in H^{\infty}(\T)$, hence the initial data give rise to smooth global solutions $u_n \in C(\R,H^{\infty}(\T))$. According to Lemma \ref{lem:aPrioriEstimateSmoothSolutions}, we have a priori estimates on a time interval $[0,T_0]$ where $T_0 = T_0(\Vert u_0 \Vert_{H^s})$ independent of $n$. Moreover, we have the following compactness lemma. In the context of short-time $X^{s,b}$-spaces the below arguments were given in \cite[Lemma~8.2.,~p.~1724]{GuoOh2018} for the Wick-ordered cubic NLS. Thus, the proof is omitted.
\begin{lemma}
\label{lem:compactnessLemma}
Let $u_0 \in H^s(\T)$ for some $s>0$. Let $u_n$ be the smooth global solutions to \eqref{eq:renormalizedmKdV} with $u_n(0) = u_{0,n}$ like above.

Then, $(u_n)_{n \in \N}$ is precompact in $C([-T,T],H^s(\T))$ for $T \leq T_0 = T_0(\Vert u_0 \Vert_{H^s})$.
\end{lemma}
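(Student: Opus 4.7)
The plan is a Bona--Smith-type argument modeled on \cite[Lemma~8.2]{GuoOh2018}, aiming to prove that $(u_n)$ is in fact a Cauchy sequence in $C([-T,T], H^s(\T))$, which of course yields precompactness. First, Lemma \ref{lem:aPrioriEstimateSmoothSolutions} applied to each $u_n$, together with the uniform bound $\Vert u_{0,n} \Vert_{H^s} \leq \Vert u_0 \Vert_{H^s}$, gives a common time of existence $T_0 = T_0(\Vert u_0 \Vert_{H^s})$ independent of $n$, on which the bootstrap carried out there in fact yields $\sup_n \Vert u_n \Vert_{F^{s,1}(T_0)} \lesssim \Vert u_0 \Vert_{H^s}$.

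Second, I would establish a difference estimate at a lower regularity $\sigma \in [0,s)$. The difference $w_{n,m} = u_n - u_m$ satisfies
\begin{equation*}
\partial_t w_{n,m} + \partial_{xxx} w_{n,m} = \mathfrak{N}(u_n) - \mathfrak{N}(u_m),
\end{equation*}
and the right-hand side is trilinear in $(w_{n,m}, u_n, u_m)$. Revisiting Proposition \ref{prop:nonlinearEstimate} and Proposition \ref{prop:energyPropagation} with one slot at regularity $\sigma$ and the other two at regularity $s$, and combining with the linear estimate from Lemma \ref{lem:linearEstimatesShorttimeSpaces}, a bootstrap analogous to the proof of Lemma \ref{lem:aPrioriEstimateSmoothSolutions} should yield, for $T \leq T_0$ sufficiently small depending only on $\Vert u_0 \Vert_{H^s}$,
\begin{equation*}
\Vert w_{n,m} \Vert_{C([-T,T], H^\sigma)} \lesssim \Vert u_{0,n} - u_{0,m} \Vert_{H^\sigma}.
\end{equation*}
Since $u_{0,n} = P_{\leq n} u_0 \to u_0$ in $H^s$ and hence in $H^\sigma$, this shows that $(u_n)$ is Cauchy in $C([-T,T], H^\sigma)$.

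Third, I would upgrade convergence from $H^\sigma$ to $H^s$ by a Bona--Smith regularization. Given $\varepsilon > 0$, pick $K$ so large that $\Vert P_{>K} u_0 \Vert_{H^s} < \varepsilon$, and let $v_K \in C(\R, H^\infty(\T))$ denote the smooth global solution with data $P_{\leq K} u_0$. For $n \geq K$, $u_n(0) - v_K(0) = (P_{\leq n} - P_{\leq K}) u_0$ has $H^s$-norm at most $\varepsilon$, while $v_K$ enjoys uniform $H^{s+\gamma}$ bounds on $[-T,T]$ for some small $\gamma > 0$ (with a $K$-dependent but finite constant, obtained by persistence of regularity for the smooth initial datum $P_{\leq K} u_0$). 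Applying the difference estimate at regularity $s$, where the extra $\gamma$ derivatives on one reference slot $v_K$ are used to absorb the derivative loss in the trilinear expression, I expect to obtain $\Vert u_n - v_K \Vert_{C([-T,T], H^s)} \lesssim \varepsilon$ uniformly in $n \geq K$. Hence $\Vert u_n - u_m \Vert_{C([-T,T], H^s)} \lesssim \varepsilon$ for $n, m \geq K$, so $(u_n)$ is Cauchy in $C([-T,T], H^s(\T))$, and in particular precompact.

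The main obstacle is the last step: the short-time trilinear and energy estimates from Sections \ref{section:shorttimeTrilinearEstimates} and \ref{section:EnergyEstimates} must be revisited with \emph{mixed} regularities, one slot at $H^{s+\gamma}$ and the others at $H^s$, to verify that a single smooth reference slot absorbs the derivative loss and closes the bootstrap at $H^s$ level. As in \cite{GuoOh2018}, the $K$-dependent constants enter only through $v_K$ and remain finite for each fixed $K$, so for every $\varepsilon > 0$ the smallness comes purely from the $H^s$-smallness of $u_n(0) - v_K(0)$, which is what makes the Bona--Smith comparison go through.
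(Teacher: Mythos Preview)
Your outline is a legitimate Bona--Smith scheme, but Step~2 glosses over a genuine difficulty. You propose to obtain the $H^\sigma$-level difference estimate by ``revisiting Proposition~\ref{prop:energyPropagation} with one slot at regularity $\sigma$ and the other two at regularity $s$''. However, that proposition is proved for a \emph{single} solution: the crucial first step is the symmetrization over all four frequency slots that collapses the raw multiplier $a(n_4)n_4$ to $\psi_{s,a}(\overline{n}) = \sum_{i=1}^4 a(n_i)n_i$, to which Lemma~\ref{lem:pointwiseMultiplierBound} then applies. The paper warns explicitly in Section~\ref{section:EnergyEstimates} that this ``symmetrization argument \ldots\ fails for the difference equation. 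This is due to the lack of continuous dependence for $s<1/2$.'' For $w_{n,m} = u_n - u_m$ you can symmetrize only between the two $w$-slots; the resulting multiplier (e.g.\ $a(n_1)n_1 + a(n_4)n_4$ in a $w,u,u,w$ configuration) does not enjoy the pointwise bound of Lemma~\ref{lem:pointwiseMultiplierBound}, and closing the energy estimate then requires genuinely new work exploiting the $s-\sigma$ gap on the $u$-slots, not a re-reading of the existing proof. The same obstruction recurs in your Step~3 in the terms of type $w\,u_n\,u_n\,w$, where no factor of $v_K$ is present to supply the extra $\gamma$ derivatives you invoke.

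The route the paper has in mind avoids difference energy estimates altogether. Remark~\ref{rem:ExtraSmoothing} records that the single-solution energy estimates carry slack in the regularity on the right-hand side; running the a~priori argument for each $u_n$ and reading off this slack at high frequency yields a \emph{uniform tail estimate} $\sup_n \sup_{|t|\leq T}\|P_{>K}u_n(t)\|_{H^s} \to 0$ as $K\to\infty$ (this is precisely the ``uniform tail estimate from the proof of Lemma~\ref{lem:compactnessLemma}'' invoked in the proof of Theorem~\ref{thm:localExistence}). Precompactness then follows by combining this with Arzel\`a--Ascoli for the finite-dimensional low-frequency projections $P_{\leq K}u_n$, whose equicontinuity in time is immediate from the equation. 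This approach stays entirely within the single-solution energy machinery, so the symmetrization obstruction never arises; the price is that one obtains only subsequential convergence rather than the full Cauchy property you aim for.
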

We are ready to prove the main result:
\begin{proof}[Proof of Theorem \ref{thm:localExistence}]
For $u_0 \in H^s(\T)$ let $(u_n)_{n \in \mathbb{N}}$ be the smooth global solutions generated from the initial data $P_{\leq n} u_0$ like described above. By Lemma \ref{lem:compactnessLemma}, we find a convergent subsequence $(u_{n_k})$ which converges to a function $u \in C([-T,T],H^s)$. Due to a uniform tail estimate from the proof of Lemma \ref{lem:compactnessLemma}, the sequence also converges in $E^s(T)$. With $\Vert \mathfrak{N}(u_n - u) \Vert_{N^{s,1}(T)} \lesssim T^\theta \Vert u_0 \Vert_{H^s}^2 \Vert u_n - u \Vert_{F^{s,1}(T)}$, we find for $T= T(\Vert u_0 \Vert_{H^s})$ the estimate
\begin{equation*}
\Vert u_n - u \Vert_{F^{s,1}(T)} \lesssim \Vert u_n - u \Vert_{E^s(T)}
\end{equation*}
to hold. The convergence in $F^{s,1}(T)$ already gives the a priori estimate for the limit. Moreover, we deduce from the multilinear estimates in Proposition \ref{prop:nonlinearEstimate} that $( \mathfrak{N}(u_n) )$ converges to $\mathfrak{N}(u)$ in $N^{s,1}(T) \hookrightarrow \mathcal{D}^\prime$. We conclude that $u$ satisfies \eqref{eq:renormalizedmKdV} in the sense of generalized functions with the claimed properties, and the proof is complete. 
\end{proof}
For the proof of Theorem \ref{thm:nonExistence} one compares smooth solutions to \eqref{eq:mKdV} and \eqref{eq:renormalizedmKdV} via a gauge transform. Invoking the Riemann-Lebesgue lemma, the existence of solutions to \eqref{eq:renormalizedmKdV} rules out the existence of non-trivial solutions to \eqref{eq:mKdV}. For details we refer to \cite[Section~9]{GuoOh2018}.
\section{Multilinear estimates}
\label{section:MultilinearEstimates}
In the following we recall and derive multilinear estimates for functions with support of the space-time Fourier transform adapted to the Airy equation. We denote the frequency ranges by $k_i$ and the modulation ranges by $j_i$. The decreasing arrangements are denoted by $k_i^*$ or $j_i^*$, respectively.

We recall the following linear Strichartz estimates going back to Bourgain (cf. \cite{Bourgain1993FourierTransformRestrictionPhenomenaI,Bourgain1993FourierTransformRestrictionPhenomenaII}):
\begin{lemma}
\label{lem:linearStrichartzEstimates}
Given $u \in X^{0,1/3}$, the estimate
\begin{equation}
\label{eq:L4StrichartzEstimate}
\Vert u \Vert_{L^4_{t}(\R,L_x^4(\T))} \lesssim \Vert u \Vert_{X^{0,1/3}}
\end{equation}
holds true.

Given $u_0 \in L^2(\T)$ with $\mathrm{supp}(\hat{u}_0) \subseteq [-N,N]$, we find
\begin{equation}
\label{eq:L6StrichartzEstimate}
\Vert S(t) u_0 \Vert_{L^6_{t,x}(\T^2)} \lesssim C_\varepsilon N^\varepsilon \Vert u_0 \Vert_{L^2(\T)}.
\end{equation}
\end{lemma}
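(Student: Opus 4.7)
The plan is to establish both inequalities via Bourgain's original arguments from \cite{Bourgain1993FourierTransformRestrictionPhenomenaI,Bourgain1993FourierTransformRestrictionPhenomenaII}, transferring the $X^{s,b}$-control to lattice point counting problems constrained by the Airy dispersion relation $\omega(\xi) = \xi^3$. In both cases the core step is a bound on the free Airy propagator $S(t) = e^{-t \partial_{xxx}}$ acting on frequency-localized data, from which the $X^{s,b}$-version is recovered by the standard Bourgain transfer principle.

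For \eqref{eq:L4StrichartzEstimate}, I would first reduce to proving $\Vert S(t) u_0 \Vert_{L^4_{t,x}([0,2\pi] \times \T)} \lesssim \Vert u_0 \Vert_{L^2}$ for $u_0 \in L^2(\T)$. Squaring the Fourier series of $S(t) u_0$ and applying Plancherel in both $t$ and $x$ on the product torus gives
\[
\Vert S(t) u_0 \Vert_{L^4_{t,x}}^4 \lesssim \sum_{m, k \in \Z} \Big| \sum_{\substack{n_1 + n_2 = m \\ n_1^3 + n_2^3 = k}} \hat{u}_0(n_1) \hat{u}_0(n_2) \Big|^2;
\]
for $m \neq 0$, substituting $n_2 = m - n_1$ into the cubic constraint produces a quadratic in $n_1$, so there are at most two admissible pairs per $(m,k)$, and Cauchy--Schwarz closes the bound (the case $m=0$ being handled separately by $n_2 = -n_1$). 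To upgrade to the sharp exponent $b = 1/3$ on $X^{0,b}$, I would decompose $u$ dyadically in the modulation variable, apply the homogeneous estimate to each slice written as a superposition of modulated free solutions, and resum using that the weight $2^{j/3}$ exactly absorbs the $L^4_t$-loss coming from the overlap between time windows.

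For \eqref{eq:L6StrichartzEstimate}, rewriting $\Vert S(t) u_0 \Vert_{L^6_{t,x}}^6 = \Vert (S(t) u_0)^3 \Vert_{L^2_{t,x}}^2$ and applying Plancherel reduces the task to counting six-tuples $(n_1, \dots, n_6) \in ([-N,N] \cap \Z)^6$ simultaneously satisfying
\[
n_1 + n_2 + n_3 = n_4 + n_5 + n_6, \qquad n_1^3 + n_2^3 + n_3^3 = n_4^3 + n_5^3 + n_6^3.
\]
Via the Newton identity $a^3 + b^3 + c^3 - 3abc = (a+b+c)(a^2 + b^2 + c^2 - ab - bc - ca)$, the two constraints translate into conditions on the first and third elementary symmetric polynomials of the triples $(n_1,n_2,n_3)$ and $(n_4,n_5,n_6)$. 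After fixing the common values of these symmetric functions, the remaining freedom is governed by a multiplicative identity of shape $n_i n_j n_k = D$ for an integer $D$ of polynomial size $O(N^c)$, and Dirichlet's divisor bound $d(D) \lesssim_\varepsilon D^\varepsilon$ produces the $N^\varepsilon$ loss. Degenerate strata (zero sum, coincidences between the two triples, or vanishing of an individual $n_i$) are handled by separate elementary counting.

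I expect the divisor-bound step to be the main obstacle: one has to arrange the parametrization so that the surviving constraint is genuinely multiplicative, and to check that the various degenerate loci do not dominate the contribution. Once the counting bound is in hand, the transfer to $X^{s,b}$ is routine, and following \cite{Bourgain1993FourierTransformRestrictionPhenomenaI,Bourgain1993FourierTransformRestrictionPhenomenaII} completes the proof.
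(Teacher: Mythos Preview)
Your approach is exactly that of the paper, which simply cites Bourgain \cite{Bourgain1993FourierTransformRestrictionPhenomenaII} for both estimates; you are sketching the contents of those citations, so there is no divergence in strategy.

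One algebraic point in your $L^6$ sketch needs correcting. The Newton identity you wrote, $a^3+b^3+c^3-3abc=(a+b+c)(a^2+b^2+c^2-ab-bc-ca)$, does \emph{not} pin down $e_3=n_1n_2n_3$ from the data $e_1=m$ and $p_3=k$; it only gives the linear relation $e_3=me_2+(k-m^3)/3$, leaving $e_2$ free. The identity that actually yields a multiplicative constraint is
\[
(n_1+n_2+n_3)^3-(n_1^3+n_2^3+n_3^3)=3(n_1+n_2)(n_2+n_3)(n_3+n_1),
\]
which, after substituting $n_i+n_j=m-n_\ell$, reads $(m-n_1)(m-n_2)(m-n_3)=(m^3-k)/3$. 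Now the divisor bound applies to factorizations of $(m^3-k)/3$, each factorization $d_1d_2d_3$ determining $n_i=m-d_i$ (with the additional linear constraint $d_1+d_2+d_3=2m$ cutting down further). The degenerate stratum $m^3=k$ forces some $n_i=m$ and $n_j+n_\ell=0$, which is the diagonal contribution handled separately, as you anticipated. With this fix your outline goes through and reproduces Bourgain's argument.
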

\begin{proof}
Estimate \eqref{eq:L4StrichartzEstimate} is proved in \cite[Proposition~7.15.,~p.~211]{Bourgain1993FourierTransformRestrictionPhenomenaII}, and \eqref{eq:L6StrichartzEstimate} is \cite[Estimate~(8.37),~p.~227]{Bourgain1993FourierTransformRestrictionPhenomenaII}. 
\end{proof}
By the above estimates, we find the following due to H\"older's inequality and almost orthogonality:
\begin{lemma}
\label{lem:multilinearStrichartzEstimates}
For $u \in L^2(\R \times \T)$ with $\mathrm{supp}(\tilde{u}_i) \subseteq D_{k_i,\leq j_i}$ we find the following estimates to hold:
\begin{align}
\label{eq:multilinearL4StrichartzEstimate}
\int_{\R \times \T} u_1 u_2 u_3 u_4 dt dx &\lesssim \prod_{i=1}^4 2^{j_i/3} \Vert \mathcal{F}_{t,x} (u_i) \Vert_{L_{\tau}^2 \ell_n^2}, \\
\label{eq:multilinearL6StrichartzEstimate}
\int_{\R \times \T} u_1 u_2 u_3 u_4 dx dt &\lesssim 2^{-j_1^*/2} 2^{\varepsilon k_3^*} \prod_{i=1}^4 2^{j_i/2} \Vert \mathcal{F}_{t,x}(u_i) \Vert_{L_{\tau}^2 \ell_n^2}.
\end{align}
\end{lemma}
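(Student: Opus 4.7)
The first estimate is immediate by H\"older's inequality with exponents $(4,4,4,4)$ combined with the $L^4$ Strichartz estimate \eqref{eq:L4StrichartzEstimate}: since each $u_i$ has modulation support $\lesssim 2^{j_i}$, one has $\Vert u_i \Vert_{L^4_{t,x}} \lesssim \Vert u_i \Vert_{X^{0,1/3}} \lesssim 2^{j_i/3} \Vert \mathcal{F}_{t,x} u_i \Vert_{L^2_\tau \ell^2_n}$, and taking the product gives \eqref{eq:multilinearL4StrichartzEstimate}.

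For \eqref{eq:multilinearL6StrichartzEstimate} the plan is to split via H\"older as $L^2 \cdot L^6 \cdot L^6 \cdot L^6$, placing the factor $u_{i_0}$ with the largest modulation $2^{j_1^*}$ in the $L^2$ slot and the remaining three in $L^6$. By transference, \eqref{eq:L6StrichartzEstimate} yields $\Vert v \Vert_{L^6_{t,x}} \lesssim_\varepsilon N^\varepsilon 2^{j/2} \Vert \mathcal{F}_{t,x} v \Vert_{L^2_\tau \ell^2_n}$ for any $v$ with Fourier support in an interval of length $N$ and modulation $\lesssim 2^j$. Applied naively, this would cost $2^{\varepsilon(k_1^* + k_2^* + k_3^*)}$, which is too lossy because the spatial convolution constraint $n_1+n_2+n_3+n_4 = 0$ forces $k_1^* \sim k_2^*$.

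To bring the loss down to the required $2^{\varepsilon k_3^*}$, I would decompose the two top-frequency factors into Fourier blocks of length $2^{k_3^*}$, writing $u_i^* = \sum_\ell u_i^{*,\ell}$ for $i=1,2$. The constraint $|n_1 + n_2| = |n_3 + n_4| \lesssim 2^{k_3^*}$ then restricts each block $u_1^{*,\ell}$ to interact with only $O(1)$ matching blocks $u_2^{*,m}$. Applying the frequency-translated $L^6$ Strichartz blockwise (at cost $2^{\varepsilon k_3^*}$ per piece) and recombining via Cauchy--Schwarz using the $\ell^2$-orthogonality of the blocks yields the target bound; the three $L^6$ losses accumulate to $2^{3\varepsilon k_3^*}$, absorbed by redefining $\varepsilon$. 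The $L^2$ factor provides the $2^{-j_1^*/2}$ prefactor after the trivial rewriting $\Vert u_{i_0} \Vert_{L^2} = 2^{-j_1^*/2} \cdot 2^{j_1^*/2} \Vert \mathcal{F}_{t,x} u_{i_0} \Vert_{L^2_\tau \ell^2_n}$.

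The main delicate point is justifying the $L^6$ Strichartz for Fourier intervals $[a, a+N]$ rather than the centered $[-N,N]$ appearing in \eqref{eq:L6StrichartzEstimate}. My plan is to note that conjugation $u \mapsto e^{-iax} u$ preserves $\Vert \cdot \Vert_{L^6_{t,x}}$ and maps functions supported in $[a,a+N]$ to functions supported in $[0,N]$ satisfying a modified Airy equation with dispersion $\xi \mapsto (\xi + a)^3$; since Bourgain's divisor-counting argument underlying \eqref{eq:L6StrichartzEstimate} is driven only by the non-degenerate second derivative of the symbol, the estimate transfers to the translated interval with the same $N^\varepsilon$ loss. Once this is in hand, the two cases for the position of $i_0$ (either among the two highest frequencies or among the two lowest) both reduce to the same orthogonality bookkeeping carried out on the two top-frequency factors, and the constant is independent of the particular configuration.
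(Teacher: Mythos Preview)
Your argument for \eqref{eq:multilinearL4StrichartzEstimate} is correct and is exactly what the paper does. For \eqref{eq:multilinearL6StrichartzEstimate} your strategy---place the maximal-modulation factor in $L^2$, put the other three in $L^6$, and recover $2^{\varepsilon k_3^*}$ by decomposing the two top-frequency factors into intervals of length $2^{k_3^*}$ and using almost orthogonality---is the standard one and is precisely what the cited reference \cite[Eq.~(5.5)]{GuoOh2018} carries out for the Schr\"odinger dispersion.

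The point you flag as delicate is genuinely so, and your proposed resolution does not close it. For Schr\"odinger, Galilean invariance transports the $L^6$ estimate to any translated frequency interval with loss $N^\varepsilon$ for free; Airy has no such symmetry, and your claim that Bourgain's divisor argument depends only on the second derivative of the symbol overlooks that the integer being factored has size governed by the interval \emph{center}, not its length. Concretely, with $n_i \in [a,a+N]$ and $n_1+n_2+n_3=k$, $n_1^3+n_2^3+n_3^3=\omega$, one has $(k-n_1)(k-n_2)(k-n_3)=(k^3-\omega)/3=:M$ with each factor $k-n_i=n_j+n_\ell \sim a$, hence $|M|\sim |a|^3$; the divisor bound then gives $\lesssim_\varepsilon |a|^\varepsilon$, i.e.\ $2^{\varepsilon k_1^*}$ rather than $2^{\varepsilon k_3^*}$. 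This weaker loss is in fact all that the paper ever invokes downstream (the proofs of Lemmas~\ref{lem:ShorttimeHighHighLowHighInteraction} and \ref{lem:ShorttimeHighHighHighLowInteraction} record the bound as $2^{\varepsilon k_1^*}$), and for that version no block decomposition is needed: simply apply \eqref{eq:L6StrichartzEstimate} directly to each of the three non-$L^2$ factors. If you insist on the sharper $2^{\varepsilon k_3^*}$ as written in the lemma, some additional Airy-specific input beyond the divisor bound would be required.
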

\begin{proof}
Estimate \eqref{eq:multilinearL4StrichartzEstimate} follows from an application of H\"older's inequality. For a proof of \eqref{eq:multilinearL6StrichartzEstimate}, see for instance \cite[Equation~(5.5),~p.~1682]{GuoOh2018}. 
\end{proof}
In \cite{Bourgain1993FourierTransformRestrictionPhenomenaII} was conjectured that the estimate
\begin{equation*}
\Vert u \Vert_{L^8_{t,x}(\mathbb{R} \times \T)} \lesssim \Vert u \Vert_{X^{0+,1/2+}}
\end{equation*}
holds true. Interpolation with \eqref{eq:L4StrichartzEstimate} gives
\begin{equation*}
\Vert u \Vert_{L^6_{t,x}(\mathbb{R} \times \T)} \lesssim \Vert u \Vert_{X^{0+,4/9+}}.
\end{equation*}
This estimate would provide us with smoothing in any short-time $F^\alpha$-space and seems to be necessary to carry out energy estimates in negative Sobolev spaces. Dinh showed the short-time estimate \cite[Proposition~2.5,~p.~8812]{Dinh2017}
\begin{equation}
\label{eq:ShorttimeL6Estimate}
\Vert P_n e^{t \partial_x^3} u_0 \Vert_{L_t^6([0,2^{-2n}],L^6(\T))} \lesssim 2^{-n/6} \Vert P_n u_0 \Vert_{L^2}.
\end{equation}
We infer that the $L^6_{t,x}$-Strichartz estimate loses no derivatives in the $F^1$-space by the following
\begin{equation*}
\begin{split}
\Vert P_n e^{t \partial_x^3} u_0 \Vert_{L_t^6([0,2^{-n}],L^6(\T))} &\lesssim \left( \sum_{\substack{I \subseteq [0,2^{-n}], \\ I: |I| = 2^{-2n}}} \Vert P_n e^{t \partial_x^3} u_0 \Vert^6_{L_t^6(I,L^6(\T))} \right)^{1/6} \\
&\lesssim \Vert P_n u_0 \Vert_{L^2}
\end{split}
\end{equation*}
The smoothing obtained in the $F^{\alpha}$-spaces for $\alpha > 1$ by \eqref{eq:ShorttimeL6Estimate} is insufficient to prove energy estimates in negative Sobolev spaces.

We recall the following bilinear estimate from \cite{Molinet2012}.
\begin{lemma}[{\cite[Equation~(3.7),~p.~1906]{Molinet2012}}]
\label{lem:bilinearEstimate}
Let $f_1, f_2 \in L^2(\R \times \Z)$ with the following support properties
\begin{equation*}
(\tau,n) \in \; \text{supp} \; (f_i) \Rightarrow \langle \tau - n^3 \rangle \lesssim 2^{j_i}, \quad i=1,2,
\end{equation*}
where $j_1 \leq j_2$.\\
Then, for any $2^k>0$, we find the following estimate to hold:
\begin{equation}
\label{eq:BilinearEstimate}
\Vert f_1 * f_2 \Vert_{L_\tau^2 \ell_n^2(|n| \geq 2^k)} \lesssim 2^{j_1/2} \left( 2^{(j_2-k)/4} + 1 \right) \Vert f_1 \Vert_{L^2} \Vert f_2 \Vert_{L^2}.
\end{equation}
\end{lemma}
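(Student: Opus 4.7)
The plan is to reduce the bilinear estimate to a geometric counting problem for lattice points in the level set of the resonance function, and then to exploit the quadratic structure of that resonance in the summation variable. I would begin by writing
\[
(f_1 * f_2)(\tau, n) = \sum_{n_1 \in \mathbb{Z}} \int_{\mathbb{R}} f_1(\tau_1, n_1) \, f_2(\tau - \tau_1, n - n_1) \, d\tau_1
\]
and applying the Cauchy--Schwarz inequality in the $(\tau_1, n_1)$-variables to obtain
\[
|(f_1 * f_2)(\tau, n)|^2 \leq |E(\tau, n)| \cdot (|f_1|^2 * |f_2|^2)(\tau, n),
\]
where $E(\tau, n) \subseteq \mathbb{R} \times \mathbb{Z}$ is the set of $(\tau_1, n_1)$ for which both $(\tau_1, n_1) \in \mathrm{supp}\,f_1$ and $(\tau - \tau_1, n - n_1) \in \mathrm{supp}\,f_2$. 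Taking the $L^2_\tau \ell^2_n$-norm on the region $|n| \geq 2^k$ and using $\Vert |f_i|^2 \Vert_{L^1} = \Vert f_i \Vert_{L^2}^2$, the estimate reduces to proving
\[
\sup_{\substack{|n| \geq 2^k \\ \tau \in \mathbb{R}}} |E(\tau, n)| \lesssim 2^{j_1}\bigl(2^{(j_2 - k)/2} + 1\bigr).
\]

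Next, I would bound $|E(\tau, n)|$ by slicing in $n_1$. For each fixed $n_1 \in \mathbb{Z}$, the admissible $\tau_1$ lies in the intersection of an interval of length $\lesssim 2^{j_1}$ centered at $n_1^3$ and one of length $\lesssim 2^{j_2}$ centered at $\tau - (n-n_1)^3$; because $j_1 \leq j_2$, this intersection has measure $\lesssim 2^{j_1}$. Moreover the intersection is nonempty only when the two centers are within $O(2^{j_2})$ of each other, i.e.\ $|\tau - h(n_1)| \lesssim 2^{j_2}$, where
\[
h(n_1) := n_1^3 + (n - n_1)^3 = 3n\bigl(n_1 - n/2\bigr)^2 + n^3/4.
\]
Summing then gives
\[
|E(\tau, n)| \lesssim 2^{j_1} \cdot \#\{n_1 \in \mathbb{Z} : |\tau - h(n_1)| \lesssim 2^{j_2}\}.
\]

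Finally, I would carry out the lattice count. Since $h$ is a quadratic in $n_1$ with leading coefficient $3n$ of magnitude $\gtrsim 2^k$, the preimage under $h$ of an interval of length $O(2^{j_2})$ consists of at most two real intervals, each of length
\[
\lesssim \sqrt{2^{j_2}/|n|} \lesssim 2^{(j_2 - k)/2},
\]
so it contains at most $\lesssim 2^{(j_2 - k)/2} + 1$ integers. Inserting this bound and taking a square root produces exactly the factor $2^{j_1/2}\bigl(2^{(j_2 - k)/4} + 1\bigr)$ claimed by the lemma. The only nontrivial step is the lattice-point count, which genuinely requires the lower bound $|n| \geq 2^k$ in order to exploit the curvature of $h$; the Cauchy--Schwarz reduction and the measure bookkeeping in the preceding paragraphs are routine.
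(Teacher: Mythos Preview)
Your proof is correct and is the standard argument for this type of bilinear Strichartz estimate: Cauchy--Schwarz reduces the convolution to a measure/counting problem, and the quadratic structure of $h(n_1)=n_1^3+(n-n_1)^3$ in $n_1$, with leading coefficient $3n$ of size $\gtrsim 2^k$, yields the lattice-point bound. The paper does not supply its own proof of this lemma---it simply cites \cite[Equation~(3.7),~p.~1906]{Molinet2012}---so there is nothing to compare against beyond noting that your argument is precisely the classical one underlying that reference.
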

In case of separated frequencies, we can refine the above estimates. 
The following lemma is adapted to the nonlinear interaction dictated by the modified Korteweg-de Vries equation.
If there is one frequency significantly lower than the remaining three, the resonance is very favourable, and we do not need a refined estimate. Thus, we only consider the case where two frequencies are smaller than the remaining two, which is relevant for $High \times Low \times Low \rightarrow High$-interaction:
 \begin{lemma}
 \label{lem:RefinedHighLowLowHighMultilinearEstimate}
 Suppose that $k_4 \geq 20$, $k_1 \leq k_2 \leq k_3-5$ and $u_i \in L^2(\mathbb{R} \times \mathbb{Z})$ for $i=1,\ldots,4$. Moreover, suppose that $j_i \geq [\alpha k_1^*]$ for $i =1, \ldots, 4$ with $\alpha \leq 2$ and $\text{supp}(\tilde{u}_i) \subseteq D_{k_i,\leq j_i}$, and suppose that $supp_{n} (\tilde{u}_i) \subseteq J_i$, where $|J_i| \lesssim 2^l$.\\
 Then, we find the following estimate to hold: 
 \begin{equation}
 \label{eq:HighLowLowHighInteractionEstimate}
 \int_{\R \times \T} \; u_1(t,x) u_2(t,x) u_3(t,x) u_4(t,x) dt dx \lesssim M \prod_{i=1}^4 2^{j_i/2} \Vert \mathcal{F}_{t,x} (u_i) \Vert_{L^2_\tau \ell^2_n},
 \end{equation}
 where $M = 2^{l/2} 2^{-j_1^*/2} 2^{-[\alpha k_1^*]/2}$.
 \end{lemma}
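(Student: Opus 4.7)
The plan is to use Cauchy--Schwarz in Fourier variables to split the quadrilinear form into a product of two bilinear $L^2_{t,x}$ norms, and then combine Lemma~\ref{lem:bilinearEstimate} with a Young-type bound exploiting the narrow spatial Fourier support $|J_i|\lesssim 2^l$.

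First I would write
\[
\int_{\R\times\T} u_1u_2u_3u_4\,dt\,dx \leq \|u_1u_4\|_{L^2_{t,x}}\,\|u_2u_3\|_{L^2_{t,x}},
\]
pairing a low-frequency factor with a high-frequency factor in each term, so that the convolutions $\tilde u_1*\tilde u_4$ and $\tilde u_2*\tilde u_3$ are concentrated at $|n|\sim 2^{k_1^*}$; this uses $k_1\leq k_2\leq k_3-5\leq k_4$. Applying Lemma~\ref{lem:bilinearEstimate} with $2^k=2^{k_1^*}$ yields
\[
\|u_iu_j\|_{L^2_{t,x}}\lesssim 2^{\min(j_i,j_j)/2}\bigl(2^{(\max(j_i,j_j)-k_1^*)/4}+1\bigr)\|\tilde u_i\|_{L^2}\|\tilde u_j\|_{L^2}.
\]
Separately, the spatial Fourier support constraint gives the direct estimate
\[
\|u_iu_j\|_{L^2_{t,x}}\lesssim 2^{l/2}\, 2^{\min(j_i,j_j)/2}\,\|\tilde u_i\|_{L^2}\|\tilde u_j\|_{L^2},
\]
obtained from $\|\tilde u_i\|_{L^1_\tau\ell^1_n}\lesssim 2^{(l+j_i)/2}\|\tilde u_i\|_{L^2}$ by Young's inequality applied to the convolution on the support of size $\lesssim 2^{l+j_i}$.

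Next I would combine these two bounds via the geometric mean $\min(A,B)\leq(AB)^{1/2}$ to produce a factor of $2^{l/4}$ together with $\bigl(2^{(\max-k_1^*)/4}+1\bigr)^{1/2}$ per pair; multiplying over both pairs yields the $2^{l/2}$ in $M$ and, using $j_i\geq [\alpha k_1^*]$, the factor $2^{-[\alpha k_1^*]/2}$. The pairing $\{(1,4),(2,3)\}$ or $\{(1,3),(2,4)\}$ is chosen so that the two largest modulations $j_1^*,j_2^*$ sit in different pairs, which forces $\min(j_a,j_b)+\min(j_c,j_d)=j_3^*+j_4^*$ and produces the factor $\prod_{i=1}^{4}2^{j_i/2}\cdot 2^{-j_1^*/2}$ present in the target.

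The hard part will be the detailed bookkeeping in the case analysis: (i) handling the ``$+1$'' term in the bilinear bound when $\max(j_i,j_j)<k_1^*$, which can only occur for $\alpha<1$ and in which regime the support estimate dominates; (ii) verifying that the combined bound matches the claimed exponent $-[\alpha k_1^*]/2$ in the borderline regime $\alpha$ close to $2$, where Lemma~\ref{lem:bilinearEstimate} alone barely suffices and one must exploit the implicit resonance constraint $|\Omega|=3|(n_1+n_2)(n_2+n_3)(n_1+n_3)|\lesssim 2^{j_1^*}$ from $\sum_i n_i=0$ to control $l$ (and hence $k_1$) relative to $k_1^*$; and (iii) choosing correctly between the bilinear estimate and the support estimate in each dyadic regime of $\max(j_i,j_j)$ relative to $k_1^*+2l$, with the threshold being where $2^{(\max-k_1^*)/4}$ balances $2^{l/2}$.
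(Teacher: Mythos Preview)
Your approach recovers the estimate when $\alpha\le 1$ (use the support bound on the pair containing $j_1^*$ and Lemma~\ref{lem:bilinearEstimate} on the other pair; then $2^{(j_2^*-k_1^*)/4}+1\lesssim 2^{(j_2^*-[\alpha k_1^*])/2}$ since $j_2^*\ge[\alpha k_1^*]\le k_1^*$). However, for $1<\alpha\le 2$, which the lemma explicitly allows, the argument breaks down. Take all $j_i=[\alpha k_1^*]$: the target bound reads $\int u_1\cdots u_4\lesssim 2^{l/2}2^{[\alpha k_1^*]}\prod\|\tilde u_i\|_{L^2}$, while your best combination (support on one pair, bilinear on the other) yields $2^{l/2}2^{[\alpha k_1^*]}\cdot 2^{([\alpha k_1^*]-k_1^*)/4}$, which is too large by $2^{(\alpha-1)k_1^*/4}$. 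The geometric-mean variant you describe is even worse. The resonance constraint you invoke in point (ii) is not among the hypotheses of the lemma and cannot be used to rescue the bound.

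The underlying reason is that Lemma~\ref{lem:bilinearEstimate} only exploits a \emph{bilinear} dispersive derivative of size $\sim 2^{k_1^*}$, which translates into the weight $2^{(j-k_1^*)/4}$. What the present lemma needs is the stronger \emph{quadrilinear} gain coming from the full resonance phase
\[
h(\tau_1,\tau_3,\tau_4,n_1,n_3,n_4)=-\tau_1-\tau_3-\tau_4+3(n_1+n_3)(n_1+n_4)(n_3+n_4),
\]
whose partial derivative in a high-frequency variable, say $\partial_{n_4}h$, has size $|n_1+n_3||n_4-n_2|\sim 2^{2k_1^*}$ because \emph{two} of the three factors are large in this frequency configuration. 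The paper's proof works directly in Fourier variables: it isolates the function carrying $j_1^*$, then applies Cauchy--Schwarz in the spatial frequency along which $|\partial h|\gtrsim 2^{2k_1^*}$, so that the sublevel set $\{n_4:|h|\lesssim 2^{j_2}\}$ has cardinality $\lesssim 1+2^{j_2-2k_1^*}$. This quadratic gain in $k_1^*$ is precisely what yields the factor $2^{-[\alpha k_1^*]/2}$ up to $\alpha=2$, and it cannot be recovered by any combination of the off-the-shelf bilinear bound with the support estimate.
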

\begin{proof}
We denote the space-time Fourier transform of $u_i:\R \times \T \rightarrow \C$ by $f_i:\R \times \Z \rightarrow \C$, $\tilde{u}_i(\tau,n) = f_i(\tau,n)$.\\
Further, we consider the shifted function $g_i(\tau,n) = f_i(\tau+n^3,n)$ and observe $(\tau,n) \in \mathrm{supp}(g_i) \Leftrightarrow (\tau+n^3,n) \in \mathrm{supp}(f_i)$. Hence, for $(\tau,n) \in \text{supp} (g_i)$ we find $|\tau| \lesssim 2^{j_i}$.

 \textbf{Case A:} Suppose that $j_1^* = j_1$. That means a low frequency carries a high modulation. It will be easy to see that the computation below can also deal with the case $j_1^* = j_2$ by exchanging the roles of $g_1$ and $g_2$.\\
We find after a change of variables in Fourier space
\begin{equation}
\label{eq:QuadlinearExpressionFourierSpace}
\begin{split}
&\quad \int_{\T} dx \int_{\R} dt u_1(t,x) u_2(t,x) u_3(t,x) u_4(t,x) \\
&= \int_{\tau_1+\ldots+\tau_4=0} \sum_{n_1+\ldots+n_4=0} f_1(\tau_1,n_1) f_2(\tau_2,n_2) f_3(\tau_3,n_3) f_4(\tau_4,n_4) \\
&= \int_{\tau_1,\tau_3,\tau_4} \sum_{n_1,n_3,n_4} g_1(\tau_1,n_1) g_2(h(\tau_1,\tau_3,\tau_4,n_1,n_3,n_4),-n_1-n_3-n_4) \\
&\quad \quad g_3(\tau_3,n_3) g_4(\tau_4,n_4).
\end{split}
\end{equation} 
By means of the resonance function
\begin{equation*}
h(\tau_1,\tau_2,\tau_3,n_1,n_2,n_3)=-\tau_1-\tau_2-\tau_3+3(n_1+n_2)(n_1+n_3)(n_2+n_3),
\end{equation*}
we can compute the effective supports in the modulation variables.\\
Set
\begin{equation*}
E_{24} = \{ n_4 \in \mathbb{Z} | \, |h(\tau_1,\tau_3,\tau_4,n_1,n_3,n_4)| \lesssim 2^{j_2} \}.
\end{equation*}
Since the second variable is distinguished, we denote $h(\tau_1,\tau_3,\tau_4,n_1,n_3,n_4)$ by $h_2$ and compute $\partial_{n_4} h_2 = C (n_1+n_3)(n_4-n_2)$, which gives $|\partial_{n_4} h_2| \gtrsim 2^{2k_1^*}$.\\
Thus, an application of the Cauchy-Schwarz inequality yields $|E_{24}| \lesssim 1+2^{j_2-2k_1^*}$, and we derive
\begin{equation}
\label{eq:HighLowLowHighIntermediateMultilinearEstimateI}
 \begin{split}
 &\quad \sum_{n_1,n_3} \int d\tau_1 d\tau_3 d\tau_4 |g_1(\tau_1,n_1)| |g_3(\tau_3,n_3)| \sum_{n_4} |g_2(h_2,-n_1-n_3-n_4)| |g_4(\tau_4,n_4)| \\
 &\lesssim (1+2^{j_2-2k_1^*})^{1/2} \sum_{n_1,n_3} \int d\tau_3 |g_3(\tau_3,n_3)| \int d\tau_1 \int d\tau_4 |g_1(\tau_1,n_1)| \times\\
 &\quad \left( \sum_{n_4} |g_2(h_2,-n_1-n_3-n_4)|^2 |g_4(\tau_4,n_4)|^2 \right)^{1/2}.
\end{split} 
 \end{equation}
By repeated applications of the Cauchy-Schwarz inequality, it follows
 \begin{equation*}
 \begin{split}
 \eqref{eq:HighLowLowHighIntermediateMultilinearEstimateI} &\lesssim (1+2^{j_2-2k_1^*})^{1/2} \sum_{n_1,n_3} \int d\tau_3 |g_3(\tau_3,n_3)| \int d\tau_4 \left( \int d\tau_1 |g_1(\tau_1,n_1)|^2 \right)^{1/2} \\
 &\quad \left( \sum_{n_4} \Vert g_2(h_2,-n_1-n_3-n_4) \Vert_{L^2_{\tau_1}}^2 |g_4(\tau_4,n_4)|^2 \right)^{1/2} \\
 &\lesssim (1+2^{j_2-2k_1^*})^{1/2} \sum_{n_3} \int d\tau_3 |g_3(\tau_3,n_3)| \Vert g_1 \Vert_{L^2_{\tau} \ell^2_n} \Vert g_2 \Vert_{L^2_{\tau} \ell^2_n} \\
 &\quad \int d\tau_4 \left( \sum_{n_4} | g_4(\tau_4,n_4) |^2 \right)^{1/2} \\
 &\lesssim (1+2^{j_2-2k_1^*})^{1/2} 2^{l/2} 2^{j_3/2} 2^{j_4/2} \prod_{i=1}^4 \Vert g_i \Vert_{L^2_{\tau} \ell^2_n}.
 \end{split} 
 \end{equation*}
In case $j_2 \geq 2k_1^*$ we find \eqref{eq:HighLowLowHighInteractionEstimate} to hold with $M = 2^{l/2} 2^{-j_1^*/2} 2^{-k_1^*}$. If $j_2 \leq 2k_1^*$, we find \eqref{eq:HighLowLowHighInteractionEstimate} to hold with $M = 2^{l/2} 2^{-j_1^*/2} 2^{-[\alpha k_1^*]/2}$, which is the larger bound. This proves \eqref{eq:HighLowLowHighInteractionEstimate} in Case A.

 \textbf{Case B:} In case $j_1^* = j_3$, that is a high frequency carrying a high modulation, we use H\"older's inequality to find
 \begin{equation}
 \label{eq:HighLowLowHighIntermediateMultilinearEstimateII}
 \begin{split}
|\eqref{eq:QuadlinearExpressionFourierSpace}| &\lesssim \Vert g_2 \Vert_{L_{\tau}^2 \ell_n^2} 2^{j_2/2} 2^{l/2} \sup_{n_2,\tau_2} \int d\tau_1 \int d\tau_3 \sum_{n_3} |g_3(\tau_3,n_3)| \\
&\quad \sum_{n_1} |g_1(\tau_1,n_1)| |g_4(h_4,-n_1-n_2-n_3)|.
\end{split}
\end{equation}
We consider the set $E_{41} = \{ n_1 \in \Z \, | \, |h_4| \lesssim 2^{j_4} \}$. Since $\partial_{n_1} h_4 = 3(n_2+n_3)(n_2-n_4)$, we find $|\partial_{n_1} h_4| \gtrsim 2^{2k_1^*}$ and further $|E_{41}| \lesssim (1+ 2^{j_4-2k_1^*})^{1/2}$.

By repeated use of the Cauchy-Schwarz inequality, it follows
\begin{equation*}
\begin{split}
 \eqref{eq:HighLowLowHighIntermediateMultilinearEstimateII} &\lesssim 2^{j_2/2} 2^{l/2} \Vert g_2 \Vert_{L^2_{\tau} \ell^2_n} \sup_{\tau_2,n_2} \int d\tau_1 \int d\tau_3 \sum_{n_3} |g_3(\tau_3,n_3)| \\
 &\quad \quad \left( \sum_{n_1} |g_1(\tau_1,n_1)|^2 |g_4(h_4,-n_1-n_2-n_3)|^2 \right)^{1/2} \\
 &\lesssim 2^{j_2/2} 2^{l/2} (1+2^{j_4-2k_1^*})^{1/2} \Vert g_2 \Vert_{L^2_{\tau} \ell^2_n} \sup_{n_2,\tau_2} \int d\tau_1 \sum_{n_3} \left( \int d\tau_3 |g_3(\tau_3,n_3)|^2 \right)^{1/2} \\
 &\quad \quad \left( \sum_{n_1} |g_1(\tau_1,n_1)|^2 \Vert g_4(h_4,-n_1-n_2-n_3) \Vert_{L^2_{\tau_3}}^2 \right)^{1/2} \\
 &\lesssim 2^{j_2/2} 2^{l/2} (1+2^{j_4-2k_1^*})^{1/2} \Vert g_2 \Vert_{L^2_{\tau} \ell^2_{n}} \Vert g_3 \Vert_{L^2_{\tau} \ell^2_n} \Vert g_4 \Vert_{L^2_{\tau} \ell^2_n} \\
 &\quad \quad \int d\tau_1 \left( \sum_{n_1} |g_1(\tau_1,n_1)|^2 \right)^{1/2} \\
 &\lesssim 2^{j_1/2} 2^{j_2/2} 2^{l/2} (1+2^{j_4-2k_1^*})^{1/2} \prod_{i=1}^4 \Vert g_i \Vert_{L^2_{\tau} \ell^2_n}.
 \end{split}
 \end{equation*}
 The estimate \eqref{eq:HighLowLowHighInteractionEstimate} follows from the same considerations as in Case A.

  Clearly, an adapted computation shows the claim if $j_1^* = j_4$. The proof is complete. 
\end{proof}
The estimate for $High \times High \times Low \rightarrow Low$-interaction is related, but the minimal size of the support of the modulation variable is different.
\section{Short-time trilinear estimates}
\label{section:shorttimeTrilinearEstimates}
Our aim is to prove estimates of the following kind for all possible frequency interactions:
\begin{equation}
\label{eq:shorttimeFrequencyLocalizedEstimate}
\Vert P_{k_4} \mathcal{N}(u_1,u_2,u_3) \Vert_{N_{k_4}^\alpha} \lesssim \underbrace{D(\alpha,k_1,k_2,k_3,k_4)}_{D(\alpha,\underline{k})} \Vert u_1 \Vert_{F_{k_1}^{1/2-,\alpha}} \Vert u_2 \Vert_{F_{k_2}^{1/2-,\alpha}} \Vert u_3 \Vert_{F_{k_3}^{1/2-,\alpha}}
\end{equation}
In fact, the resonant interaction can be perceived as a special case of $High \times High \times High \rightarrow High$-interaction, see below. Hence, we only estimate the non-resonant part.

The trilinear estimate
\begin{equation}
\label{eq:shorttimeTrilinearEstimate}
\Vert \mathfrak{N}(u_1,u_2,u_3) \Vert_{N^{s,\alpha}(T)} \lesssim T^\theta \Vert u_1 \Vert_{F^{s,\alpha}(T)} \Vert u_2 \Vert_{F^{s,\alpha}(T)} \Vert u_3 \Vert_{F^{s,\alpha}(T)}
\end{equation}
then follows from splitting up the frequency support of the functions and Lemma \ref{lem:tradingModulationRegularity}. Note that it will be enough to estimate one function in \eqref{eq:shorttimeFrequencyLocalizedEstimate} with a modulation regularity slightly below $1/2$ to derive \eqref{eq:shorttimeTrilinearEstimate}.

Below, we only prove \eqref{eq:shorttimeFrequencyLocalizedEstimate} for $F_{k_i}^\alpha$-spaces in detail.
 The systematic modification to find \eqref{eq:shorttimeFrequencyLocalizedEstimate} to hold with one modulation regularity strictly less than $1/2$ follows from accepting a slight loss in the highest modulation.
 
We start with $High \times Low \times Low \rightarrow High$-interaction.
\begin{lemma}
\label{lem:ShorttimeHighLowLowHighInteraction}
Let $k_4 \geq 20$, $k_1 \leq k_2 \leq k_3-5$ and suppose that $P_{k_i} u_i = u_i$ for $i \in \{1,2,3\}$. Then, we find the estimate \eqref{eq:shorttimeFrequencyLocalizedEstimate} to hold with $D(\alpha,\underline{k}) = 2^{-(\alpha/2-\varepsilon)k_4}$ for any $\varepsilon > 0$.
\end{lemma}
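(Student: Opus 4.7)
The plan is to combine a duality reduction, integration by parts to transfer the $\partial_x$-derivative in $\mathcal{N}$ onto the test function, and a dyadic decomposition in the scale $|n_1+n_2|\sim 2^m$ that sharpens the resonance lower bound enough for the refined multilinear estimate Lemma~\ref{lem:RefinedHighLowLowHighMultilinearEstimate} to absorb the resulting derivative loss.

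First, by the definition of $N_{k_4}^\alpha$ I fix $t_4\in\mathbb{R}$ and bound the norm by the $X_{k_4}$-norm of $(\tau-\omega(n)+i2^{\alpha k_4})^{-1}\mathcal{F}_{t,x}[\eta_0(2^{\alpha k_4}(t-t_4))\,P_{k_4}\mathcal{N}(u_1,u_2,u_3)]$. For $i=1,2,3$ I set $v_i=\eta_0(2^{\alpha k_4}(t-t_4))u_i$; using $k_i\leq k_4$ and applying \eqref{eq:XkEstimateIII} (with the $F_{k_i}^\alpha$-window centered at $t_4$) one gets $\Vert\mathcal{F}(v_i)\Vert_{X_{k_i}}\lesssim\Vert u_i\Vert_{F_{k_i}^\alpha}$. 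Dualizing the $X_{k_4}$-norm and moving the $in$-factor in $\mathcal{F}\mathcal{N}$ onto the test function reduces the problem to estimating
\[
\sum_{j_4\geq 0}\frac{2^{j_4/2}}{\max(2^{j_4},2^{\alpha k_4})}\cdot 2^{k_4}\cdot\sup_{v_4}\Bigl|\int v_4\,v_1 v_2 v_3\,\chi_{n.r.}\,dt\,dx\Bigr|,
\]
with $v_4$ ranging over functions with $\mathrm{supp}(\tilde v_4)\subseteq D_{k_4,j_4}$ and $\Vert\tilde v_4\Vert_{L^2}=1$.

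Next I would exploit non-resonance. Using $n_1+n_2+n_3-n_4=0$ together with $a^3+b^3+c^3-(a+b+c)^3=-3(a+b)(b+c)(a+c)$, one has $\Omega=-3(n_1+n_2)(n_1+n_3)(n_2+n_3)$. From $|n_3|\sim|n_4|\sim 2^{k_4}$ the factors $(n_1+n_3),(n_2+n_3)$ are automatically of size $\sim 2^{k_4}$, while the non-resonant condition forces $|n_1+n_2|\geq 1$. Dyadically decomposing by $|n_1+n_2|\sim 2^m$ with $0\leq m\lesssim k_2$ gives $|\Omega|\sim 2^{2k_4+m}$, hence $j_1^*:=\max(j_1,j_2,j_3,j_4)\geq 2k_4+m-C$. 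On the $m$-th piece I would partition $v_3,v_4$ (and, if $k_i>m$, also $v_1,v_2$) into $n$-sub-intervals $J_i$ of length $2^m$; the constraints on $n_1+n_2$ and $n_3+n_4$ leave only $O(1)$ matching quadruples for each fixed $(J_1,J_3)$, and almost orthogonality reassembles the full $L^2_\tau\ell^2_n$-norms. Applying Lemma~\ref{lem:RefinedHighLowLowHighMultilinearEstimate} with $l=m$ and $k_1^*=k_4$, after dyadic decomposition in modulation, yields
\[
\Bigl|\int v_{1,j_1}^{J_1}v_{2,j_2}^{J_2}v_{3,j_3}^{J_3}v_{4,j_4}^{J_4}\,dt\,dx\Bigr|\lesssim 2^{m/2}\,2^{-j_1^*/2}\,2^{-\alpha k_4/2}\prod_{i=1}^4 2^{j_i/2}\Vert\tilde v_{i,j_i}^{J_i}\Vert_{L^2}.
\]

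The decisive cancellation is $2^{k_4}\cdot 2^{m/2}\cdot 2^{-(2k_4+m)/2}=1$, so the derivative loss is exactly absorbed by the resonance saving, independently of $m$. When $j_1^*\in\{j_1,j_2,j_3\}$, the sum in the maximizing index is handled via $\sum_{j\geq J}\Vert\tilde v_{i,j}\Vert_{L^2}\lesssim 2^{-J/2}\Vert\tilde v_i\Vert_{X_{k_i}}$; when $j_1^*=j_4$, the divisor $\max(2^{j_4},2^{\alpha k_4})^{-1}$ handles the outer sum in $j_4$. The remaining summations in the other $j_i$ and in $m\in\{0,\dots,O(k_2)\}$ produce at most a polynomial factor in $k_4$, which is absorbed into the slack $2^{\varepsilon k_4}$. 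What remains is the gain $2^{-\alpha k_4/2}$, giving $D(\alpha,\underline{k})=2^{-(\alpha/2-\varepsilon)k_4}$ as claimed. The main technical obstacle is the careful interleaving of the five dyadic summations $(j_1,j_2,j_3,j_4,m)$ with the case distinction on which index realizes $j_1^*$; relatedly, one must verify that multiplication by $\eta_0(2^{\alpha k_4}(t-t_4))$ spreads modulations only by $O(2^{\alpha k_4})$, which is dominated by the resonance level $2^{2k_4+m}$ since $\alpha\leq 2$.
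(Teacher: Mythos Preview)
Your proposal is correct and follows essentially the same approach as the paper's proof: both reduce to a duality argument, introduce the dyadic decomposition $|n_1+n_2|\sim 2^m$ (the paper calls this $l$) to sharpen the resonance bound $|\Omega|\sim 2^{2k_4+m}$, partition the low-frequency factors into intervals of length $2^m$ with almost-orthogonal recombination, and apply Lemma~\ref{lem:RefinedHighLowLowHighMultilinearEstimate} to obtain the key cancellation $2^{k_4}\cdot 2^{m/2}\cdot 2^{-(2k_4+m)/2}=1$. Your presentation is slightly more streamlined in that you dispense with the auxiliary $\gamma^3$-partition of unity (which is redundant here since $k_1^*\sim k_4$), and you organize the $j_4$-sum via a case split on which index realizes $j_1^*$ rather than the paper's range split $j_4\lessgtr 2k_1^*+l$; these are equivalent bookkeeping choices.
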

\begin{proof}
Let $\gamma: \R \rightarrow [0,1]$ be a smooth function with $\text{supp}(\gamma) \subseteq [-1,1]$ and 
\begin{equation*}
\sum_{m \in \Z} \gamma^3(x-m) \equiv 1.
\end{equation*}
We find the left-hand side in \eqref{eq:shorttimeFrequencyLocalizedEstimate} to be dominated by 
\begin{equation*}
\begin{split}
&C 2^{k_4} \sum_{m \in \Z} \sup_{t_{k_4} \in \R} \Vert (\tau - n^3 +i2^{[\alpha k_4]})^{-1} 1_{I_{k_4}}(n) \\
&\quad ( \mathcal{F}_{t,x}[\eta_0(2^{[\alpha k_4]}(t-t_{k_4})) \gamma(2^{[\alpha k_1^*]}(t-t_{k_4})-m) u_1] \\
&\quad \quad * \mathcal{F}_{t,x}[\gamma(2^{[\alpha k_1^*]}(t-t_{k_4})-m) u_2] * \mathcal{F}_{t,x}[\gamma(2^{[\alpha k_1^*]}(t-t_{k_4})-m) u_3])^{\mathfrak{N}} \Vert_{X_{k_4}}.
\end{split}
\end{equation*}

We observe that $\# \{ m \in \Z | \eta_0(2^{[\alpha k_4]}(\cdot -t_k)) \gamma(2^{[\alpha k_1^*]}(\cdot -t_k)-m) \neq 0 \} = \mathcal{O}(1)$. Consequently, it is enough to prove
\begin{equation*}
\begin{split}
&C 2^{k_4} \sup_{t_{k_4} \in \R} \Vert (\tau - n^3 +i2^{[\alpha k_4]})^{-1} 1_{I_{k_4}}(n) ( \mathcal{F}_{t,x}[\eta_0(\ldots) \gamma(2^{[\alpha k_1^*]}(t-t_k)-m) u_1] \\
&\quad \quad * \mathcal{F}_{t,x}[\gamma(2^{[\alpha k_1^*]}(t-t_k)-m) u_2] * \mathcal{F}_{t,x}[\gamma(2^{[\alpha k_1^*]}(t-t_k)) u_3])^{\mathfrak{N}} \Vert_{X_{k_4}} \\
&\lesssim_{\varepsilon} 2^{-(\alpha/2-\varepsilon)k_4} \Vert u_1 \Vert_{F_{k_1}^\alpha} \Vert u_2 \Vert_{F_{k_2}^\alpha} \Vert u_3 \Vert_{F_{k_3}^\alpha}.
\end{split}
\end{equation*}

We write $f_{k_i} = \mathcal{F}_{t,x}[\eta_0(2^{[\alpha k_4]}(t-t_k) \gamma(2^{[\alpha k_1^*]}(t-t_k)-m) u_i]$, and to denote additional localization in modulation, we use the notation
\begin{equation*}
f_{k_i,j_i} = 
\begin{cases}
	\eta_{\leq j_i}(\tau - n^3) f_{k_i}, \; j_i = [\alpha k_1^*], \\
	\eta_{j_i}(\tau - n^3) f_{k_i}, \; j_i > [\alpha k_1^*].
\end{cases}
\end{equation*}
By means of the definition of $F_{k_i}^\alpha$ and \eqref{eq:XkEstimateIII}, it is further enough to prove 
\begin{equation}
\label{eq:ReductionShorttimeHighLowLowHighEstimate}
\begin{split}
&\quad \sum_{j_4 \geq [\alpha k_4]} \sum_{j_1,j_2,j_3 \geq [\alpha k_1^*]} 2^{-j_4/2} \Vert 1_{D_{k_4,\leq j_4}} (f_{k_1,j_1} * f_{k_2,j_2} * f_{k_3,j_3})^{\mathfrak{N}} \Vert_{L^2_\tau \ell^2_n} \\
&\lesssim_{\varepsilon} 2^{-( \alpha/2 - \varepsilon ) k_1} \prod_{i=1}^3 \sum_{j_i \geq [\alpha k_	1^*]} 2^{j_i/2} \Vert f_{k_i,j_i} \Vert_{L^2_{\tau} \ell^2_n}.
\end{split}
\end{equation}

We see that \eqref{eq:ReductionShorttimeHighLowLowHighEstimate} follows from \eqref{eq:HighLowLowHighInteractionEstimate}. The resonance function, yielding a lower bound for $j_1^*$ in \eqref{eq:ReductionShorttimeHighLowLowHighEstimate}, is given by
\begin{equation*}
\Omega=(k_1+k_2+k_3)^3 - k_1^3 - k_2^3 - k_3^3 = 3(k_1+k_2)(k_1+k_3)(k_2+k_3).
\end{equation*}
Thus, $2^{2k_1^*} \lesssim |\Omega| \lesssim 2^{2k_1^*+k_3^*}$. To derive effective estimates, we localize $|\Omega| \sim 2^{2k_1+l}$. This is equivalent to prescribing $|k_1+k_2| \sim 2^l$, and the contribution to \eqref{eq:ReductionShorttimeHighLowLowHighEstimate} is denoted by
\begin{equation*}
\Vert P^l_{\Omega} 1_{D_{k_4,j_4}} (f_{k_1,j_1} * f_{k_2,j_2} * f_{k_3,j_3})^{\mathfrak{N}} \Vert_{L^2_{\tau} \ell^2_n}.
\end{equation*}

In the above display, we split the frequency support of $f_{k_1,j_1}$ into intervals of length $2^l$, that is $f_{k_1,j_1} = \sum_{I_1} f_{k_1,j_1}^{I_1}$. Due to localization of $\Omega$, this also gives a decomposition of $f_{k_2,j_2}$ so that the above display is dominated by
\begin{equation*}
\sum_{I_1,I_2} \Vert P^l_{\Omega} 1_{D_{k_4,j_4}} (f^{I_1}_{k_1,j_1} * f^{I_2}_{k_2,j_2} * f_{k_3,j_3})^{\mathfrak{N}} \Vert_{L^2_{\tau} \ell^2_n}.
\end{equation*}
Further, we split after decomposition in $0 \leq l \leq k_3^*$ the sum over $j_4$ into $j_4 \leq 2 k_1^* + l$ and $j_4 \geq 2k_1^* + l$. For fixed $l$, we find from \eqref{eq:HighLowLowHighInteractionEstimate}
\begin{equation*}
\begin{split}
&\quad 2^{k_4} \sum_{[\alpha k_4] \leq j_4 \leq 2k_1^* + l} \sum_{\substack{j_i \geq [\alpha k_1^*], \\ i =1,2,3}} 2^{-j_4/2} \sum_{I_1,I_2} \Vert P^l_{\Omega} 1_{D_{k_4,\leq j_4}} (f^{I_1}_{k_1,j_1} * f^{I_2}_{k_2,j_2} * f_{k_3,j_3})^{\mathfrak{N}} \Vert_{L^2_{\tau} \ell^2_n} \\
&\lesssim 2^{k_4} \sum_{[\alpha k_4] \leq j_4 \leq 2k_1^* + l} 2^{-j_4/2} \sum_{\substack{j_i \geq [\alpha k_1^*], \\ i =1,2,3}} 2^{-j_1^*/2} 2^{l/2} 2^{-[\alpha k_1]/2} 2^{j_4/2} \prod_{i=1}^3 2^{j_i/2} \Vert f_{k_i,j_i} \Vert_{L^2} \\
&\lesssim k_1^* 2^{-[\alpha k_1]/2} \prod_{i=1}^3 \sum_{j_i \geq [\alpha k_1^*]} 2^{j_i/2} \Vert f_{k_i,j_i} \Vert_{L^2_{\tau} \ell^2_n},
\end{split}
\end{equation*}
where $f^{I_i}_{k_i,j_i}$ for $i=1,2$ were reassembled to $f_{k_i,j_i}$ by Cauchy-Schwarz inequality.\\
For the second part $j_4 \geq 2k_1^* + l$, we just take $2^{-j_1^*/2} \leq 2^{-j_4/2}$ to find in a similar spirit
\begin{equation*}
\begin{split}
&\quad 2^{k_4} \sum_{j_4 \geq 2k_1^* + l} \sum_{\substack{j_i \geq [\alpha k_1^*], \\ i =1,2,3}} 2^{-j_4/2} \sum_{I_1,I_2} \Vert P^l_{\Omega} 1_{D_{k_4,\leq j_4}}
(f^{I_1}_{k_1,j_1} * f^{I_2}_{k_2,j_2} * f_{k_3,j_3})^{\mathfrak{N}} \Vert_{L^2_{\tau} \ell^2_n} \\
&\lesssim 2^{k_4} \sum_{j_4 \geq 2k_1^* + l} 2^{-j_4/2} 2^{l/2} 2^{-[\alpha k_1]/2} \prod_{i=1}^3 2^{j_i/2} \Vert f_{k_i,j_i} \Vert_{L^2} \\
&\lesssim 2^{-[\alpha k_1]/2} \prod_{i=1}^3 \sum_{j_i \geq [\alpha k_1^*]} 2^{j_i/2} \Vert f_{k_i,j_i} \Vert_{L^2_{\tau} \ell^2_n}.
\end{split}
\end{equation*}

An estimate with one modulation size strictly less than $1/2$ follows from slight loss in the highest modulation. We omit the details. The proof is complete. 
\end{proof}
We turn to $High \times High \times Low \rightarrow High$-interaction.
\begin{lemma}
\label{lem:ShorttimeHighHighLowHighInteraction}
Let $k_4 \geq 20$, $k_1 \leq k_2 \leq k_3$, $k_1 \leq k_2-15$ and $|k_2-k_4| \leq 10$ and suppose that $P_{k_i} u_i = u_i$ for $i \in \{1,2,3\}$. Then, we find estimate \eqref{eq:shorttimeFrequencyLocalizedEstimate} to hold with $D(\alpha,\underline{k}) = 2^{-(1/2-\varepsilon) k_4}$ for any $\varepsilon >0$.
\end{lemma}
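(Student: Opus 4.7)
The plan is to adapt the proof of Lemma \ref{lem:ShorttimeHighLowLowHighInteraction}, exploiting the much stronger resonance available in this frequency regime. From $n_1+n_2+n_3 = n_4$, $|n_1|\lesssim 2^{k_1} \leq 2^{k_2-15}$ and $|k_2-k_4|\leq 10$, frequency conservation first forces $k_3 \sim k_4$, so that $k_2$, $k_3$, $k_4$ are mutually comparable. The resonance function then factors as
\begin{equation*}
\Omega = n_4^3 - n_1^3 - n_2^3 - n_3^3 = 3(n_1+n_2)(n_1+n_3)(n_2+n_3),
\end{equation*}
and each factor has absolute value $\sim 2^{k_4}$. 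Hence $|\Omega|\sim 2^{3k_4}$, which forces the maximum modulation $j_1^* := \max(j_1,j_2,j_3,j_4)$ to satisfy $j_1^* \geq 3k_4 - O(1)$. This gain of $2^{-3k_4/2}$ on the largest modulation is strictly stronger than the $2^{-k_1^*}$-scale gain used in the preceding lemma.

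Following the partition-of-unity step in the proof of Lemma \ref{lem:ShorttimeHighLowLowHighInteraction}, I would time-localize each $u_i$ at the scale $2^{-[\alpha k_4]}$ (since $k_1^* = k_4$) and reduce the claim to bounding
\begin{equation*}
2^{k_4} \sum_{j_4 \geq [\alpha k_4]} 2^{-j_4/2} \sum_{j_1,j_2,j_3 \geq [\alpha k_4]} \Vert 1_{D_{k_4,\leq j_4}} (f_{k_1,j_1} \ast f_{k_2,j_2} \ast f_{k_3,j_3})^{\mathfrak{N}}\Vert_{L^2_\tau \ell^2_n}
\end{equation*}
by $2^{-(1/2-\varepsilon)k_4} \prod_{i=1}^3 \sum_{j_i} 2^{j_i/2} \Vert f_{k_i,j_i}\Vert_{L^2_\tau \ell^2_n}$, the prefactor $2^{k_4}$ originating from the derivative in $\mathcal{N}$. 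The key multilinear input is the $L^6$ Strichartz estimate \eqref{eq:multilinearL6StrichartzEstimate}: since three of the four frequencies are comparable to $2^{k_4}$, one has $k_3^* \sim k_4$, and by duality
\begin{equation*}
\Vert 1_{D_{k_4,\leq j_4}} (f_{k_1,j_1} \ast f_{k_2,j_2} \ast f_{k_3,j_3})^{\mathfrak{N}}\Vert_{L^2_\tau \ell^2_n} \lesssim 2^{\varepsilon k_4} \, 2^{-j_1^*/2} \, 2^{j_4/2} \prod_{i=1}^3 2^{j_i/2} \Vert f_{k_i,j_i}\Vert_{L^2_\tau \ell^2_n}.
\end{equation*}
Substituting, the $2^{\pm j_4/2}$ factors cancel; combining $j_1^* \geq 3k_4 - O(1)$ with the summation over $j_4$ (which produces at most a polynomial factor in $k_4$) yields an overall prefactor $2^{k_4}\cdot 2^{\varepsilon k_4}\cdot 2^{-3k_4/2} \lesssim 2^{-(1/2-\varepsilon')k_4}$ for any $\varepsilon' > \varepsilon$, matching the desired bound.

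I expect the principal technical obstacle to be the combinatorial book-keeping in the modulation sums: depending on whether $j_1^*$ is attained at $j_4$ or at one of $j_1, j_2, j_3$, the summation must be organized slightly differently, paralleling the Case A / Case B dichotomy in the proof of Lemma \ref{lem:RefinedHighLowLowHighMultilinearEstimate}, although all cases ultimately deliver the same bound. The refinement to a short-time norm with one modulation regularity strictly below $1/2$, needed to derive the full trilinear estimate \eqref{eq:shorttimeTrilinearEstimate}, follows by absorbing a small power $2^{\delta j_1^*}$ into the resonance gain $2^{-j_1^*/2}$, exactly as indicated at the end of the proof of Lemma \ref{lem:ShorttimeHighLowLowHighInteraction}.
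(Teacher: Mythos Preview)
Your proposal is correct and follows essentially the same approach as the paper's own proof: both reduce to the modulation sum via the standard time-localization, invoke the $L^6$-based multilinear estimate \eqref{eq:multilinearL6StrichartzEstimate}, and exploit the resonance bound $j_1^* \geq 3k_1^* - O(1)$ to gain $2^{-3k_4/2}$, which after summing over $j_4$ (split at $j_4 \sim 3k_1^*$) yields the claimed $2^{-(1/2-\varepsilon)k_4}$. Your reference to the Case~A/Case~B dichotomy of Lemma~\ref{lem:RefinedHighLowLowHighMultilinearEstimate} is slightly beside the point---the paper simply uses \eqref{eq:multilinearL6StrichartzEstimate} as a black box and splits only on whether $j_4$ lies below or above $3k_1^*$---but this is a presentational difference, not a substantive one.
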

\begin{proof}
By the reductions and notation from above, we have to prove
\begin{equation}
\label{eq:ReductionShorttimeHighHighLowHighEstimate}
\begin{split}
&\quad 2^{k_4} \sum_{j_4 \geq [\alpha k_4]} 2^{-j_4/2} \sum_{j_1,j_2,j_3 \geq [\alpha k_1^*]} \Vert 1_{D_{k_4,\leq j_4}} (f_{k_1,j_1} * f_{k_2,j_2} * f_{k_3,j_3})^{\mathfrak{N}} \Vert_{L^2_{\tau} \ell^2_n} \\
&\lesssim_\varepsilon 2^{-(1/2-\varepsilon) k_4} \prod_{i=1}^3 \sum_{j_i \geq [\alpha k_1^*]} 2^{j_i/2} \Vert f_{k_i,j_i} \Vert_{L^2_{\tau} \ell^2_n}.
\end{split}
\end{equation}
We use \eqref{eq:multilinearL6StrichartzEstimate} to find
\begin{equation*}
\Vert 1_{D_{k_4, \leq j_4}} (f_{k_1,j_1} * f_{k_2,j_2} * f_{k_3,j_3})^{\mathfrak{N}} \Vert_{L^2_{\tau} \ell^2_n} \lesssim_{\varepsilon} 2^{-j_1^*/2} 2^{\varepsilon k_1^*}  2^{j_4/2} \prod_{i=1}^3 2^{j_i/2} \Vert f_{k_i,j_i} \Vert_{L^2_{\tau} \ell^2_n}.
\end{equation*}

We find from the resonance relation that $j_1^* \geq 3 k_1^*-15$.\\
Now the estimate follows in a similar spirit to the computation above. Splitting up the sum over $j_4$ into $[\alpha k_4] \leq j_4 \leq 3k_1^*$ and $j_4 \geq 3k_1^*$, we find
\begin{equation*}
\begin{split}
&\quad 2^{k_4} \sum_{[\alpha k_4] \leq j_4 \leq 3k_1^*} 2^{-j_4/2} \sum_{\substack{j_i \geq [\alpha k_1^*], \\ i =1,2,3}} 2^{(\varepsilon k_1^* /2)} 2^{-3k_1^*/2} 2^{j_4/2} \prod_{i=1}^3 2^{j_i/2} \Vert f_{k_i,j_i} \Vert_{L^2_{\tau} \ell^2_n} \\
&\lesssim_{\varepsilon} k_1^* 2^{-k_1^*/2 + (\varepsilon/2) k_1^*} \prod_{i=1}^3 \sum_{j_i \geq [\alpha k_1^*]} 2^{j_i/2} \Vert f_{k_i,j_i} \Vert_{L^2_{\tau} \ell^2_n} \\
&\lesssim_{\varepsilon} 2^{-(1/2 - \varepsilon)k_4} \prod_{i=1}^3 \sum_{j_i \geq [\alpha k_1^*]} 2^{j_i/2} \Vert f_{k_i,j_i} \Vert_{L^2_{\tau} \ell^2_n}.
\end{split}
\end{equation*}

For the remaining part we argue like above
\begin{equation*}
\begin{split}
&\quad 2^{k_4} \sum_{j_4 \geq 3k_1^*} 2^{-j_4/2} \sum_{{\substack{j_i \geq [\alpha k_1^*], \\ i =1,2,3}}} 2^{(\varepsilon/2) k_1^*} \prod_{i=1}^3 2^{j_i/2} \Vert f_{k_i,j_i} \Vert_{L^2_{\tau} \ell^2_n} \\
&\lesssim 2^{-k_4/2 + \varepsilon k_4}  \prod_{i=1}^3 \sum_{j_i \geq [\alpha k_1^*]} 2^{j_i/2} \Vert f_{k_i,j_i} \Vert_{L^2_{\tau} \ell^2_n},
\end{split}
\end{equation*}
and \eqref{eq:ReductionShorttimeHighHighLowHighEstimate} follows. The variant with one function in a strictly less modulation regularity than $1/2$ follows from the same considerations like in the previous lemma. This finishes the proof. 
\end{proof}
We turn to $High \times High \times High \rightarrow High$-interaction, where we do not use a multilinear argument, but only the bilinear estimate from Lemma \ref{lem:bilinearEstimate}. In the special case $\alpha=1$, this is precisely the analysis from \cite{Molinet2012}. The computation additionally points out that this interaction can be estimated in negative Sobolev spaces for $\alpha>1$. 
\begin{lemma}
\label{lem:ShorttimeHighHighHighHighInteraction}
Let $k_4 \geq 50$ and $|k_i - k_1| \leq 20$ for any $i =2,3,4$ and suppose that $P_{k_i} u_i = u_i$ for $i =1,2,3$. Then, we find \eqref{eq:shorttimeFrequencyLocalizedEstimate} to hold with $D(\alpha,\underline{k}) = 2^{-(\alpha/2-1/2)k_4}$ whenever $\alpha  \geq 1$.
\end{lemma}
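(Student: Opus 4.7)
The plan is to reduce via the standard scheme---partition of unity in time $\sum_m \gamma^3(2^{[\alpha k_4]}(t-t_{k_4})-m) \equiv 1$, application of \eqref{eq:XkEstimateIII}, and unfolding the definition of $N^{\alpha}_{k_4}$---to the modulation-dyadic inequality
\begin{equation*}
2^{k_4}\!\!\sum_{j_4 \geq [\alpha k_4]}\!\! 2^{-j_4/2}\Vert 1_{D_{k_4,\leq j_4}}(f_{k_1,j_1} * f_{k_2,j_2} * f_{k_3,j_3})^{\mathfrak{N}}\Vert_{L^2_\tau \ell^2_n} \lesssim 2^{-(\alpha/2-1/2)k_4}\prod_{i=1}^3 2^{j_i/2}\Vert f_{k_i,j_i}\Vert_{L^2_\tau \ell^2_n}
\end{equation*}
for each choice of dyadic modulations $j_1, j_2, j_3 \geq [\alpha k_4]$; summation over the $j_i$ will then yield the frequency-localized bound.

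First, I shall analyze the Airy resonance function $\Omega = 3(n_1+n_2)(n_1+n_3)(n_2+n_3)$. Under the constraint $n_1+n_2+n_3 = n$ with $|n|, |n_i| \sim 2^{k_4}$, the pairwise sums $s_{ij}:=n_i+n_j$ satisfy $s_{12}+s_{13}+s_{23}=2n$ of magnitude $\sim 2^{k_4}$, forcing at least two of the $|s_{ij}|$ to be $\gtrsim 2^{k_4}$. Consequently $|\Omega| \gtrsim 2^{2k_4}$, and via the identity $\tau - n^3 + \Omega = \sum_i (\tau_i - n_i^3)$ (with each summand of modulus $\leq 2^{j_i}$) we deduce $\max(j_1, j_2, j_3, j_4) \gtrsim 2k_4$. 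After a preliminary case split on which pair is large, we may assume say $|s_{12}| \gtrsim 2^{k_4}$, which makes Lemma \ref{lem:bilinearEstimate} applicable with $k \sim k_4$.

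Second, relabeling so that $j_1 \leq j_2$ and applying Lemma \ref{lem:bilinearEstimate} with $k = k_4 - C$ gives
\begin{equation*}
\Vert 1_{|n_1+n_2|\gtrsim 2^{k_4}}(f_{k_1,j_1} * f_{k_2,j_2})\Vert_{L^2_\tau \ell^2_n} \lesssim 2^{j_1/2}\bigl(2^{(j_2-k_4)/4}+1\bigr)\Vert f_{k_1,j_1}\Vert_{L^2}\Vert f_{k_2,j_2}\Vert_{L^2}.
\end{equation*}
Combined with Young's inequality $\Vert a * b\Vert_{L^2\ell^2} \leq \Vert a\Vert_{L^2\ell^2}\Vert b\Vert_{L^1\ell^1}$ and the crude support bound $\Vert f_{k_3,j_3}\Vert_{L^1\ell^1} \lesssim 2^{k_4/2}\,2^{j_3/2}\Vert f_{k_3,j_3}\Vert_{L^2}$, this saves a factor of $2^{k_4/2}$ over the trivial three-fold Young estimate. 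The sum over $j_4 \geq [\alpha k_4]$ is then split into two regimes: if $\max(j_1, j_2, j_3) \geq \log_2|\Omega|$ the sum is controlled by $\sum 2^{-j_4/2} \lesssim 2^{-\alpha k_4/2}$, while in the complementary regime the triple-convolution support in the output modulation is concentrated near $|\Omega|$, so $\sum 2^{-j_4/2} \lesssim 2^{-k_4}$. Assembling these factors yields the target $2^{-(\alpha/2-1/2)k_4}$ precisely when $\alpha \geq 1$, since the hypothesis furnishes the residual factor $2^{(\alpha-1)k_4/2}$ via the short-time threshold $j_i \geq [\alpha k_4]$.

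The principal technical obstacle is the fine bookkeeping across the above case splits---which pair of frequencies carries the large sum, which modulation dominates, which $j_4$-regime applies---and the exact interaction between the bilinear gain $2^{-k_4/2}$, the resonance-forced lower bound on the maximum modulation, and the short-time modulation threshold. The variant with one modulation regularity strictly less than $1/2$ is obtained, as in the previous two lemmas, by accepting a slight loss in the highest modulation.
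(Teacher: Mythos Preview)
Your resonance claim is incorrect, and this breaks the argument. From $s_{12}+s_{13}+s_{23}=2n$ you can only conclude that \emph{one} of the pairwise sums has size $\gtrsim 2^{k_4}$, not two. For instance, with $n_1=n_2=N$ and $n_3=-(N-1)$ one has $n=N+1$, $n_1+n_2=2N$, but $n_1+n_3=n_2+n_3=1$, so $|\Omega|=6N\sim 2^{k_4}$, not $2^{2k_4}$. Consequently the forced lower bound on the maximal modulation is only $\max_i j_i \gtrsim k_4$, which for $\alpha\geq 1$ is no stronger than the short-time threshold $j_i\geq[\alpha k_4]$ already in place.

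With the correct bound $|\Omega|\gtrsim 2^{k_4}$, your scheme of one bilinear estimate plus a crude $L^1_\tau\ell^1_n$ bound on the third factor does not close. In the worst case $j_1=j_2=j_3=[\alpha k_4]$, the single bilinear yields no gain over $\prod_i 2^{j_i/2}\Vert f_{k_i,j_i}\Vert_2$, and since $|\Omega|$ can be as small as $2^{k_4}$ the output modulation is not further restricted, so the $j_4$-sum contributes only $2^{-\alpha k_4/2}$. Multiplying by the derivative $2^{k_4}$ leaves $2^{(1-\alpha/2)k_4}$, which is off from the target $2^{(1/2-\alpha/2)k_4}$ by a full factor of $2^{k_4/2}$. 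The paper recovers this missing factor by passing to duality and applying \emph{two} bilinear estimates: after splitting each $u_i$ into $P_\pm u_i$, two of the four factors must share a sign, so their frequency sum is $\sim 2^{k_4}$; the remaining two then automatically have frequency sum of the same size (the two sums are negatives of each other), so Lemma~\ref{lem:bilinearEstimate} applies to both pairs. This yields the bound $2^{-k_4/2}2^{j_4/4}2^{-\alpha k_4/4}\prod_i 2^{j_i/2}\Vert f_{k_i,j_i}\Vert_2$, after which summation over $j_4\geq[\alpha k_4]$ gives exactly the claimed $D(\alpha,\underline{k})$.
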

\begin{proof}
The usual reduction steps lead us to the remaining estimate
\begin{equation*}
\begin{split}
&\quad \sum_{j_4 \geq [\alpha k_4]} 2^{-j_4/2} 2^{k_4} \sum_{j_1,j_2,j_3 \geq [\alpha k_1^*]} \Vert 1_{D_{k_4,\leq j_4}} (f_{k_1,j_1} * f_{k_2,j_2} * f_{k_3,j_3})^{\mathfrak{N}} \Vert_{L^2_{\tau} \ell^2_n} \\
&\lesssim 2^{-(\alpha/2-1/2)k_4} \prod_{i=1}^3 \sum_{j_i \geq [\alpha k_1^*]} 2^{j_i/2} \Vert f_{k_i,j_i} \Vert_{L^2_{\tau} \ell^2_n}.
\end{split}
\end{equation*}

We use duality to write
\begin{equation*}
\Vert 1_{D_{k_4,\leq j_4}} (f_{k_1,j_1} * f_{k_2,j_2} * f_{k_3,j_3}) \Vert_{L^2_{\tau} \ell^2_n} = \sup_{\Vert u_4 \Vert_{L^2_{t,x} = 1}} \int \int u_1 u_2 u_3 u_4 dx dt,
\end{equation*}
where $u_i = \mathcal{F}_{t,x}^{-1}[f_{k_i,j_i}]$ for $i=1,2,3$.

After splitting the expression according to $P_{\pm} u_i$, where $P_{\pm}$ projects to only positive, respectively negative frequencies, it is easy to see that two bilinear estimates are applicable.\\
Indeed, the same sign must appear twice, which is amenable to \eqref{eq:BilinearEstimate} as the output frequency must be of size $2^{k_1^*}$, and the two remaining factors are also amenable to a bilinear estimate.\\
Say we can apply bilinear estimates to $u_4 u_1$ and $u_2 u_3$. This gives
\begin{equation*}
\begin{split}
&\quad \Vert 1_{D_{k_4,\leq j_4}} (f_{k_1,j_1} * f_{k_2,j_2} * f_{k_3,j_3}) \Vert_{L^2_{\tau} \ell^2_n} \\
&\lesssim 2^{j_1/2} 2^{(j_4-k_4)/4} 2^{j_2/2} 2^{(j_3-k_4)/4} \prod_{i=1}^3 \Vert f_{k_i,j_i} \Vert_{L^2_{\tau} \ell^2_n} \\
&\lesssim 2^{-k_4/2} 2^{j_4/4} 2^{-\alpha k_4/4} \prod_{i=1}^3 2^{j_1/2} \Vert f_{k_i,j_i} \Vert_{L^2_{\tau} \ell^2_n}.
\end{split}
\end{equation*}
The claim follows after summation over $j_4$. The proof is complete. 
\end{proof}
Next, we deal with $High \times High \times Low \rightarrow Low$-interaction:
\begin{lemma}
\label{lem:ShorttimeHighHighLowLowInteraction}
Let $k_3 \geq 20$, $k_1 \leq k_2 \leq k_3$, $k_1 \leq k_2 -5$, $k_4 \leq k_2-5$ and suppose that $P_{k_i} u_i = u_i$ for $i =1,2,3$. Then, we find \eqref{eq:shorttimeFrequencyLocalizedEstimate} to hold with $D(\alpha,\underline{k}) = 2^{(\alpha/2-1+\varepsilon)k_1} 2^{(1-\alpha)k_4}$ for any $\varepsilon > 0$.
\end{lemma}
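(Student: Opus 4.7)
The overall strategy parallels Lemmas \ref{lem:ShorttimeHighLowLowHighInteraction}--\ref{lem:ShorttimeHighHighHighHighInteraction}, but the essential new feature is that the output frequency $2^{k_4}$ is strictly lower than the largest input frequency $2^{k_2}$ (by frequency conservation, $k_2 \sim k_3 =: k_1^*$), so the output time scale $2^{-[\alpha k_4]}$ is coarser than the finest input time scale $2^{-[\alpha k_1^*]}$. First, I would decompose the characteristic window $\eta_0(2^{[\alpha k_4]}(t - t_{k_4}))$ via a smooth partition of unity $\sum_m \gamma^3(2^{[\alpha k_1^*]}(t - t_{k_4}) - m) \equiv 1$. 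Only $\sim 2^{\alpha(k_1^* - k_4)}$ subintervals contribute, which introduces a multiplicative loss of exactly this size when we use the triangle inequality in $X_{k_4}$. This is the extra factor absent from Lemmas \ref{lem:ShorttimeHighLowLowHighInteraction}--\ref{lem:ShorttimeHighHighHighHighInteraction}, where the output was comparable to the largest frequency.

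Next, I would pass to Fourier side and reduce (via \eqref{eq:XkEstimateIII} and the definition of $F^\alpha_{k_i}$) to showing
\begin{equation*}
2^{\alpha(k_1^* - k_4)} \cdot 2^{k_4} \sum_{j_4 \geq [\alpha k_4]} 2^{-j_4/2} \sum_{j_1,j_2,j_3 \geq [\alpha k_1^*]} \Vert 1_{D_{k_4,\leq j_4}} (f_{k_1,j_1} * f_{k_2,j_2} * f_{k_3,j_3})^{\mathfrak{N}} \Vert_{L^2_\tau \ell^2_n}
\end{equation*}
is bounded by $D(\alpha,\underline{k})$ times the product of $\sum_{j_i} 2^{j_i/2} \Vert f_{k_i,j_i}\Vert$. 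The resonance identity gives $|\Omega| = 3|n_1+n_2||n_1+n_3||n_2+n_3|$ with $|n_1+n_2|, |n_1+n_3| \sim 2^{k_2}$ and $|n_2+n_3| = |n-n_1| \lesssim 2^{\max(k_1,k_4)}$, hence $2^{2k_2} \lesssim |\Omega| \lesssim 2^{2k_2 + \max(k_1,k_4)}$ and $j_1^* \geq 2k_2 + O(1)$.

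For the multilinear factor, I would use duality to rewrite the $L^2_\tau \ell^2_n$-norm as $\int u_1 u_2 u_3 u_4$ and apply an adapted version of Lemma \ref{lem:RefinedHighLowLowHighMultilinearEstimate}. The observation is that by the symmetry of the quadrilinear integral, our setting is equivalent (up to relabeling) to the refined lemma's hypothesis: two low-frequency factors (now $u_1$ at $2^{k_1}$ and $u_4$ at $2^{k_4}$) and two high-frequency factors (at $2^{k_2} \sim 2^{k_3}$). Setting the frequency-window size to $l = \max(k_1, k_4)$ (so that $|J_i| \lesssim 2^l$ is automatic) and re-running the argument with the variable exchange $\partial_{n_j} h = 3(n_i + n_k)(n_j - n_\ell) \sim 2^{2k_2}$ yields the quadrilinear bound with $M = 2^{\max(k_1,k_4)/2} \cdot 2^{-j_1^*/2} \cdot 2^{-[\alpha k_1^*]/2}$. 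Substituting into the sum and splitting the $j_4$-range at $2k_2$ (as in Lemma \ref{lem:ShorttimeHighLowLowHighInteraction}) produces two geometric series whose dominant contributions combine the resonance gain $2^{-k_2}$, the time-partition loss $2^{\alpha(k_2-k_4)}$, the derivative weight $2^{k_4}$, the short-time gain $2^{-\alpha k_2/2}$, and the orthogonality gain $2^{\max(k_1,k_4)/2}$. These must organize themselves, up to a logarithmic $k_1^*$ which is absorbed into the $2^{\varepsilon k_1}$ cushion, into the stated $D(\alpha,\underline{k}) = 2^{(\alpha/2-1+\varepsilon)k_1} 2^{(1-\alpha)k_4}$.

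The main obstacle I anticipate is that the bound $D(\alpha,\underline{k})$ must be completely independent of the high frequency $k_2 \sim k_3$, whereas several intermediate steps produce $k_2$-dependent factors (most severely the time-partition loss $2^{\alpha(k_2-k_4)}$, which for $\alpha > 1$ grows faster than the naive resonance gain $2^{-k_2}$ can compensate). Extracting the extra $2^{-\alpha k_2/2}$ from the $[\alpha k_1^*]/2$-factor in $M$, which itself comes from the adapted refinement in Lemma \ref{lem:RefinedHighLowLowHighMultilinearEstimate}, is therefore essential: it is precisely this smoothing from the frequency-dependent time localization that makes the cancellation of $k_2$-dependence possible. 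Handling the case $j_1^* = j_4$ separately (where part of the gain transfers from the input into the output modulation) and managing the $\ell^2$-summation over the $2^l$-sized frequency decomposition of one high-frequency input via Cauchy-Schwarz are the technical bookkeeping steps I expect to be most error-prone.
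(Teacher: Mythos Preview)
Your overall architecture matches the paper's proof: the extra time-localization step (incurring the factor $2^{\alpha(k_1^*-k_4)}$), the reduction to the quadrilinear integral via duality, the application of the refined estimate of Lemma~\ref{lem:RefinedHighLowLowHighMultilinearEstimate}, and the split of the $j_4$-sum are all exactly what the paper does. The anticipated obstacle --- that all $k_2$-dependence must cancel --- is also correctly identified.

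There is, however, a genuine gap in the handling of the frequency-window parameter $l$. You propose to take $l=\max(k_1,k_4)$ on the grounds that ``$|J_i|\lesssim 2^l$ is automatic'', but this is only automatic for the two \emph{low}-frequency factors. In Case~A of Lemma~\ref{lem:RefinedHighLowLowHighMultilinearEstimate} (i.e.\ when $j_1^*$ falls on $u_1$ or $u_4$), the factor $2^{l/2}$ in $M$ arises from the direct Cauchy--Schwarz sum over a \emph{high}-frequency variable, so without further decomposition the relevant window has size $2^{k_2}$, not $2^{\max(k_1,k_4)}$. Taking $l=\max(k_1,k_4)$ while only using the crude resonance bound $j_1^*\geq 2k_2$ then leaves an uncancelled factor $2^{\max(k_1,k_4)/2}$, and the estimate does not close to the stated $D(\alpha,\underline{k})$.

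The paper's remedy --- which you do not articulate --- is a further dyadic decomposition in the resonance size: one localizes $|\Omega|\sim 2^{2k_1^*+l}$ for $0\leq l\leq k_3^*$. This forces $|n_2+n_3|\sim 2^l$, and after splitting $f_{k_1}$ into intervals of length $2^l$ one obtains an almost-orthogonal decomposition of the output and may also assume $\operatorname{supp}_n f_{k_2},\operatorname{supp}_n f_{k_3}$ lie in intervals of length $2^l$. Crucially, the same localization upgrades the resonance bound to $j_1^*\geq 2k_1^*+l$, so the $2^{l/2}$ loss from the window cancels against $2^{-l/2}$ from the improved $j_1^*$; summing over $l$ then costs only a logarithm absorbed into $2^{\varepsilon k_1^*}$. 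Your final sentence hints at an $\ell^2$-summation over pieces of a high-frequency input, but without the $\Omega$-localization there is no almost-orthogonality to exploit, and a plain Cauchy--Schwarz would reinstate the loss. This $l$-decomposition is the missing idea.
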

\begin{proof}
Contrary to the previous cases, we have to add localization in time to estimate $u_{k_i}$ in $F^{\alpha}_{k_i}$ for $i=2,3$.\\
For this purpose let $\gamma: \R \rightarrow [0,1]$ be a smooth function supported in $[-1,1]$ with the property
\begin{equation*}
\sum_{m \in \Z} \gamma^3(x-m) \equiv 1.
\end{equation*}
We find the left-hand side to be dominated by
\begin{equation*}
\begin{split}
\sum_{|m| \lesssim 2^{\alpha(k_1-k_4)}} C 2^{k_4} \sup_{t_{k_4} \in \R} \Vert \mathcal{F}_{t,x}[u_1 \eta_0(2^{\alpha k_4}(t-t_{k_4})) \gamma(2^{\alpha k_1}(t-t_{k_4})-m)] * \\
\mathcal{F}_{t,x}[u_2 \gamma(2^{\alpha k_1}(t-t_{k_4})-m)] * \mathcal{F}_{t,x}[u_3 \gamma(2^{\alpha k_1}(t-t_{k_4})-m)] \Vert_{X_{k_4}}.
\end{split}
\end{equation*}

With the additional localization in time available, we can annex the modulations for $j_i \leq [\alpha k_1^*], \; i=1,2,3$ and denote $f_{k_i} = \mathcal{F}_{t,x}[u_i \gamma(2^{k_1}(t-t_{k_4})-m)]$. Additional localization is denoted by
\begin{equation*}
f_{k_i,j_i} = 
\begin{cases}
	\eta_{\leq j_i}(\tau - n^3) f_{k_i}, \; j_i = [\alpha k_1^*], \\
	\eta_{j_i}(\tau - n^3) f_{k_i}, \; j_i > [\alpha k_1^*].
\end{cases}
\end{equation*}
By the above reductions, we have to prove
\begin{equation*}
\begin{split}
&\quad 2^{\alpha(k_3-k_4)} 2^{k_4} \sum_{j_4 \geq [\alpha k_4]} 2^{-j_4/2} \sum_{j_1,j_2,j_3 \geq [\alpha k_1^*]} \Vert 1_{D_{k_4,\leq j_4}} (f_{k_1,j_1} * f_{k_2,j_2} * f_{k_3,j_3})^{\mathfrak{N}} \Vert_{L^2_{\tau} \ell^2_n} \\
&\lesssim 2^{(\alpha/2-1+\varepsilon)k_3} 2^{(1-\alpha)k_4} \prod_{i=1}^3 \sum_{j_i \geq [\alpha k_1^*]} 2^{j_i/2} \Vert f_{k_i,j_i} \Vert_{L^2_{\tau} \ell^2_n}.
\end{split}
\end{equation*}

As in the proof of Lemma \ref{lem:ShorttimeHighLowLowHighInteraction}, the resonance is localized to
\begin{equation*}
2^{2k_1^*} \lesssim |\Omega| \lesssim 2^{2k_1^*+k_3^*},
\end{equation*}
and we introduce additional localization $P^l_{\Omega}$ for $|\Omega| \sim 2^{2k_1^*+l}$, where $0 \leq l \leq k_3^*$. Correspondingly, we decompose $f_{k_1,j_1}$ into intervals $I$ of length $2^l$, which allows an almost orthogonal decomposition of the output. At this point, by convolution constraint and almost orthogonality, we can suppose that $\text{supp}_n(f_{k_2,j_2})$ and $\text{supp}_n(f_{k_3,j_3})$ are intervals of length $2^l$.\\
Lastly, we split the sum over $j_4$ into $j_4 \leq 2k_1^* + l$ and $j_4 \geq 2k_1^* + l$. For fixed $l$ we find from \eqref{eq:HighLowLowHighInteractionEstimate}
\begin{equation*}
\begin{split}
&\quad 2^{\alpha (k_3-k_4)} 2^{k_4} \sum_{[\alpha k_4] \leq j_4 \leq 2k_1^* + l} \sum_{\substack{j_i \geq [\alpha k_1^*], \\ i =1,2,3}} 2^{-j_4/2} \\
&\quad \quad \left( \sum_{I_1,I_4} \Vert P^l_{\Omega} 1_{D^{I_4}_{k_4,\leq j_4}} (f^{I_1}_{k_1,j_1} * f_{k_2,j_2} * f_{k_3,j_3} )^{\mathfrak{N}} \Vert^2_{L^2_{\tau} \ell^2_n} \right)^{1/2} \\
&\lesssim 2^{\alpha k_3} 2^{(1-\alpha)k_4} \sum_{[\alpha k_4] \leq j_4 \leq 2k_1^* +l} 2^{-j_4/2} \sum_{\substack{j_i \geq [\alpha k_1^*], \\ i =1,2,3}} 2^{-j_1^*/2} 2^{l/2} 2^{-[\alpha k_3]/2} 2^{j_4/2} \\
&\quad \quad \prod_{i=1}^3 2^{j_i/2} \Vert f_{k_i,j_i} \Vert_2 \\
&\lesssim k_1^* 2^{\alpha k_3/2} 2^{(1-\alpha)k_4} 2^{-k_3} \prod_{i=1}^3 \sum_{j_i \geq [\alpha k_1^*]} 2^{j_i/2} \Vert f_{k_i,j_i} \Vert_2 \\
&\lesssim_{\varepsilon} 2^{(\alpha/2-1+\varepsilon/2) k_3} 2^{(1-\alpha)k_4} \prod_{i=1}^3 \sum_{j_i \geq [\alpha k_1^*] } 2^{j_i/2} \Vert f_{k_i,j_i} \Vert_{L^2}.
\end{split}
\end{equation*}

Likewise we find for the contribution of $j_4 \geq 2k_1^* + l$ the bound
\begin{equation*}
\lesssim 2^{(\alpha/2-1+\varepsilon/2)k_1} 2^{(1-\alpha)k_4} \prod_{i=1}^3 \sum_{j_i \geq [\alpha k_1^*]} 2^{j_i/2} \Vert f_{k_i,j_i} \Vert_2.
\end{equation*}
Summation over $l$ yields the claim. 
\end{proof}
At last, we turn to $High \times High \times High \rightarrow Low$-interaction:
\begin{lemma}
\label{lem:ShorttimeHighHighHighLowInteraction}
Let $k_1 \geq 50$, $|k_1-k_2| \leq 10$, $|k_1-k_3| \leq 10$, $k_4 \leq k_1-20$ and suppose that $P_{k_i} u_i = u_i$ for $i = 1,2,3$. Then, we find \eqref{eq:shorttimeFrequencyLocalizedEstimate} to hold with $D(\alpha,\underline{k}) = 2^{(\alpha-3/2+\varepsilon)k_1} 2^{(1-\alpha)k_4}$ for any $\varepsilon > 0$.
\end{lemma}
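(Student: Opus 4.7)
The plan is to follow the template of Lemmas~\ref{lem:ShorttimeHighHighLowLowInteraction} and~\ref{lem:ShorttimeHighHighHighHighInteraction}. Since the $N^\alpha_{k_4}$-cutoff has time scale $2^{-[\alpha k_4]}$ while the three high-frequency factors are natively controlled on scale $2^{-[\alpha k_1]}$, I would first insert the partition of unity $\sum_{m \in \Z} \gamma^3(\cdot - m) \equiv 1$ at the finer scale inside $\eta_0(2^{[\alpha k_4]}(t-t_{k_4}))$, generating $\sim 2^{\alpha(k_1-k_4)}$ translates. Writing $f_{k_i,j_i}$ for the modulation pieces ($j_i \geq [\alpha k_1]$) of the localized space-time Fourier transforms as in Lemma~\ref{lem:ShorttimeHighHighLowLowInteraction}, standard manipulations reduce matters to
\begin{equation*}
\begin{split}
&2^{\alpha(k_1-k_4)} 2^{k_4} \sum_{j_4 \geq [\alpha k_4]} 2^{-j_4/2} \sum_{j_1,j_2,j_3 \geq [\alpha k_1]} \Vert 1_{D_{k_4,\leq j_4}} (f_{k_1,j_1} * f_{k_2,j_2} * f_{k_3,j_3})^{\mathfrak{N}} \Vert_{L_\tau^2 \ell^2_n} \\
&\quad \lesssim 2^{(\alpha - 3/2 + \varepsilon)k_1} 2^{(1-\alpha)k_4} \prod_{i=1}^3 \sum_{j_i \geq [\alpha k_1]} 2^{j_i/2} \Vert f_{k_i,j_i} \Vert_{L_\tau^2 \ell^2_n}.
\end{split}
\end{equation*}

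The key geometric input is a sharp lower bound on the resonance. Because $n_1+n_2+n_3 = n$ with $|n_i| \sim 2^{k_1}$ and $|n| \leq 2^{k_4+O(1)} \ll 2^{k_1}$, the three high frequencies cannot all share the same sign; two must have one sign and the third the opposite, with magnitudes nearly equal. Hence each factor in $\Omega = 3(n_1+n_2)(n_1+n_3)(n_2+n_3)$ has modulus $\sim 2^{k_1}$, so $|\Omega| \sim 2^{3k_1}$, and consequently $j_1^* := \max(j_1,j_2,j_3,j_4) \geq 3k_1 - O(1)$.

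Given this strong resonance, I would apply \eqref{eq:multilinearL6StrichartzEstimate} via duality against a test function supported in $D_{k_4,\leq j_4}$. The sorted frequency tuple is $(k_1,k_1,k_1,k_4)$, so $k_3^* \sim k_1$ and the estimate reads
\begin{equation*}
\Vert 1_{D_{k_4,\leq j_4}} (f_{k_1,j_1} * f_{k_2,j_2} * f_{k_3,j_3})^{\mathfrak{N}} \Vert_{L_\tau^2 \ell^2_n} \lesssim_\varepsilon 2^{-j_1^*/2} 2^{\varepsilon k_1} 2^{j_4/2} \prod_{i=1}^{3} 2^{j_i/2} \Vert f_{k_i,j_i} \Vert_{L_\tau^2 \ell^2_n}.
\end{equation*}
Splitting the $j_4$-sum into $j_4 \geq \max_{i\leq 3} j_i$ (where $j_1^* = j_4 \geq 3k_1$, the factors $2^{-j_4/2}\,2^{j_4/2}$ cancel, and summation in $j_4 \geq 3k_1$ contributes $2^{-3k_1/2}$) and its complement (where $\max_{i\leq 3} j_i \geq 3k_1$ and the free $j_4$-sum only contributes a factor $\lesssim k_1 \lesssim 2^{\varepsilon k_1}$) yields the net gain $2^{-(3/2-\varepsilon)k_1}$. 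Multiplying by the prefactor $2^{\alpha(k_1-k_4)} 2^{k_4}$ produces the desired $D(\alpha,\underline{k}) = 2^{(\alpha-3/2+\varepsilon)k_1}2^{(1-\alpha)k_4}$, while the $j_1,j_2,j_3$ sums reassemble to the $F^\alpha_{k_i}$-norms; trading the top modulation for one regularity slightly below $1/2$ proceeds as in the previous lemmas.

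The main obstacle is the sign/geometry argument forcing $|\Omega| \sim 2^{3k_1}$; once this is in place, the $L^6$ Strichartz estimate together with the logarithmic $j_4$-sum closes the bound with room to spare. If one preferred to avoid \eqref{eq:multilinearL6StrichartzEstimate}, the same decay can be obtained from two applications of the bilinear estimate \eqref{eq:BilinearEstimate} paired as $u_1 u_4$ and $u_2 u_3$, whose Fourier supports both concentrate on $|n| \sim 2^{k_1}$ by the convolution constraint; the book-keeping is slightly heavier but structurally identical.
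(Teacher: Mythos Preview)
Your argument is correct and follows the paper's route: insert the finer time partition at scale $2^{-[\alpha k_1]}$ (incurring the factor $2^{\alpha(k_1-k_4)}$), use $|\Omega|\sim 2^{3k_1}$ from $n_i+n_j=n-n_k$, and close with \eqref{eq:multilinearL6StrichartzEstimate}. One small imprecision: in your complement case the $j_4$-range is $[\alpha k_4,\max_{i\leq3}j_i)$, whose length is $\lesssim j_1^*$ rather than $\lesssim k_1$; this is harmless since $j_1^*\,2^{-j_1^*/2}$ with $j_1^*\geq 3k_1$ still gives the $2^{-(3/2-\varepsilon)k_1}$ gain, and the paper avoids the issue by splitting instead at the fixed threshold $j_4=3k_1^*$.
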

\begin{proof}
Like in Lemma \ref{lem:ShorttimeHighHighLowLowInteraction} we have to add localization in time according to $k_1^*$. By the notation and conventions from above, we have to show the estimate
\begin{equation*}
\begin{split}
&\quad 2^{k_4} 2^{\alpha(k_1-k_4)} \sum_{j_4 \geq [\alpha k_4]} 2^{-j_4/2} \sum_{\substack{j_i \geq [\alpha k_1^*], \\ i =1,2,3}} \Vert 1_{D_{k_4,\leq j_4}}(f_{k_1,j_1} * f_{k_2,j_2} * f_{k_3,j_3})^{\mathfrak{N}} \Vert_{L^2_{\tau} \ell^2_n} \\
&\lesssim_{\varepsilon} 2^{(1-\alpha)k_4} 2^{(\alpha - 3/2 + \varepsilon)k_1} \prod_{i=1}^3 \sum_{j_i \geq [\alpha k_1^*]} 2^{j_i/2} \Vert f_{k_i,j_i} \Vert_{L^2_{\tau} \ell^2_n}.
\end{split}
\end{equation*}
The resonance function implies $j_1^* \geq 3k_1^* - 20$. We split the sum over $j_4$ into $[\alpha k_4] \leq j_4 \leq 3k_1^*$ and $j_4 \geq 3k_1^*$.
The first part is estimated by an application of \eqref{eq:multilinearL6StrichartzEstimate}
\begin{equation*}
\begin{split}
&\quad 2^{k_4} 2^{\alpha (k_1 - k_4)} \sum_{[\alpha k_4] \leq j_4 \leq 3k_1^*} 2^{-j_4/2} \sum_{\substack{j_i \geq [\alpha k_1^*], \\ i =1,2,3}} \Vert 1_{D_{k_4,\leq j_4}} (f_{k_1,j_1} * f_{k_2,j_2} * f_{k_3,j_3})^{\mathfrak{N}} \Vert_{L^2_{\tau} \ell^2_n} \\
&\lesssim_{\varepsilon} 2^{\alpha k_1} 2^{(1-\alpha)k_4} \sum_{[\alpha k_4] \leq j_4 \leq 3k_1^*} 2^{-j_4/2} \sum_{\substack{j_i \geq [\alpha k_1^*], \\ i =1,2,3}} 2^{-j_1^*/2} 2^{(\varepsilon k_1^*)/2} 2^{j_4/2}  \prod_{i=1}^3 2^{j_i/2} \Vert f_{k_i,j_i} \Vert_{L^2_{\tau} \ell^2_n} \\
&\lesssim_{\varepsilon} 2^{(\alpha+\varepsilon/2-3/2) k_1} 2^{k_4(1-\alpha)} (3k_1^*) 2^{(\alpha-2)k_1/2} \prod_{i=1}^3 \sum_{j_i \geq [\alpha k_1^*]} 2^{j_i/2} \Vert f_{k_i,j_i} \Vert_{L^2_{\tau} \ell^2_n}.
\end{split}
\end{equation*}
The estimate for $j_4 \geq 3k_1^*$ follows similarly, which proves the claim together with the standard modification of lowering the modulation regularity slightly. 
\end{proof}
For all frequencies low we have the following trivial estimate due to the Cauchy-Schwarz inequality.
\begin{lemma}
\label{lem:LowLowLowLowInteraction}
Let $k_1,\ldots,k_4 \leq 200$. Then, we find \eqref{eq:shorttimeFrequencyLocalizedEstimate} to hold with $D(\alpha,\underline{k}) = 1$.
\end{lemma}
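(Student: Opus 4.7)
The plan is to exploit the fact that, with $k_1,\ldots,k_4 \leq 200$, every dyadic quantity in sight is $O_\alpha(1)$ and the estimate \eqref{eq:shorttimeFrequencyLocalizedEstimate} reduces to a crude Cauchy--Schwarz argument. In particular, the derivative $in$ appearing in $\mathcal{N}$ contributes at most $2^{k_4+1} = O(1)$; the weight $2^{[\alpha k_4]}$ in the definition of $N_{k_4}^\alpha$ is bounded above and below by constants depending only on $\alpha$; and the number of non-resonant frequency tuples $(n_1,n_2,n_3)$ with $n_1+n_2+n_3=n$ and $|n_i| \leq 2^{201}$ is finite.

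Following the reductions used in Lemmas \ref{lem:ShorttimeHighLowLowHighInteraction}--\ref{lem:ShorttimeHighHighHighLowInteraction}, I would insert a smooth time cutoff $\eta_0(2^{[\alpha k_4]}(t-t_{k_4}))$ and use \eqref{eq:XkEstimateIII} to distribute it among the three factors, reducing the claim to
\[
\sup_{t_{k_4} \in \R} \bigl\Vert (\tau - n^3 + i 2^{[\alpha k_4]})^{-1} 1_{I_{k_4}}(n) \mathcal{F}_{t,x}\bigl[\eta_0(2^{[\alpha k_4]}(t-t_{k_4})) \mathcal{N}(v_1,v_2,v_3)\bigr] \bigr\Vert_{X_{k_4}} \lesssim_\alpha \prod_{i=1}^3 \Vert v_i \Vert_{F_{k_i}^\alpha},
\]
where $v_i$ denotes the time-localized version of $u_i$ at scale $2^{-[\alpha k_4]}$.

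The right-hand side is then controlled by a direct computation. Lemma \ref{lem:embeddingShorttimeSpaces}(i) gives $\Vert v_i \Vert_{L^\infty_t L^2_x} \lesssim \Vert v_i \Vert_{F_{k_i}^\alpha}$, and since each $v_i$ is supported on $O(1)$ Fourier modes, Bernstein's inequality upgrades this to $\Vert v_i \Vert_{L^\infty_{t,x}} \lesssim \Vert v_i \Vert_{F_{k_i}^\alpha}$. Consequently $\mathcal{N}(v_1,v_2,v_3)$ is bounded in $L^\infty_{t,x}$ by the product on the right. For functions supported in a time interval of length $\lesssim 1$ and in finitely many Fourier modes, the $X_{k_4}$-type norm on the left is dominated by a universal constant times $\Vert \cdot \Vert_{L^2_{t,x}}$, with the $\ell^1$-summation over modulations $j_4$ converging because of the $(\tau-n^3+i2^{[\alpha k_4]})^{-1}$ factor.

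The passage from $F_{k_i}^\alpha$ to $F_{k_i}^{1/2-,\alpha}$ on the right-hand side follows from the standard slight loss in the highest modulation, as already used in Lemmas \ref{lem:ShorttimeHighLowLowHighInteraction}--\ref{lem:ShorttimeHighHighHighLowInteraction}, together with Lemma \ref{lem:tradingModulationRegularity}. I do not anticipate any genuine obstacle: the entire argument takes place at a bounded frequency scale, so no dispersive estimate or resonance analysis is actually needed, which is why the author labels the bound trivial.
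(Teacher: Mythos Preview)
Your argument is correct and matches the paper's approach: the paper gives no proof at all, simply stating that the estimate is ``trivial\ldots due to the Cauchy--Schwarz inequality,'' and your elaboration---bounding everything by constants since all frequencies are $\leq 200$, using the $L^\infty_t L^2_x$ embedding plus Bernstein, and summing the modulation weight---is a sound way to fill in that one-line remark.
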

We summarize the lower regularity thresholds, for which we can show the trilinear estimate \eqref{eq:shorttimeTrilinearEstimate} by splitting up the frequencies and using the estimate \eqref{eq:shorttimeFrequencyLocalizedEstimate}:
\begin{enumerate}
\item $High \times Low \times Low \rightarrow High$-interaction: Lemma \ref{lem:ShorttimeHighLowLowHighInteraction} provides us with the regularity threshold $s=-(\alpha/4)+$.
\item $High \times High \times Low \rightarrow High$-interaction: Lemma \ref{lem:ShorttimeHighHighLowHighInteraction} provides us with the regularity threshold $s=-(\alpha/4)+$.
\item $High \times High \times High \rightarrow High$-interaction: Lemma \ref{lem:ShorttimeHighHighHighHighInteraction} provides us with the regularity threshold $s=(1-\alpha)/4$.
\item $High \times High \times Low \rightarrow Low$-interaction: Lemma \ref{lem:ShorttimeHighHighLowLowInteraction} provides us with the regularity threshold $s=-(1/6)+$ for $\alpha = 1$.
\item $High \times High \times High \rightarrow Low$-interaction: Lemma \ref{lem:ShorttimeHighHighHighLowInteraction} provides us with the regularity threshold $s=-(1/6)+$ for $\alpha = 1$.
\item $Low \times Low \times Low \rightarrow Low$-interaction: By Lemma \ref{lem:LowLowLowLowInteraction}, there is no threshold.
\end{enumerate}
We have proved the following proposition:
\begin{proposition}
\label{prop:nonlinearEstimate}
Let $T \in (0,1]$. For $0<s<1/2$, there is $\alpha(s)<1$ and $\theta = \theta(s)>0$ or $s=0$, $\alpha=1$ and $\theta=0$ such that
\begin{equation*}
\Vert \mathfrak{N}(u_1,u_2,u_3) \Vert_{N^{s,\alpha}(T)} \lesssim T^\theta \prod_{i=1}^3 \Vert u_i \Vert_{F^{s,\alpha}(T)}.
\end{equation*}
Furthermore, there is $\delta^\prime > 0$ so that for any $0<\delta <\delta^\prime$ there is $s=s(\delta)<0$ and $\theta > 0$ such that
\begin{equation*}
\Vert \mathfrak{N}(u_1,u_2,u_3) \Vert_{N^{s,1+\delta}(T)} \lesssim T^\theta \prod_{i=1}^3 \Vert u_i \Vert_{F^{s,1+\delta}(T)}.
\end{equation*}
\end{proposition}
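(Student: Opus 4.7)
The plan is to reduce the trilinear bound to the frequency-localized estimate \eqref{eq:shorttimeFrequencyLocalizedEstimate} case by case, then sum using the Littlewood--Paley definitions of $N^{s,\alpha}(T)$ and $F^{s,\alpha}(T)$. First, split $\mathfrak{N} = \mathcal{R} + \mathcal{N}$. The resonant piece $\mathcal{R}$ only involves the Fourier modes $n,-n,n$, so it is automatically confined to the $High \times High \times High \to High$ regime and is handled by the same argument (indeed the better one) that yields Lemma \ref{lem:ShorttimeHighHighHighHighInteraction}. For the non-resonant piece $\mathcal{N}$, I expand $P_{k_4}\mathcal{N}(u_1,u_2,u_3) = \sum_{k_1,k_2,k_3}P_{k_4}\mathcal{N}(P_{k_1}u_1,P_{k_2}u_2,P_{k_3}u_3)$ and reduce to the six exhaustive frequency configurations already enumerated at the end of Section \ref{section:shorttimeTrilinearEstimates}.

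Next, I square the $N^{s,\alpha}(T)$ norm, insert the pointwise-in-$k_4$ bound provided by Lemmas \ref{lem:ShorttimeHighLowLowHighInteraction}--\ref{lem:LowLowLowLowInteraction}, redistribute factors $2^{sk_i}$ from the weight $2^{2sk_4}$, and apply Cauchy--Schwarz in the summation indices that are not fixed by the constraint $k_4 \sim \max k_i$ (with at most one logarithmic loss in the interior frequencies). This reduces matters to checking that, in each configuration, the multiplier
\begin{equation*}
M(\alpha,\underline{k}) \;=\; 2^{s k_4}\, D(\alpha,\underline{k}) \prod_{i=1}^{3} 2^{-s k_i}
\end{equation*}
sums, giving the thresholds already listed. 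For $0 < s < 1/2$ I choose $\alpha = \alpha(s) < 1$ close enough to $1$ so that the worst case $(1-\alpha)/4$ from Lemma \ref{lem:ShorttimeHighHighHighHighInteraction} is $\le s$, while the remaining configurations are easier (since $\alpha < 1$ only improves $-\alpha/4+$). For the endpoint $s=0$, $\alpha = 1$ works directly. For the second claim, the opposite choice $\alpha = 1 + \delta$ improves the bounds $D(\alpha,\underline{k})$ in the $HLLH$, $HHLH$, $HHLL$, $HHHL$ configurations (each gains a positive power of the highest frequency as $\alpha$ grows past $1$), while worsening only the $HHHH$ case to $(1-\alpha)/4 = -\delta/4$; picking $s = s(\delta) \in (-\delta/4, 0)$ small in absolute value closes the sums in every configuration.

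The short-time estimates \eqref{eq:shorttimeFrequencyLocalizedEstimate} are stated for the $F^{\alpha}_{k_i}$-norms with full $1/2$ modulation exponent and therefore do not a priori yield a factor of $T^\theta$. To generate such a factor I invoke Lemma \ref{lem:tradingModulationRegularity}: the frequency-localized estimates hold, with only a slight loss in the highest modulation, when one of the three inputs is measured in $F^{b,\alpha}_{k_i}$ for some $b < 1/2$; Lemma \ref{lem:tradingModulationRegularity} then trades this for $T^{1/2-b-}$, giving $\theta = \theta(s) > 0$ in all cases where the configuration thresholds are attained with strict inequality. At $s=0$, $\alpha=1$ the High$^4$ bound in Lemma \ref{lem:ShorttimeHighHighHighHighInteraction} is exactly at the threshold, so no $T^\theta$ can be extracted there, consistent with the statement $\theta = 0$ at the endpoint.

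The main obstacle is bookkeeping the summations in the low-output configurations $HHLL$ and $HHHL$: there the bound $D(\alpha,\underline{k}) = 2^{(\alpha/2-1+\varepsilon)k_1}2^{(1-\alpha)k_4}$ (and its analogue) couples $k_1$ and $k_4$, so after inserting $2^{s k_4}\prod 2^{-s k_i}$ one must verify that the net exponent on both $k_1$ and $k_4$ is strictly negative simultaneously. The first claim requires $s + (\alpha/2-1) < 0$ from the $k_1$ side and $(1-\alpha) - s < 0$ from the $k_4$ side, which both hold for $s>0$ and $\alpha = 1-0$; the second claim requires the analogous pair to hold for $s<0$ and $\alpha = 1+\delta$, which constrains $|s|$ to be of order $\delta$. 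Once these sums are closed, the proposition follows by taking $\theta$ to be the minimum of the $\theta$'s produced in each configuration.
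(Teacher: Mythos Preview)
Your proposal is correct and follows essentially the same approach as the paper: the proposition is proved by collecting the frequency-localized estimates from Lemmas \ref{lem:ShorttimeHighLowLowHighInteraction}--\ref{lem:LowLowLowLowInteraction}, checking that the multipliers $D(\alpha,\underline{k})$ sum in each configuration for the stated ranges of $(s,\alpha)$, and invoking Lemma \ref{lem:tradingModulationRegularity} to extract the factor $T^\theta$ from the slack in modulation regularity. In fact you supply more detail on the summation and on the choice of $\alpha$ than the paper does, which simply lists the regularity thresholds for each interaction type and declares the proposition proved.
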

\section{Energy estimates}
\label{section:EnergyEstimates}
We have to propagate the energy norm to finish the proofs of Theorems \ref{thm:localExistence} and \ref{thm:nonExistence}. This is achieved in the following proposition:
\begin{proposition}
\label{prop:energyPropagation}
\begin{enumerate}
\item[(a)]
Suppose that $\alpha=1$. There is $\theta(s)>0$ so that we find the following estimate to hold
\begin{equation*}
\Vert u \Vert_{E^s(T)}^2 \lesssim \Vert u_0 \Vert_{H^s}^2 + T^\theta \Vert u \Vert_{F^{s,\alpha}(T)}^4
\end{equation*}
whenever $s>1/4$. Furthermore, there are non-negative functions $c(s),d(s)$ and $\theta(s)>0$ so that we find for any $M \in 2^{\mathbb{N}}$ the estimate
\begin{equation}
\label{eq:energyPropagationB}
\Vert u \Vert_{E^s(T)}^2 \lesssim \Vert u_0 \Vert_{H^s}^2 + T^{\theta} M^{c(s)} \Vert u \Vert_{F^{s,\alpha}(T)}^4 + M^{-d(s)} \Vert u \Vert_{F^{s,\alpha}(T)}^4 + T^\theta \Vert u \Vert_{F^{s,\alpha}(T)}^6
\end{equation}
to hold whenever $s>0$.
\item[(b)]
Suppose that \eqref{eq:L8Hypothesis} is true. Then, there is $s^\prime<0$ so that for $s^\prime<s<0$ there is $\delta(s)>0$ and there are non-negative functions $c(s),d(s)$ and $\theta(s)>0$ so that \eqref{eq:energyPropagationB} holds true for $\alpha = 1 + \delta$.
\end{enumerate}
\end{proposition}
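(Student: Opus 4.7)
My plan is as follows.

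\textbf{Setup.} The starting point is the fundamental theorem of calculus applied to $\Vert P_k u(\cdot)\Vert_{L^2}^2$ for a smooth solution $u$ of \eqref{eq:renormalizedmKdV}: for each $k \in \N_0$ and $t_k \in [-T,T]$,
\begin{equation*}
\Vert P_k u(t_k) \Vert_{L^2}^2 = \Vert P_k u_0 \Vert_{L^2}^2 + 2 \int_0^{t_k} \!\! \int_{\T} P_k u \cdot P_k \mathfrak{N}(u) \, dx \, dt.
\end{equation*}
After weighting by $2^{2ks}$, summing in $k$, expanding in Fourier modes via the interaction-picture substitution $\widehat{v}(t,n) = e^{-itn^3}\widehat{u}(t,n)$, and symmetrizing over $n_1+n_2+n_3+n_4 = 0$, the bound on $\Vert u \Vert_{E^s(T)}^2 - \Vert u_0\Vert_{H^s}^2$ will reduce to controlling, for every Littlewood--Paley quadruple $\vec{k}$, a symmetrized quadrilinear form
\begin{equation*}
J_{\vec{k}}(t_k) = \int_0^{t_k} \sum_{\substack{n_1+\cdots+n_4=0 \\ (*)}} e^{it\Omega(\vec{n})} \, m_{\vec{k}}(\vec{n}) \prod_{i=1}^4 \widehat{v}(t,n_i) \, dt,
\end{equation*}
where $m_{\vec{k}}$ encodes the derivative $in_4$ and the weight $2^{2sk_i^*}$, the condition $(*)$ reflects that the renormalization removes the diagonal resonance, and $\Omega(\vec{n}) = 3(n_1+n_2)(n_1+n_3)(n_2+n_3)$. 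The sharp cutoff $1_{[0,t_k]}$ hidden in the $t$-integration will be absorbed into the short-time spaces via Lemma \ref{lem:sharpTimeCutoffAlmostBounded}.

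\textbf{Resonance splitting and normal form.} Next I localize in dyadic shells $|\Omega(\vec{n})| \sim M_\Omega$ and split into the high-resonance region $M_\Omega \geq M$ and its complement. On the high-resonance region I perform a normal form reduction: writing $e^{it\Omega} = (i\Omega)^{-1} \partial_t e^{it\Omega}$ and integrating by parts in $t$. This produces, first, boundary contributions at $t=0$ and $t=t_k$ that are quadrilinear in $u$ but carry the smoothing factor $\Omega^{-1}$; summing over $\vec{k}$ and invoking the embedding $F^{s,\alpha}(T) \hookrightarrow C([-T,T], H^s)$ from Lemma \ref{lem:embeddingShorttimeSpaces} yields the term $M^{-d(s)}\Vert u\Vert^4_{F^{s,\alpha}(T)}$. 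Second, it produces bulk contributions in which $\partial_t$ hits one of the $\widehat{v}$ factors; substituting $\partial_t \widehat{v}(t,n_i) = e^{-itn_i^3} \widehat{\mathfrak{N}(u)}(t,n_i)$ replaces a single $\widehat{v}$ by a cubic expression in $u$, producing a hexlinear form controlled by $T^\theta \Vert u\Vert^6_{F^{s,\alpha}(T)}$ via Proposition \ref{prop:nonlinearEstimate}.

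\textbf{Low-resonance estimate and assembly.} On the complementary region $M_\Omega \leq M$ I estimate $J_{\vec{k}}$ directly using the multilinear bounds of Section \ref{section:MultilinearEstimates}. The resonance localization restricts the effective summation and accounts for the polynomial loss $M^{c(s)}$, while Lemma \ref{lem:tradingModulationRegularity} converts a slice of modulation regularity into the factor $T^\theta$. Summing over dyadic $\vec{k}$ by Cauchy--Schwarz on the Littlewood--Paley pieces yields the full estimate \eqref{eq:energyPropagationB} for all $s>0$. In the higher-regularity range $s > 1/4$ the low-resonance direct estimate alone closes without the normal-form step, which delivers the simpler first bound in (a).

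\textbf{Part (b) and the main obstacle.} For part (b) the critical bottleneck is the small-resonance portion of the high-high-low-low and high-high-high-low interactions, where the $L^6_{t,x}$-Strichartz estimate of Lemma \ref{lem:linearStrichartzEstimates} no longer supplies sufficient smoothing. Assuming \eqref{eq:L8Hypothesis} and interpolating with \eqref{eq:L4StrichartzEstimate}, the improved bound $\Vert u\Vert_{L^6_{t,x}} \lesssim \Vert u\Vert_{X^{0+,4/9+}}$ transplanted to the short-time spaces $F^{1+\delta}_k$ for small $\delta > 0$ supplies exactly the additional gain needed to push the low-resonance multilinear estimates into a window $s^\prime < s < 0$. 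The hard part throughout will be guaranteeing a uniform balance between the high-resonance smoothing $M^{-d(s)}$ and the low-resonance loss $M^{c(s)}$ across all six frequency-interaction geometries; this balance is tightest in the near-resonance high-high-low-low case, which is precisely the one dictating the threshold $s = 0$ in part (a) and forcing the conjectural \eqref{eq:L8Hypothesis} in part (b).
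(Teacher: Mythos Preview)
Your outline has the right skeleton, but two steps would not close as written.

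\textbf{Symmetrization.} You apply the fundamental theorem to $\Vert P_k u(t_k)\Vert_{L^2}^2$ with $t_k$ the maximizer for each $k$, then claim the resulting quadrilinear form is symmetrized in $n_1,\dots,n_4$. It is not: the time endpoint depends on which variable is the output frequency, so only the three input frequencies may be permuted. The fully symmetrized multiplier $\psi_{s,a}(\bar n)=\sum_i a(n_i)n_i$ is precisely what delivers the bound $|\psi_{s,a}|\lesssim \tilde a(2^{k_1^*})2^{-2k_1^*}|\Omega|$ of Lemma~\ref{lem:pointwiseMultiplierBound}; without it one is stuck with $|n_4|\,2^{2sk}$ and loses a full derivative. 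The paper's remedy is to replace $\langle n\rangle^{2s}$ by a symbol $a\in S^s_\varepsilon$ chosen via an envelope sequence adapted to $u_0$ (Lemma~\ref{lem:energyThreshold}), estimate $\Vert u(t)\Vert_{H^a}^2$ at a \emph{single} time so that the four-fold symmetrization is legitimate, and then recover $\Vert u\Vert_{E^s(T)}^2$ from the family of such symbols. Separately, the paper splits on frequency size ($|n_j|\le M$) rather than on resonance size ($|\Omega|\ge M$); your resonance-threshold split is a reasonable alternative, but the bookkeeping for the boundary term and the low piece is different from what you describe.

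\textbf{The hexlinear term and part (b).} Controlling the bulk term after integration by parts by invoking Proposition~\ref{prop:nonlinearEstimate} is too quick: the hexlinear expression carries the weight $\psi_{s,a}(\bar n)\,n_1/\Omega(\bar n)$ and a convolution constraint linking the two triples through $n_1$, and is not of the form $\langle w,\mathfrak N(u)\rangle$ with $w$ in the dual of $N^{s,\alpha}$. The paper proves a dedicated estimate (Proposition~\ref{prop:EstimateCorrectionTerm}) by casing on whether the dominant frequency lies in the first triple $(k_2,k_3,k_4)$ or the second $(k_5,k_6,k_7)$, and in the latter case adds a second layer of time localization exploiting $L^2_t$-orthogonality; that Case~B is exactly what forces $s>0$. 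Consequently the obstruction for part~(b) is not the low-resonance quadrilinear piece but the hexlinear remainder $R_6^{s,a,M}$, and the conditional argument (Proposition~\ref{prop:negativeEnergyPropagationMKDV}) feeds the interpolated estimate $\Vert u\Vert_{L^6_{t,x}}\lesssim\Vert u\Vert_{X^{0+,4/9+}}$ into the hexlinear form together with a second-resonance ($\Omega^{(2)}$) analysis. Your identification of the bottleneck as high-high-low-low and high-high-high-low quadrilinear interactions does not match where the argument actually breaks.
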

In Subsection \ref{subsection:EnergyEstimatesPositiveSobolevSpaces} we derive estimates in positive Sobolev spaces for the proof of part (a). In Subsection \ref{subsection:EnergyEstimatesNegativeSobolevSpaces} we make use of the conjectured $L^8_{t,x}$-Strichartz estimate to propagate the energy norm in negative Sobolev spaces. This yields the necessary estimates for the proof of part (b). In Subsection \ref{subsection:ConclusionEnergyEstimate} the proof of Proposition \ref{prop:energyPropagation} is concluded.
\subsection{Energy estimates in positive Sobolev spaces}
\label{subsection:EnergyEstimatesPositiveSobolevSpaces}
To prove the above estimates, we analyze the energy functional 
\begin{equation*}
\Vert u(t) \Vert_{H^s}^2 = \sum_{k \in \Z} \langle k \rangle^{2s} |\hat{u}(t,k)|^2.
\end{equation*}
It turns out that control over the Sobolev norm is not enough to control the norm of the energy space because the norm of $E^s(T)$ differs from the $H^s$-norm by a logarithm. The remedy is to control a slightly larger class of symbols.

Symbols of the following kind can already be found in \cite{KochTataru2007}, see also \cite[Section~2.3.,~p.~15]{OhWang2018} for a more constructive description of related symbols.
\begin{definition}
Let $\varepsilon > 0$ and $s \in \mathbb{R}$. Then $S^s_{\varepsilon}$ is the set of positively real-valued, spherically symmetric and smooth functions (symbols) with the following properties:
\begin{enumerate}
\item[(i)] Slowly varying condition: For $\xi \sim \xi^\prime$ we have
\begin{equation*}
a(\xi) \sim a(\xi^\prime),
\end{equation*}
\item[(ii)] symbol regularity,
\begin{equation*}
|\partial^{\alpha} a(\xi)| \lesssim_{\alpha} a(\xi) (1+\xi^2)^{-\alpha/2},
\end{equation*}
\item[(iii)] growth at infinity, for $|\xi| \gg 1$ we have
\begin{equation*}
s - \varepsilon \leq \frac{\log a(\xi)}{\log(1+\xi^2)} \leq s+ \varepsilon.
\end{equation*}
\end{enumerate}
\end{definition}
It will be admissible to choose $\varepsilon = \varepsilon(s)>0$ in the following, but the subsequent estimates must be uniform in $\varepsilon$. We also write 
\begin{equation}
\label{eq:SymbolDomination}
\tilde{a}(2^{k}) = 2^{2k(s+\varepsilon)}
\end{equation}
 because the expression safely estimates combinations of $a(2^k)$.
 
For $a \in S^{s}_\varepsilon$ we set
\begin{equation*}
\Vert u(t) \Vert_{H^a}^2 = \sum_{n \in \Z} a(n) |\hat{u}(t,n)|^2,
\end{equation*}
and, for a real-valued solution to \eqref{eq:renormalizedmKdV}, we compute
\begin{equation*}
\begin{split}
\partial_t \Vert u(t) \Vert^2_{H^a} &= 2 \Re ( \sum_{n \in \Z} a(n) \partial_t \hat{u}(t,n) \hat{u}(t,-n) ) \\
&= 2 \Re ( \sum_{n \in \Z} a(n) i n^3 |\hat{u}(t,n)|^2 + in a(n) |\hat{u}(t,n)|^2 \hat{u}(t,n) \hat{u}(t,-n) \\
&\quad \quad + i\frac{n}{3} a(n) \sum_{\substack{n = n_1+n_2+n_3,\\ (*)}} \hat{u}(t,n_1) \hat{u}(t,n_2) \hat{u}(t,n_3) \hat{u}(t,-n) ) \\
&= C \Re ( \sum_{\substack{n_1+\ldots+n_4 = 0,\\ (*)}} \left( a(n_1) n_1 + a(n_2) n_2 + a(n_3) n_3 + n_4 a(n_4) \right) \\
 &\quad \quad \hat{u}(t,n_1) \hat{u}(t,n_2) \hat{u}(t,n_3) \hat{u}(t,n_4) ).
\end{split}
\end{equation*}
The last step follows from a symmetrization argument, which fails for the difference equation. This is due to the lack of continuous dependence for $s<1/2$.

The fundamental theorem of calculus yields
\begin{equation*}
\Vert u(t) \Vert^2_{H^a} = \Vert u_0 \Vert_{H^a}^2 + C R_4^{s,a,M}(t,u) + C B_4^{s,a,M}(t,u) + C R_6^{s,a,M}(t,u).
\end{equation*}
The expressions are explained in detail below. The necessary estimates to deduce Proposition \ref{prop:energyPropagation} from the above display are carried out in Lemma \ref{lem:estimateBoundaryTerm} and Propositions \ref{prop:symmetrizedEnergyEstimate} and \ref{prop:EstimateCorrectionTerm}.

We turn to the details: In the following denote $\overline{n} = (n_1,\ldots,n_4)$ for $n_1+\ldots+n_4=0$. We set
\begin{equation*}
\psi_{s,a}(\overline{n}) = \sum_{i=1}^4 a(n_i) n_i
\end{equation*}
and
\begin{equation*}
R^{s,a}_4(T,u_1,\ldots,u_4) = \sum_{\substack{n_1+n_2+n_3+n_4=0,\\(*)}} \int_0^{T}  \psi_{s,a}(\overline{n}) \prod_{i=1}^4 \hat{u}_i(t,n_i) dt.
\end{equation*}
Write $R^{s,a}_4(T,u) := R^{s,a}_4(T,u,u,u,u)$.\\
We found 
\begin{equation*}
\Vert u(t) \Vert_{H^a}^2 = \Vert u_0 \Vert_{H^a}^2 + C R^{s,a}_4(t,u),
\end{equation*}
 and we shall see that the above expression can be estimated as long as $s>1/4$ in $F^{s,1}(T)$-spaces.
 
 To go below $s=1/4$ to $L^2$, we add a correction term in a similar spirit to the $I$-method (see e.g. \cite{CollianderKeelStaffilaniTakaokaTao2003}). But the boundary term is insensitive to the length of the time interval. To remedy this, we do not differentiate by parts all of $R^{s,a}_4$, but only the part, which contains high frequencies.
 
More precisely, we set for a large frequency $M \in 2^{\mathbb{N}}$
\begin{equation*}
R^{s,a,M}_4(T,u) = C \int_0^T \sum_{\substack{n_1+\ldots+n_4=0,\\(*),|n_j| \leq M}} \psi_{s,a}(\overline{n}) \prod_{i=1}^4 \hat{u}(t,n_i) dt
\end{equation*}
and decompose 
\begin{equation*}
R^{s,a}_4(t,u) = R^{s,a,M}_4(t,u) + (R^{s,a}_4(t,u) - R_4^{s,a,M}(t,u)).
\end{equation*}
The frequency cutoff $M$ will be later chosen in dependence of the norm of the initial value.

Next, we differentiate by parts, but only the term
\begin{equation*}
R^{s,a}_4(t,u) - R_4^{s,a,M}(t,u) = B_4^{s,a,M}(t,u) + R_6^{s,a,M}(t,u).
\end{equation*}
We have
\begin{equation*}
\partial_t \hat{u}(t,n) + (in)^3 \hat{u}(t,n) = in |\hat{u}(t,n)|^2 \hat{u}(t,n) + \frac{in}{3} \sum_{\substack{n=n_1+n_2+n_3,\\(*)}} \hat{u}(t,n_1) \hat{u}(t,n_2) \hat{u}(t,n_3).
\end{equation*}
After changing to interaction picture $\hat{v}(t,n) = e^{-i n^3 t} \hat{u}(t,n)$, we find for solutions $u$
\begin{equation*}
\partial_t \hat{v}(t,n) = in |\hat{v}(n)|^2 \hat{v}(n) + \frac{in}{3} \sum_{\substack{n= n_1+n_2+n_3,\\ (*)}} e^{i t \Omega(\overline{n})} \hat{v}(t,n_1) \hat{v}(t,n_2) \hat{v}(t,n_3).
\end{equation*}
In this context, the resonance function is given by
\begin{equation*}
\Omega(\overline{n}) = \sum_{i=1}^4 n_i^3 = -3(n_1+n_2)(n_1+n_3)(n_2+n_3), \quad n_4 = -n.
\end{equation*}
Differentiation of $R^{s,a,M}_4$ by parts is possible because the resonance function does not vanish for the terms in $R_4^{s,a}$:
\begin{equation*}
\begin{split}
R_4^{s,a}(T,u) &= \int_0^T dt \sum_{\substack{n_1+\ldots+n_4=0,\\(*)}} \psi_{s,a}(\overline{n}) \prod_{i=1}^4 \hat{u}(t,n_i) \\
&= \sum_{\substack{n_1+\ldots+n_4=0,\\(*)}} \psi_{s,a}(\overline{n}) \int_0^T dt e^{i t \Omega(\overline{n})} \prod_{i=1}^4 \hat{v}(t,n_i) \\
&= \sum_{\substack{n_1+\ldots+n_4=0,\\(*)}} \psi_{s,a}(\overline{n}) \int_0^T dt \partial_t \left( \frac{e^{i t \Omega(\overline{n})}}{i \Omega(\overline{n})} \right) \prod_{i=1}^4 \hat{v}(t,n_i) \\
&= \left[ \sum_{\substack{n_1+\ldots+n_4=0,\\(*)}} \frac{\psi_{s,a}(\overline{n})}{i \Omega(\overline{n})} \prod_{i=1}^4 \hat{u}(t,n_i) \right]_{t=0}^T  \\
&\quad + 4 \sum_{\substack{n_1+n_2+n_3+n_4=0,\\(*)}} \frac{\psi_{s,a}(\overline{n})}{i \Omega(\overline{n})} \int_0^T dt (\partial_t \hat{v}(t,n_1)) \prod_{i=2}^4 \hat{v}(t,n_i).
\end{split}
\end{equation*}

Set
\begin{align*}
B_4^{s,a}(T,u) &= \left[ \sum_{\substack{n_1+\ldots+n_4=0,\\(*)}} \frac{\psi_{s,a}(\overline{n})}{i \Omega(\overline{n})} \prod_{i=1}^4 \hat{u}(t,n_i) \right]_{t=0}^T, \\
I(T,u) &= C \int_0^T dt \sum_{\substack{n_1+\ldots+n_4=0,\\(*)}} \frac{\psi_{s,a}(\overline{n}) n_1}{\Omega(\overline{n})} |\hat{u}(t,n_1)|^2 \hat{u}(t,n_1) \prod_{i=2}^4 \hat{u}(t,n_i), \\
II(T,u) &= C \int_0^T dt \sum_{\substack{n_1+\ldots+n_4 = 0,\\ (*)}} \frac{\psi_{s,a}(\overline{n}) n_1}{\Omega(\overline{n})} \prod_{i=2}^4 \hat{u}(t,n_i)  \\
 &\quad \quad \sum_{\substack{n_1+n_5+n_6+n_7 = 0,\\(*)}} \prod_{i=5}^7 \hat{u}(t,n_i).
\end{align*}
If we differentiate only $R^{s,a}_4(t,u) - R_4^{s,a,M}(t,u)$, then one of the initial frequencies has to be larger than $M$.

The following lemma provides us with a useful pointwise bound on $|\psi_{s,a}|$. Recall the notation \eqref{eq:SymbolDomination} to dominate expressions involving the symbol $a$.
\begin{lemma}
\label{lem:pointwiseMultiplierBound}
Let $s>0,\;0<\varepsilon<s$ and $a \in S^s_{\varepsilon}$. Suppose that $n_i \in I_{k_i}$ for $i =1,\ldots,4$. Then, we find the following estimate to hold:
\begin{equation}
\label{eq:pointwiseMultiplierBound}
|\psi_{s,a}(\overline{n})| \lesssim \frac{\tilde{a}(2^{k_1^*})}{2^{2k_1^*}} |\Omega(\overline{n})|
\end{equation}
\end{lemma}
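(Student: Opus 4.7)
The lemma makes quantitative the fact that, by the spherical symmetry of $a$ together with the constraint $n_1+n_2+n_3+n_4=0$, the symbol $\psi_{s,a}(\bar n)$ vanishes whenever a pair $n_i+n_j$ equals zero. Indeed, if $u:=n_1+n_2=0$ then $n_2=-n_1$ and $n_4=-n_3$, so by evenness of $a$ we have $a(n_i)n_i+a(n_j)n_j=0$ for both pairs; analogous statements hold for $v:=n_1+n_3$ and $w:=n_2+n_3$. Since $\Omega(\bar n)=-3uvw$, one expects $|\psi_{s,a}|$ to be controlled by $|\Omega|$ times a residual factor of size $\tilde a(N)/N^2$, where $N:=2^{k_1^*}$. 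I would reduce, without loss of generality, to $|n_4|=\max_i|n_i|\sim N$; the constraint $\sum n_i=0$ then forces $|n_3|\sim N$ as well, and I would split the argument according to whether $|n_1|,|n_2|\ll N$ (non-balanced case) or $|n_i|\sim N$ for every $i$ (balanced case).

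In the non-balanced regime I would group $\psi_{s,a}=(a(n_1)n_1+a(n_2)n_2)+(a(n_3)n_3+a(n_4)n_4)$ and rewrite each pair using spherical symmetry: for the first pair $a(n_2)=a(-n_2)=a(n_1-u)$, so the pair equals $[a(n_1)-a(n_1-u)]n_1+a(n_1-u)u$. Combining the mean value theorem with the symbol regularity $|a'(\xi)|\lesssim a(\xi)\langle\xi\rangle^{-1}$, and pairing so that the MVT interval stays on one side of zero, bounds this by $|u|\tilde a(N)$. An identical calculation with $-n_4=n_3+u$ in place of $-n_2=n_1-u$ handles the second pair. Since $|v|,|w|\sim N$ in this regime, we have $|\Omega|/N^2\sim|u|$, and the desired inequality follows.

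In the balanced regime I would pass to the coordinates $(u,v,w)$, in which $\psi_{s,a}$ is a smooth function vanishing on each of the three coordinate hyperplanes. Iterating the Hadamard lemma three times then produces the factorisation
\begin{equation*}
\psi_{s,a}(\bar n)=uvw\int_{[0,1]^3}\partial_u\partial_v\partial_w\psi_{s,a}(\sigma u,\tau v,\rho w)\,d\sigma\,d\tau\,d\rho.
\end{equation*}
A direct computation, exploiting that $\phi(\xi):=\xi a(\xi)$ is odd, rewrites the integrand as a signed sum of $\phi'''$ evaluated at four points of magnitude $\lesssim N$, and the symbol regularity bound $|\phi'''(\xi)|\lesssim a(\xi)\langle\xi\rangle^{-2}$ controls this by $\tilde a(N)/N^2$ in the dominant region. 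The main obstacle lies precisely in this balanced case when one or more of $|u|,|v|,|w|$ is much smaller than $N$: the parameter cube then contains points at which the argument of $\phi'''$ is $\ll N$, where the naive pointwise bound becomes too crude. I would address this either by exploiting the cancellation among the four $\phi'''$ terms (which sum to zero for constant $a$), or by a selective Hadamard factorisation that extracts only the smallest of $u,v,w$ and reduces the remaining expression to the non-balanced sub-case already treated.
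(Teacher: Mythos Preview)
Your non-balanced case (two small frequencies) is correct and matches the paper's Case~3 via a single mean value theorem. Your dichotomy, however, omits the case of exactly one small frequency among $n_1,\dots,n_4$; this is trivial ($|\Omega|\sim N^3$ and the crude bound $|\psi_{s,a}|\lesssim \tilde a(N)N$ suffices), but it should be stated.

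The real gap is in the balanced case when exactly two of $u,v,w$ are small, say $|u|,|w|\ll N$ and $|v|\sim N$. Your triple Hadamard factorisation forces you to integrate $\partial_u\partial_v\partial_w\psi_{s,a}$ over the full cube, and a direct computation gives
\[
\partial_u\partial_v\partial_w\psi_{s,a}=-\tfrac18\bigl[\phi'''(n_1)+\phi'''(n_2)+\phi'''(n_3)+\phi'''(n_4)\bigr],
\]
with all four terms of the \emph{same} sign; the vanishing for constant $a$ is because each term is individually zero, not because of cancellation, so your option~(1) does not help. Your option~(2), extracting only the smallest factor $u$, yields $|\psi_{s,a}|\lesssim |u|\,\tilde a(N)$, hence $|\psi_{s,a}|/|\Omega|\lesssim \tilde a(N)/(N|w|)$, which is too large by a factor $N/|w|$ precisely when $|w|\ll N$.

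The paper closes this subcase with the double mean value theorem (Lemma~\ref{lem:doubleMeanValueTheorem}): one extracts \emph{both} small factors $u$ and $w$ simultaneously, leaving $\phi''$ evaluated at a point of size $\sim N$, so that $|\psi_{s,a}|\lesssim |u||w|\,\tilde a(N)/N\sim |\Omega|\,\tilde a(N)/N^2$. Equivalently, in your language, you should do a Hadamard factorisation only in the variables that are $\ll N$ (one or two of them, never all three), so that the remaining mixed partial of $\phi$ is always evaluated at arguments $\sim N$. Once this modification is made, your argument and the paper's coincide.
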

The tools, which we use to derive the pointwise bound, are the mean value theorem and the double mean value theorem. To avoid confusion, we recall the double mean value theorem.
\begin{lemma}
\label{lem:doubleMeanValueTheorem}
If $y$ is controlled by $z$ and $|\eta|, |\lambda| \ll |\xi|$, then
\begin{equation*}
y(\xi+\eta+\lambda) - y(\xi+\eta) - y(\xi+\lambda) + y(\xi) = \mathcal{O}(|\eta||\lambda| \frac{z(\xi)}{\xi^2}).
\end{equation*}
\end{lemma}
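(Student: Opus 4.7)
The plan is to iterate the one-dimensional mean value theorem twice on a suitable first difference. The starting observation is that the left-hand side is a mixed second-order finite difference of $y$ in two independent directions, which for smooth $y$ should factor cleanly through $y''$ evaluated at a nearby point.

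First I would introduce the auxiliary function $F(t) := y(\xi + t + \lambda) - y(\xi + t)$, so that the quantity to be estimated is precisely $F(\eta) - F(0)$. By the classical mean value theorem there exists $\theta_1$ between $0$ and $\eta$ with $F(\eta) - F(0) = \eta\,F'(\theta_1)$. Since $F'(t) = y'(\xi + t + \lambda) - y'(\xi + t)$, a second application of the mean value theorem produces $\theta_2$ between $0$ and $\lambda$ such that $F'(\theta_1) = \lambda\,y''(\xi + \theta_1 + \theta_2)$. Combining the two yields the pointwise identity
\begin{equation*}
y(\xi+\eta+\lambda) - y(\xi+\eta) - y(\xi+\lambda) + y(\xi) = \eta\,\lambda\,y''(\xi + \theta_1 + \theta_2),
\end{equation*}
reducing the estimate to an $L^\infty$-bound on $y''$ near $\xi$.

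Next I would unpack what the hypothesis that $y$ is controlled by $z$ actually supplies here. In the symbol calculus relevant to this paper (and consistent with the conditions defining $S^s_\varepsilon$), this control means a second-derivative bound of the form $|\partial^2 y(\zeta)| \lesssim z(\zeta)\,\zeta^{-2}$ together with the slowly varying property $z(\zeta) \sim z(\xi)$ whenever $\zeta \sim \xi$. Because $|\eta|,|\lambda| \ll |\xi|$, the point $\xi + \theta_1 + \theta_2$ satisfies $|\xi+\theta_1+\theta_2| \sim |\xi|$, so that $z(\xi+\theta_1+\theta_2) \sim z(\xi)$ and $|y''(\xi + \theta_1+\theta_2)| \lesssim z(\xi)/\xi^2$. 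Inserting this bound into the identity above produces the claimed $\mathcal{O}(|\eta||\lambda|\,z(\xi)/\xi^2)$.

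The only mildly subtle point is bookkeeping: one must verify that the intermediate points produced by the two mean value theorems stay in the regime $|\xi + \theta_1 + \theta_2| \sim |\xi|$ where the slowly varying comparability $z(\xi+\theta_1+\theta_2)\sim z(\xi)$ holds, which is guaranteed by the smallness assumption $|\eta|,|\lambda| \ll |\xi|$. Beyond this, no oscillation or cancellation is exploited, and the lemma is essentially a direct corollary of two applications of the mean value theorem; I do not expect any genuine obstacle.
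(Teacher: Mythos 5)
Your proposal is correct and is essentially the argument behind the paper's proof, which simply defers to Colliander--Keel--Staffilani--Takaoka--Tao (Lemma 4.2 there): two successive applications of the one-dimensional mean value theorem to the mixed second difference, followed by the second-derivative bound $|y''(\zeta)|\lesssim z(\zeta)/\zeta^{2}$ for $|\zeta|\sim|\xi|$ supplied by the notion of control (matching the symbol regularity in $S^{s}_{\varepsilon}$). Your bookkeeping remark about the intermediate points staying in the regime $|\xi+\theta_{1}+\theta_{2}|\sim|\xi|$ is exactly the role of the hypothesis $|\eta|,|\lambda|\ll|\xi|$, so there is no gap.
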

\begin{proof}
Cf. \cite[Lemma~4.2.,~p.~715]{CollianderKeelStaffilaniTakaokaTao2003}. 
\end{proof}
We are ready to prove Lemma \ref{lem:pointwiseMultiplierBound}.
\begin{proof}[Lemma \ref{lem:pointwiseMultiplierBound}]
We prove the bound through Case-by-Case analysis.\\
\textbf{Case 1:} $|n_1| \sim |n_2| \sim |n_3| \sim |n_4| \sim 2^{k_1^*}$.

\underline{Subcase a:} Two of the factors $|n_2+n_3|, |n_2+n_4|, |n_3+n_4|$ are much smaller than $2^{k_1^*}$ (note that one factor must be of size $2^{k_1^*}$ because at least two numbers are of the same sign).\\
For definiteness suppose in the following that $|n_2+n_3| \ll 2^{k_1^*}, \; |n_1+n_2| \ll 2^{k_1^*}$, and from this assumption follows $|\Omega(\overline{n})| \sim 2^{k_1^*} |n_2+n_3| |n_1+n_2|$.

We set $\xi = n_1$, $\xi + \eta = -n_2$, $\xi+ \lambda = -n_4$, $\xi+\eta+\lambda = n_3$, to check that the assumptions of the double-mean value theorem for the function $a(\cdot) \cdot$ are fulfilled. Hence, by property (ii) of the symbol, we find $|\psi_{s,a}| \lesssim \frac{\tilde{a}(2^{k_1^*})}{2^{k_1^*}} |n_2+n_3| |n_1+n_2|$. Consequently, we find \eqref{eq:pointwiseMultiplierBound} to hold in this case.

\underline{Subcase b:} Next, suppose that one of the factors $|n_2+n_3|, |n_2+n_4|, |n_3+n_4|$ is much smaller than $2^{k_1^*}$, whereas the others are comparable to $2^{k_1^*}$. By symmetry suppose that this is $|n_1+n_2|$.\\
For the resonance function follows $|\Omega(\overline{n})| \sim 2^{2k_1^*} |n_1+n_2| $. We invoke the mean value theorem to find
\begin{equation*}
|\psi_{s,a}(\overline{n})| \lesssim \tilde{a}(2^{k_1^*}) |n_1+n_2| \sim \frac{\tilde{a}(2^{k_1^*})}{2^{2k_1^*}} |\Omega(\overline{n})|,
\end{equation*}
which proves the claim in this subcase.

\underline{Subcase c:} Suppose that all three factors $|n_2+n_3|, |n_2+n_4|, |n_3+n_4|$ are comparable to $2^{k_1^*}$. This gives $|\Omega(\overline{n})| \sim 2^{3k_1^*}$. By the trivial bound 
\begin{equation}
\label{eq:TrivialBoundPsi}
|\psi_{s,a}(\overline{n})| \lesssim \tilde{a}(2^{k_1^*}) 2^{k_1^*},
\end{equation}
 we find \eqref{eq:pointwiseMultiplierBound} to hold also in this subcase.
 
\textbf{Case 2:} $|n_1| \ll |n_2| \sim |n_3| \sim |n_4| \sim 2^{k_1^*}$.\\
In this case we have $|\Omega(\overline{n})| \sim 2^{3k_1^*}$. Together with \eqref{eq:TrivialBoundPsi}, the estimate \eqref{eq:pointwiseMultiplierBound} is immediate.

\textbf{Case 3:} $|n_1|, |n_2| \ll |n_3| \sim |n_4| \sim 2^{k_1^*}$.\\
In this case we find $|\Omega(\overline{n})| \sim |n_1+n_2| 2^{2k_1^*} $, and an application of the mean value theorem yields
\begin{equation*}
|\psi_{s,a}(\overline{n})| \lesssim |n_1+n_2| \tilde{a}(2^{k_1^*}),
\end{equation*}
which yields the claim in this case. 
\end{proof}
\begin{remark}
\label{rem:RegularityMultiplierNegativeSobolevSpacesMKDV}
In negative Sobolev spaces a related estimate was proven in\\ \cite[Lemma~5.2,~p.~59]{ChristHolmerTataru2012}. There, also regularity of the extension was proved. This will allow us to separate variables in negative Sobolev spaces.
\end{remark}
The second important ingredient to find the bound for $R_4^{s,a}$ is the following improvement of the $L_{t,x}^6$-Strichartz estimate. We remark that following along the lines of Section \ref{section:MultilinearEstimates} one can derive stronger estimates in some cases. But since this would not improve the overall analysis, we record only the simplified version below. The lemma can be proved like in Section \ref{section:MultilinearEstimates}.
\begin{lemma}
\label{lem:RefinedStrichartzEstimateL6}
Suppose that $\text{supp}(\tilde{u}_i) \subseteq D_{k_i,\leq j_i}$, where $j_i \geq [\alpha k_1^*]$ for $1 \leq \alpha \leq 2$, $i=1,\ldots,4$. Then, we find the following estimate to hold:
\begin{equation*}
\Vert P_{k_1} \mathcal{N}(u_2,u_3,u_4) \Vert_{L^2_{t,x}} \lesssim 2^{-k_1^*/2} 2^{k_4^*/2} \prod_{i=2}^4 2^{j_i/2} \Vert u_i \Vert_{L^2_{t,x}}.
\end{equation*}
\end{lemma}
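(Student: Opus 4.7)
The plan is to adapt the template from Section \ref{section:MultilinearEstimates}. By Plancherel and duality, the claim is equivalent to the quadrilinear estimate
\[
\left|\int u_1\,\mathcal{N}(u_2,u_3,u_4)\,dt\,dx\right| \lesssim 2^{-k_1^*/2}\,2^{k_4^*/2} \prod_{i=1}^4 2^{j_i/2}\|u_i\|_{L^2_{t,x}},
\]
valid for every dual test function $u_1$ with $\widehat{u}_1 \subseteq I_{k_1}$; after Plancherel this becomes an integral of a fourfold convolution subject to $n_1+\cdots+n_4 = 0$, the setup shared by all estimates of Section \ref{section:MultilinearEstimates}.

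The two gain factors arise from disjoint mechanisms. The factor $2^{k_4^*/2}$ is supplied by a single Cauchy--Schwarz applied to the summation variable attached to the lowest-frequency factor, whose Fourier support has cardinality at most $O(2^{k_4^*})$. The factor $2^{-k_1^*/2}$ comes from the bilinear estimate \eqref{eq:BilinearEstimate} of Lemma \ref{lem:bilinearEstimate}: one pairs the four factors so that each pairwise convolution has output forced by the constraint $n_1+\cdots+n_4=0$ to sit at frequency of order $2^{k_1^*}$, producing a factor $2^{-k_1^*/4}$ per pair. The hypothesis $j_i \geq [\alpha k_1^*]$ with $\alpha \geq 1$ ensures that the residual loss $2^{(j-k_1^*)/4}$ from \eqref{eq:BilinearEstimate} is absorbed by the weights $\prod 2^{j_i/2}$.

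The principal bookkeeping obstacle lies in the frequency-configuration case analysis. When all four $k_i$ are comparable the claimed bound reduces to $O(1)$ and a trivial application of Cauchy--Schwarz plus the bilinear estimate suffices; in the mixed regimes one must pair each high-frequency factor with a low-frequency one so that the bilinear estimate applies with $2^k \sim 2^{k_1^*}$ and the arithmetic works out uniformly. The argument follows the template of Lemmas \ref{lem:ShorttimeHighLowLowHighInteraction}--\ref{lem:ShorttimeHighHighHighLowInteraction}, as the author's own reference to Section \ref{section:MultilinearEstimates} indicates, so the details amount to a routine adaptation that need not be rewritten.
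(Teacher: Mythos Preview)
Your duality reformulation is not correct. By $L^2$ duality the lemma is equivalent to
\[
\Bigl|\int u_1\,\mathcal{N}(u_2,u_3,u_4)\,dt\,dx\Bigr|\;\lesssim\;2^{-k_1^*/2}\,2^{k_4^*/2}\,\Vert u_1\Vert_{L^2}\prod_{i=2}^{4}2^{j_i/2}\Vert u_i\Vert_{L^2}
\]
for \emph{every} $u_1\in L^2$ with spatial Fourier support in $I_{k_1}$. Such a test function carries no modulation localisation, so there is no $j_1$ and no admissible factor $2^{j_1/2}$ on the right. The quadrilinear bound you wrote down (with $\prod_{i=1}^4 2^{j_i/2}$) is the content of Lemma~\ref{lem:multilinearStrichartzEstimates} or Lemma~\ref{lem:RefinedHighLowLowHighMultilinearEstimate}, and it does \emph{not} imply the trilinear $L^2$ bound: summing your estimate over a modulation decomposition of $u_1$ produces $\sum_{j_1}2^{j_1/2}\Vert u_{1,j_1}\Vert_{L^2}$, which is the $X_{k_1}$-norm and not $\Vert u_1\Vert_{L^2}$.

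The proposed mechanism is also internally inconsistent. Pairing all four factors into two bilinear convolutions already accounts for every function, so there is no room for an additional Cauchy--Schwarz step on the lowest-frequency factor; these are two different arguments, not two stages of one. Moreover the arithmetic for the bilinear gain is off: under the hypothesis $j_i\geq [\alpha k_1^*]\geq k_1^*$, a \emph{single} application of \eqref{eq:BilinearEstimate} to a pair of inputs with output frequency $\gtrsim 2^{k_1^*}$ already yields the full factor $2^{-k_1^*/2}$ (not $2^{-k_1^*/4}$), since
\[
2^{j_{\min}/2}\bigl(2^{(j_{\max}-k_1^*)/4}+1\bigr)\;\lesssim\;2^{-k_1^*/2}\,2^{j_{\min}/2}\,2^{j_{\max}/2}
\]
whenever $j_{\max}\geq k_1^*$. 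The intended argument is therefore: put the function at frequency $2^{k_4^*}$ (whether it is the dual $u_1$ or one of the inputs) in $L^1_n$ via Cauchy--Schwarz or Bernstein, contributing $2^{k_4^*/2}$; apply \emph{one} bilinear estimate to a pair of the remaining inputs whose convolution is forced to sit at frequency $\sim 2^{k_1^*}$; and take the last factor in $L^2$ (for the dual) or $L^\infty_t L^2_x$ (for an input, costing its $2^{j_i/2}$). In the configurations where the two high-frequency inputs must be paired but their sum is small (the $High\times High\to Low$ situation), one instead invokes the direct counting argument of Lemma~\ref{lem:RefinedHighLowLowHighMultilinearEstimate}, which is precisely what the reference to Section~\ref{section:MultilinearEstimates} is meant to cover.
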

Having the Lemmata \ref{lem:pointwiseMultiplierBound} and \ref{lem:RefinedStrichartzEstimateL6} at disposal, we can find a bound on $R^{s,a,M}_4$.
\begin{proposition}
\label{prop:symmetrizedEnergyEstimate}
Suppose that $\alpha = 1$. Then, there are functions $\theta(s)>0$, $c(s) \geq 0$ with $c(s)=0$ for $s>1/4$ and $\varepsilon(s)>0$ so that for any $M \in 2^{\N}$ we find the estimate
\begin{equation}
\label{eq:totalRemainderEstimateA}
R^{s,a,M}_4(T,u_1,\ldots,u_4) \lesssim T M^{c(s)} \prod_{i=1}^4 \Vert u_i \Vert_{F^{s,\alpha}(T)}
\end{equation}
to hold provided that $s>-1/2$ and $a \in S^s_\varepsilon$.
\end{proposition}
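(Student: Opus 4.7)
The plan is to estimate the quadrilinear form $R_4^{s,a,M}$ by a dyadic decomposition combined with the pointwise multiplier bound of Lemma \ref{lem:pointwiseMultiplierBound} and the multilinear Strichartz estimates from Section \ref{section:MultilinearEstimates}. After decomposing $u_i = \sum_{k_i} P_{k_i} u_i$---with each $k_i \leq \log_2 M + O(1)$ enforced by the frequency cutoff $|n_j| \leq M$---and ordering the frequencies as $k_1 \leq k_2 \leq k_3 \leq k_4$, Lemma \ref{lem:pointwiseMultiplierBound} replaces $|\psi_{s,a}(\overline{n})|$ by $\tilde a(2^{k_1^*}) \cdot 2^{-2 k_1^*} \cdot |\Omega(\overline{n})|$. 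The $\Omega$-factor is then absorbed into the modulations via the algebraic identity $\sum_{i=1}^{4}(\tau_i - n_i^3) = -\Omega(\overline{n})$, valid under the convolution constraint, which forces the largest modulation to satisfy $2^{j_{\max}} \gtrsim |\Omega|$.

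Next, I would partition $[0,T]$ into roughly $T \cdot 2^{k_1^*}$ intervals $I_\ell$ of length $2^{-k_1^*}$, the natural time scale of the $F^1_{k_1^*}$-space. On each $I_\ell$ a smooth time-localization allows every $P_{k_i} u_i$ to be treated as an element of $F^1_{k_i}$ whose $X_{k_i}$-norm is controlled by $\Vert P_{k_i} u_i \Vert_{F^1_{k_i}(T)}$, via Lemma \ref{lem:sharpTimeCutoffAlmostBounded} and \eqref{eq:XkEstimateIII}. Trading $|\Omega|$ for $2^{j_{\max}}$ inside each short interval, the remaining quadrilinear convolution $(f_{k_1,j_1} * f_{k_2,j_2} * f_{k_3,j_3})^{\mathfrak{N}} \cdot f_{k_4,j_4}$ is controlled by the $L^4$-Strichartz bound \eqref{eq:multilinearL4StrichartzEstimate}, its $L^6$-variant \eqref{eq:multilinearL6StrichartzEstimate}, or Lemma \ref{lem:RefinedStrichartzEstimateL6}, depending on whether one is in the regime $High \times Low \times Low \rightarrow High$, $High \times High \times Low \rightarrow High$, $High \times High \times Low \rightarrow Low$, or $High \times High \times High \rightarrow High$. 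Summing the $2^{j_i/2}$-weights into the $F^1_{k_i}(T)$-norms and then summing over the $T \cdot 2^{k_1^*}$ intervals produces, per dyadic configuration, a bound of the shape $T \cdot \tilde a(2^{k_1^*}) \cdot 2^{-\beta(s) k_1^*} \prod_{i=1}^{4} \Vert P_{k_i} u_i \Vert_{F^1_{k_i}(T)}$ for some $\beta(s) > 0$ depending on the regime.

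Finally, dyadic summation in $k_1, \ldots, k_4$, after extracting the $2^{2 s k_i}$ weights of the $F^{s,1}(T)$-norms, reduces matters to whether the remaining exponent $2(s+\varepsilon) - 2s - \beta(s)$ on $2^{k_1^*}$ is non-positive. For $s > 1/4$ one can choose $\varepsilon = \varepsilon(s)$ small enough so that this exponent is negative in every regime, the sum converges without any frequency cutoff, and one may take $c(s) = 0$. For $-1/2 < s \leq 1/4$ the exponent becomes positive in the hardest regime; the cap $k_1^* \leq \log_2 M + O(1)$ then absorbs the loss as $M^{c(s)}$ with $c(s) \geq 0$ finite, and the threshold $s > -1/2$ is precisely the one below which no finite $c(s)$ would suffice. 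The main obstacle will be the $High \times High \times High \rightarrow High$ regime, where $|\Omega| \sim 2^{2 k_1^* + k_3^*}$ degenerates when $k_3^*$ is not genuinely smaller than $k_1^*$; there, as in Lemma \ref{lem:ShorttimeHighHighHighHighInteraction}, one must apply two independent bilinear estimates \eqref{eq:BilinearEstimate} with frequencies of the same sign paired against one another, rather than a single multilinear Strichartz bound, in order to close the estimate.
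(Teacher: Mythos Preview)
Your overall strategy matches the paper's: dyadic frequency decomposition, the pointwise multiplier bound from Lemma~\ref{lem:pointwiseMultiplierBound}, short-time localization at scale $2^{-k_1^*}$, and case-by-case multilinear estimates. The gap is in the step ``trading $|\Omega|$ for $2^{j_{\max}}$''. If you literally bound $|\psi_{s,a}| \lesssim (\tilde a(2^{k_1^*})/2^{2k_1^*})\,2^{j_{\max}}$ and then apply any of the multilinear bounds you list, you are left with a full extra factor $2^{j_{\max}/2}$ on top of the $\prod_i 2^{j_i/2}\|f_{k_i,j_i}\|$ structure, and the sum over the modulation of the high-modulation factor diverges against the $X_k$-norm $\sum_j 2^{j/2}\|f_{k,j}\|$. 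What the paper does instead is introduce an \emph{additional} dyadic decomposition of the resonance, $|\Omega|\sim 2^k$, with $k$ ranging over the admissible window (for instance $k_1^*\le k\le 3k_1^*$ in the $High\times High\times High\times High$ case and $2k_1^*\le k\le 2k_1^*+k_3^*$ in the $High\times High\times Low\times Low$ case). For fixed $k$ the multiplier contributes a \emph{fixed} weight $2^k$, the resonance constraint $j_{\max}\ge k$ converts the duality factor $\|f_{k_{i^*},j_{\max}}\|=2^{-j_{\max}/2}\bigl(2^{j_{\max}/2}\|f_{k_{i^*},j_{\max}}\|\bigr)$ into a gain $2^{-k/2}$, and only then does the $j$-sum close; the final sum over $k$ produces the net $|\Omega|^{1/2}$ visible in the paper's display \eqref{eq:HighHighHighHighSumEstimate}. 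This decomposition is the mechanism by which only half of $|\Omega|$ is actually paid, and it is missing from your outline.

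Your last paragraph also misdiagnoses the $High\times High\times High\times High$ case. The relation $|\Omega|\sim 2^{2k_1^*+k_3^*}$ you quote is the $High\times High\times Low\times Low$ resonance; when all four frequencies are comparable, $|\Omega|$ instead ranges over the whole interval $[2^{k_1^*},2^{3k_1^*}]$, and it is the \emph{lower} end (small resonance, hence little smoothing) that is delicate. The paper does not switch to bilinear estimates there: it applies the same refined trilinear bound, Lemma~\ref{lem:RefinedStrichartzEstimateL6}, uniformly across all interaction types, coupled with the $|\Omega|$-decomposition just described. The bilinear pairing you propose would also close once that decomposition is in place, but it is not the missing ingredient.
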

\begin{proof}
The strategy of the proof is as follows: Firstly, we apply a dyadic decomposition on the spatial frequencies. That is we estimate $R^{s,a,M}_4(T,u_1,\ldots,u_4)$ for frequency localized functions $u_i$, where $P_{k_i} u = u$, $k_i \leq \log_2(M)$. For these functions we will show the estimate
\begin{equation}
\label{eq:localizedRemainderEstimateA}
R^{s,a,M}_4(T,u_1,\ldots,u_4) \lesssim T^{\theta} \prod_{i=1}^4 2^{(s-)k_i} \Vert u_i \Vert_{F^{\alpha}_{k_i}}
\end{equation}
for $s>1/4$. The slightly less regularity than $s$ on the right-hand side allows us to sum over dyadic blocks in the end. With the frequencies being smaller than $M$, from \eqref{eq:localizedRemainderEstimateA} for $s>1/4$ follows already \eqref{eq:totalRemainderEstimateA} for $s>-1/2$.

Next, we localize time antiproportionally to the highest frequency. Let $\gamma: \R \rightarrow [0,1]$ be a smooth function with compact support in $[-1,1]$ and the property
\begin{equation*}
\sum_{m \in \Z} \gamma^4(x-m) \equiv 1.
\end{equation*}
With this function, we write
\begin{equation*}
\begin{split}
R^{s,a}_4(T,u_1,\ldots,u_4) &= \int_0^T dt \sum_{\substack{n_1+\ldots+n_4=0,\\(*)}} \psi_{s,a}(\overline{n}) \prod_{i=1}^4 \hat{u}_i(t,n_i) \\
&= \int_0^T dt \sum_{m \in \Z} \sum_{\substack{n_1+\ldots+n_4=0,\\(*)}} \psi_{s,a}(\overline{n}) \gamma(2^{\alpha k_1^*} t -m) \hat{u}_1(t,n_1) \times \\ 
&\quad \quad \prod_{i=2}^4 \gamma(2^{\alpha k_1^*}t-m) \hat{u}_i(t,n_i) 
\end{split}
\end{equation*}
Already note that there are $\mathcal{O}(T 2^{k_1^*})$ values of $m$, for which the above expression does not vanish.

With $f_{k_i}(\tau,\xi) = \mathcal{F}_{t,x}(\gamma(2^{\alpha k_1^*} t-m) {u}_i)(\tau,\xi)$, we localize modulation\footnote{Strictly speaking, we had to consider $f_{m,k_i}$ or $f_{m,k_i,j_i}$, respectively, tracking the additional dependence on $m$. Since all the estimates below are uniform in $m$, we choose to drop dependence on $m$ for the sake of brevity.}
\begin{equation*}
f_{k_i,j_i} = 
\begin{cases}
	\eta_{\leq j_i}(\tau - n^3) f_{k_i}, \; j_i = [\alpha k_1^*], \\
	\eta_{j_i}(\tau - n^3) f_{k_i}, \; j_i > [\alpha k_1^*].
\end{cases}
\end{equation*}
In the above sum over $m$, in case of nontrivial contribution, we have to distinguish between the two cases:
\begin{equation*}
\begin{split}
\mathcal{A} &= \{ m \in \Z | 1_{[0,T]}(\cdot) \gamma(2^{\alpha k_1^*} \cdot - m) = \gamma(2^{\alpha k_1^*} \cdot - m) \},\\
\mathcal{B} &= \{ m \in \Z | 1_{[0,T]}(\cdot) \gamma(2^{\alpha k_1^*} \cdot - m) \neq \gamma(2^{\alpha k_1^*} \cdot - m) \text{ and } 1_{[0,T]}(\cdot) \gamma(2^{\alpha k_1^*} \cdot - m) \neq 0 \}.
\end{split}
\end{equation*}

Note that $\# \mathcal{B} \leq 4$. Consequently, we save a factor $2^{ k_1^*}$ compared to $\mathcal{A}$, and we only sketch the necessary modifications after treating the cases from $\mathcal{A}$.\\
Therefore, we focus on estimates for $m \in \mathcal{A}$, where $\# \mathcal{A} \lesssim T 2^{k_1^*}$.

Firstly, we estimate $High \times High \times High \times High$-interaction. That means all frequencies are comparable, and we suppose that $|k_1-k_i| \leq 20$ for $i =2,3,4$.\\
Recall the pointwise bound from Lemma \ref{lem:pointwiseMultiplierBound}. To make effective use, we introduce another dyadic sum governing the size of $|\Omega(\overline{n})|$. Below, we take $|\Omega(\overline{n})| \sim 2^k$, where $k_1^* \leq k \leq 3k_1^*$ and sum over $k$ in the end.\\
We observe
\begin{equation}
\label{eq:HighHighHighHighSumEstimate}
\begin{split}
\sum_{\substack{k_1^* \leq k \leq 3k_1^*,\\|\Omega|\sim 2^k}} \sum_{m \in \mathcal{A}} \frac{\tilde{a}(2^{k_1^*})}{2^{2k_1^*}} |\Omega|^{1/2} &\lesssim T \sum_{\substack{k_1^* \leq k \leq 3k_1^*,\\|\Omega|\sim 2^k}} \frac{\tilde{a}(2^{k_1^*})}{2^{k_1^*}} 2^{k/2} \lesssim T \tilde{a}(2^{k_1^*}) 2^{k_1^*/2}\\
&\lesssim T \prod_{i=1}^4 2^{(s-)k_i}
\end{split}
\end{equation}
provided that $s>1/4$.

Further, we suppose due to symmetry that $j_1 = j_1^*$. Together with the resonance relation $2^{j_1^*} \gtrsim |\Omega(\overline{n})| \sim 2^k$ and Lemma \ref{lem:RefinedStrichartzEstimateL6}, we find
\begin{equation}
\label{eq:EnergyEstimateWithResonance}
\begin{split}
&\quad \int_{\tau_1+\ldots+\tau_4=0} \sum_{\substack{ n_1+\ldots+n_4=0,\\(*)}} \prod_{i=1}^4 f_{k_i,j_i}(\tau_i,n_i) \\
&\lesssim 2^{j_1/2} \Vert f_{k_1,j_1} \Vert_{L_\tau^2 \ell^2_n} 2^{-k/2} \Vert 1_{I_{k_1}} (f_{k_2,j_2}*f_{k_3,j_3}*f_{k_4,j_4})^{\mathfrak{N}} \Vert_{L_\tau^2 \ell_n^2} \\
&\lesssim 2^{-k/2} \prod_{i=1}^4 2^{j_i/2} \Vert f_{k_i,j_i} \Vert_{L^2_{\tau} \ell^2_n},
\end{split}
\end{equation}
and \eqref{eq:localizedRemainderEstimateA} follows from \eqref{eq:HighHighHighHighSumEstimate} and \eqref{eq:EnergyEstimateWithResonance} due to \eqref{eq:XkEstimateIII}.

We turn to the cases described by the set $\mathcal{B}$. We have to estimate the expression
\begin{equation*}
\frac{\tilde{a}(2^{k_1^*})}{2^{2k_1^*}} \int_{\tau_1+\ldots+\tau_4=0} \sum_{\substack{n_1+\ldots+n_4=0,\\(*)}} |\Omega(\overline{n})| \prod_{i=1}^4 f_{k_i,j_i}(\tau_i,n_i),
\end{equation*}
where 
\begin{equation*}
f_{k_1}(\tau,n) = \mathcal{F}_{t,x}[1_{[0,T]}(\cdot) \gamma(2^{k_1^*} \cdot -m) u_{k_1}](\tau,n) \text{ and } 1_{[0,T]}(\cdot) \gamma(2^{k_1^*} \cdot - m) \neq \gamma(2^{k_1^*} \cdot - m).
\end{equation*}
The additional decomposition in modulation is given by $f_{k_1} = \sum_{j \geq 0} f_{k_1,j}$.\\
Suppose below that $j_1=j_1^*$. Like above we find
\begin{equation*}
\begin{split}
&\quad \sum_{k,j_i} \frac{\tilde{a}(2^{k_1^*})}{2^{2k_1^*}} 2^k \Vert f_{k_1,j_1} \Vert_{L^2_{\tau} \ell^2_n} \Vert 1_{I_{k_1}} (f_{k_2,j_2}*f_{k_3,j_3}*f_{k_4,j_4})^{\mathfrak{N}} \Vert_{L^2_{\tau} \ell^2_n} \\
&\lesssim \sum_{k,j_i} \frac{\tilde{a}(2^{k_1^*})}{2^{2k_1^*}} 2^k 2^{j_1(1/2-\varepsilon)} 2^{-j_1(1/2-\varepsilon)} \Vert f_{k_1,j_1} \Vert_{L^2_{\tau} \ell^2_n} \Vert 1_{I_{k_1}} (f_{k_2,j_2}*f_{k_3,j_3}*f_{k_4,j_4})^{\mathfrak{N}} \Vert_{L^2_{\tau} \ell^2_n} \\
&\lesssim \frac{\tilde{a}(2^{k_1^*})}{2^{2k_1^*}} 2^{(3/2+\varepsilon)k_1^*} T^\theta \prod_{i=1}^4 \Vert u_i \Vert_{F^\alpha_{k_i}}
\end{split}
\end{equation*}
The ultimate estimate follows from Lemma \ref{lem:sharpTimeCutoffAlmostBounded}.

Also in the case $j_1 \neq j_1^*$, it is easy to see from Lemma \ref{lem:sharpTimeCutoffAlmostBounded} that the loss in highest modulation is more than compensated from the fact that $\# \mathcal{B} \leq 4$ independently of $k_1^*$.\\
We turn to $High \times High \times Low \times Low$-interaction. Suppose that $k_1 \leq k_2 \leq k_3-5$. From Lemma \ref{lem:pointwiseMultiplierBound} and introducing the dyadic sum $\sum_{\substack{2k_1^* \leq k \leq 2k_1^* + k_3^*,\\|\Omega| \sim 2^k}}$, we find
\begin{equation*}
\begin{split}
&\quad \sum_{\substack{2k_1^* \leq k \leq 2k_1^* + k_3^*+5,\\|\Omega| \sim 2^k}} \sum_{m \in \mathcal{A}} \frac{\tilde{a}(2^{k_1^*})}{2^{2k_1^*}} |\Omega|^{1/2} 2^{k_4^*/2} 2^{-k_1^*/2} \\
&\lesssim T 2^{k_1^*} \sum_{0 \leq k \leq k_3^*} \frac{\tilde{a}(2^{k_1^*})}{2^{k_1^*}} 2^{k/2} 2^{k_4^*/2} 2^{-k_1^*/2} \\
&\lesssim T \tilde{a}(2^{k_1^*}) 2^{k_3^*/2} 2^{k_4^*/2} 2^{-k_1^*/2} \lesssim T \prod_{i=1}^4 2^{(s-)k_i}
\end{split}
\end{equation*}
 provided that $s>1/4$.
 
Suppose that $j_1=j_1^*$. Together with the resonance identity $2^{j_1^*} \gtrsim |\Omega(\overline{n})| \sim 2^k$ and Lemma \ref{lem:RefinedStrichartzEstimateL6}, it follows
\begin{equation*}
\begin{split}
&\quad \int_{\tau_1+\ldots+\tau_4=0} \sum_{\substack{ n_1+\ldots+n_4=0,\\(*)}} \prod_{i=1}^4 f_{k_i,j_i}(\tau_i,n_i) \\
&\lesssim 2^{j_1/2} \Vert f_{k_1,j_1} \Vert_{L_\tau^2 \ell^2_n} 2^{-k/2} \Vert 1_{I_{k_1}} (f_{k_2,j_2}*f_{k_3,j_3}*f_{k_4,j_4})^{\mathfrak{N}} \Vert_{L_\tau^2 \ell_n^2} \\
&\lesssim 2^{-k/2} 2^{k_4^*/2} 2^{-k_1^*/2} \prod_{i=1}^4 2^{j_i/2} \Vert f_{k_i,j_i} \Vert_{L^2_{\tau} \ell^2_n}.
\end{split}
\end{equation*}
We estimate $High \times High \times High \times Low$-interaction. Suppose that $|k_1-k_2| \leq 10, |k_1-k_3| \leq 10$ and $k_4 \leq k_1-15$.\\
Lemma \ref{lem:pointwiseMultiplierBound} together with the magnitude of the resonance function $|\Omega| \sim 2^{3k_1^*}$ leads us to consider
\begin{equation*}
\begin{split}
\sum_{m \in \mathcal{A}} \frac{\tilde{a}(2^{k_1^*})}{2^{2k_1^*}} |\Omega|^{1/2} 2^{k_4^*/2} 2^{-k_1^*/2} &\lesssim T \frac{\tilde{a}(2^{k_1^*})}{2^{k_1^*}} |\Omega|^{1/2} 2^{k_4^*/2} 2^{-k_1^*/2} \lesssim T \tilde{a}(2^{k_1^*}) 2^{k_4^*/2}\\
&\lesssim T \prod_{i=1}^4 2^{(s-)k_i},
\end{split}
\end{equation*}
which proves the claim together with
\begin{equation*}
\begin{split}
&\quad \int_{\tau_1+\ldots+\tau_4=0} \sum_{\substack{ n_1+\ldots+n_4=0,\\(*)}} \prod_{i=1}^4 f_{k_i,j_i}(\tau_i,n_i) \\
&\lesssim 2^{j_1/2} \Vert f_{k_1,j_1} \Vert_{L_\tau^2 \ell_n^2} |\Omega(\overline{n})|^{-1/2} \Vert 1_{I_{k_1}} (f_{k_2,j_2}*f_{k_3,j_3}*f_{k_4,j_4})^{\mathfrak{N}} \Vert_{L_\tau^2 \ell_n^2} \\
&\lesssim |\Omega|^{-1/2} 2^{k_4^*/2} 2^{-k_1^*/2} \prod_{i=1}^4 2^{j_i/2} \Vert f_{k_i,j_i} \Vert_{L^2_{\tau} \ell^2_n}.
\end{split}
\end{equation*}
The proof is complete. 
\end{proof}
We prove the estimate for the boundary term:
\begin{lemma}
\label{lem:estimateBoundaryTerm}
Suppose that $s \in (-1/2,1/2)$ and $T \in (0,1]$. Then, there is $\varepsilon(s)>0$ and $d(s)>0$ such that the following estimate holds:
\begin{equation*}
B^{s,a,M}_4(T,u) \lesssim M^{-d(s)} \Vert u \Vert^4_{F^{s,\alpha}(T)}.
\end{equation*}
\end{lemma}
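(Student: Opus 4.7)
\textbf{Proof plan for Lemma \ref{lem:estimateBoundaryTerm}.}

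The plan is to estimate $B_4^{s,a,M}(T,u)$ pointwise at the endpoints $t_0 \in \{0, T\}$ separately. Applying the pointwise multiplier bound of Lemma \ref{lem:pointwiseMultiplierBound} cancels the resonance denominator: for $n_i \in I_{k_i}$ one has
\begin{equation*}
\left| \frac{\psi_{s,a}(\overline n)}{\Omega(\overline n)} \right| \lesssim \frac{\tilde a(2^{k_1^*})}{2^{2 k_1^*}} \lesssim 2^{(2s + 2\varepsilon - 2) k_1^*}.
\end{equation*}
This is independent of $\Omega$, so the non-vanishing of $\Omega$ (guaranteed by $(*)$) is only needed qualitatively to justify the integration by parts that produced $B_4^{s,a,M}$ in the first place.

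After Littlewood-Paley decomposition $u = \sum_k P_k u$, at fixed $t_0 \in \{0,T\}$ the contribution of a dyadic block $(k_1, \ldots, k_4)$ to the endpoint of $B_4^{s,a,M}$ is controlled (using Parseval on $\T$, Hölder and Bernstein $\|P_k v\|_{L^\infty_x} \lesssim 2^{k/2} \|P_k v\|_{L^2_x}$) by
\begin{equation*}
2^{(2s + 2\varepsilon - 2) k_1^*} \, \bigl| \textstyle\sum_{\sum n_i = 0} \prod_i \widehat{P_{k_i} u}(t_0, n_i) \bigr| \lesssim 2^{(2s + 2\varepsilon - 2) k_1^*} 2^{(k_3^* + k_4^*)/2} \prod_{i=1}^4 \|P_{k_i^*} u(t_0)\|_{L^2_x}.
\end{equation*}
Two structural reductions remain: the constraint $n_1 + \ldots + n_4 = 0$ forces $k_1^* \sim k_2^*$, and the condition $|n_j^*| \geq M$ for at least one $j$ forces $k_1^* \geq \log_2 M - O(1)$.

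Writing $a_k := 2^{sk} \|P_k u(t_0)\|_{L^2_x}$, so that $\|(a_k)\|_{\ell^2} = \|u(t_0)\|_{H^s}$, the $k_1^*$-block is bounded by
\begin{equation*}
2^{(2\varepsilon - 2) k_1^*} \, 2^{(1/2 - s)(k_3^* + k_4^*)} \, a_{k_1^*} a_{k_2^*} a_{k_3^*} a_{k_4^*}.
\end{equation*}
Since $s < 1/2$, Cauchy-Schwarz gives $\sum_{k_j^* \leq k_1^*} 2^{(1/2 - s) k_j^*} a_{k_j^*} \lesssim 2^{(1/2 - s) k_1^*} \|u(t_0)\|_{H^s}$ for $j = 3, 4$. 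Summing further over $k_2^* \sim k_1^*$ (only $O(1)$ terms) and over $k_1^* \geq \log_2 M$, another Cauchy-Schwarz in $k_1^*$ produces the factor $M^{2\varepsilon - 1 - 2s} \|u(t_0)\|_{H^s}^2$, provided $2\varepsilon - 1 - 2s < 0$, which holds for $s > -1/2$ upon taking $\varepsilon = \varepsilon(s) < s + 1/2$. Collecting everything,
\begin{equation*}
B_4^{s,a,M}(T, u) \lesssim M^{-(1 + 2s - 2\varepsilon)} \|u(t_0)\|_{H^s}^4,
\end{equation*}
and Lemma \ref{lem:embeddingShorttimeSpaces}(ii) upgrades $\|u(t_0)\|_{H^s}$ to $\|u\|_{F^{s,\alpha}(T)}$, giving $d(s) := 1 + 2s - 2\varepsilon(s) > 0$.

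The main obstacle is the tension at the endpoints of the interval $(-1/2, 1/2)$: the geometric summation in $k_3^*, k_4^*$ degenerates as $s \to 1/2^-$, while the summation in $k_1^*$ degenerates as $s \to (-1/2)^+$. Both must be controlled by the single parameter $\varepsilon$ coming from the symbol class $S^s_\varepsilon$, so a small but strictly positive $\varepsilon(s)$ must be chosen to absorb the logarithmic loss in the bound $\tilde a(2^k) \lesssim 2^{2(s+\varepsilon) k}$ without destroying either geometric series. The analysis is uniform in the time interval $[0, T]$, reflecting the fact, highlighted in the paragraph preceding the lemma, that the boundary term is genuinely insensitive to the length of the time interval; the smallness of $B_4^{s,a,M}$ must be purchased entirely through the high-frequency cutoff $M$.
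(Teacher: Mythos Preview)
Your proof is correct and follows essentially the same approach as the paper: bound $|\psi_{s,a}/\Omega|$ by $\tilde a(2^{k_1^*})/2^{2k_1^*}$, apply H\"older plus Bernstein to get $2^{(k_3^*+k_4^*)/2}$, sum the low frequencies geometrically using $s<1/2$, and sum the high frequencies $k_1^*\ge \log_2 M$ geometrically using $s>-1/2$, finishing with the embedding $F^{s,\alpha}(T)\hookrightarrow C([0,T],H^s)$. One minor caveat: Lemma~\ref{lem:pointwiseMultiplierBound} is stated only for $s>0$, so for $-1/2<s\le 0$ you should invoke the analogous bound recorded in Remark~\ref{rem:RegularityMultiplierNegativeSobolevSpacesMKDV} (as the paper does); the rest of your argument is unchanged.
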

\begin{proof}
We localize frequencies on a dyadic scale, i.e., $P_{k_i} u_i = u_i$. Suppose by means of symmetry that $k_1 \geq k_2 \geq k_3 \geq k_4$ and set $m=\log_2(M)$. By virtue of Lemma \ref{lem:embeddingShorttimeSpaces}, it will be enough to derive a bound in terms of the Sobolev norms. We use a pointwise bound for $\psi_{s,a}$, which hinges on the sign of $s$. In the following we only consider $s>0$. It is straight-forward to check that the same argument yields the bound for $-1/2<s \leq 0$ using the bound from Remark \ref{rem:RegularityMultiplierNegativeSobolevSpacesMKDV} instead of the one due Lemma \ref{lem:estimateBoundaryTerm}.

For the evaluation at $t=0$, we have due to Lemma \ref{lem:pointwiseMultiplierBound} and an application of H\"older's inequality in position space
\begin{equation*}
\begin{split}
&\quad \frac{\tilde{a}(2^{k_1})}{2^{2k_1}} \sum_{\substack{n_1+\ldots+n_4=0,\\(*),|n_1| \geq M}} |\hat{u}_1(0,n_1)| |\hat{u}_2(0,n_2)| |\hat{u}_3(0,n_3)| |\hat{u}_4(0,n_4)|\\
&\lesssim \frac{\tilde{a}(2^{k_1})}{2^{2k_1}} \left( \Vert u_1(0) \Vert_{L_x^2} \Vert u_2(0) \Vert_{L_x^2} \Vert u^\prime_3(0) \Vert_{L_x^\infty} \Vert u^\prime_4(0) \Vert_{L_x^\infty} \right),
\end{split}
\end{equation*}
where $u_l^\prime(0,x) = \sum |\hat{u}_l(0,k)| e^{ikx}$, $l \in \{3,4\}$.

After applying Bernstein's inequality, we sum up the dyadic pieces
\begin{equation*}
\begin{split}
&\quad \sum_{\substack{k_1, k_2 \geq k_3 \geq k_4 \geq 0,\\(*),k_1 \geq m}} \frac{\tilde{a}(2^{k_1})}{2^{2k_1}} \Vert u_1(0) \Vert_{L_x^2} \Vert u_2(0) \Vert_{L_x^2} 2^{k_3/2} \Vert u_3(0) \Vert_{L_x^2} 2^{k_4/2} \Vert u_4(0) \Vert_{L_x^2} \\
&\lesssim \sum_{k_1, k_2 \geq k_3, k_1 \geq m} \frac{\tilde{a}(2^{k_1})}{2^{2k_1}} \Vert u_1(0) \Vert_{L_x^2} \Vert u_2(0) \Vert_{L_x^2} 2^{k_3/2} \Vert u_3(0) \Vert_{L_x^2} 2^{(1/2-s)k_3} \Vert u(0) \Vert_{H_x^s} \\
&\lesssim \sum_{k_1, k_2 \geq m} \frac{\tilde{a}(2^{k_1})}{2^{k_1}} 2^{-2sk_1} \Vert u_1(0) \Vert_{L_x^2} \Vert u_2(0) \Vert_{L_x^2} \Vert u(0) \Vert_{H_x^s}^2 \\
&\lesssim M^{-d(s)} \Vert u(0) \Vert_{H_x^{s}}^4,
\end{split}
\end{equation*}
which we can arrange as long as $s>-1/2$ choosing $\varepsilon = \varepsilon(s)$ sufficiently small. 
\end{proof}
Next, we derive the crucial bound for the correction term $R_6^{s,a,M}(T,u_1,\ldots,u_6)$. With the frequency constraint irrelevant here, we drop it in the following.
\begin{proposition}
\label{prop:EstimateCorrectionTerm}
Let $T \in (0,1]$. For $s>0$ and $\alpha = 1$, there is $\theta(s) > 0$ such that we find the following estimate to hold:
\begin{equation}
\label{eq:totalRemainderEstimateB}
R^{s,a,M}_6(T,u_1,\ldots,u_6) \lesssim T^\theta \prod_{i=1}^6 \Vert u_i \Vert_{F^{s,\alpha}(T)}
\end{equation}
\end{proposition}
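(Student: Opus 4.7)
My plan closely parallels the proof of Proposition \ref{prop:symmetrizedEnergyEstimate}: exploit the pointwise multiplier bound from Lemma \ref{lem:pointwiseMultiplierBound}, dyadically decompose the frequencies, time-localize at the scale of the highest frequency, and close by multilinear Strichartz bounds. First, I would split $R_6 = I + II$. The resonant piece $I$ collapses the three inner factors to $|\hat{u}(t,n_1)|^2 \hat{u}(t,n_1)$, reducing it to a quartilinear form; after extracting $\Vert u(t) \Vert_{L^2}^2$ trivially in position space (or summing dyadically in $n_1$), it becomes a minor variant of Proposition \ref{prop:symmetrizedEnergyEstimate} and is strictly easier. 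The main work is $II$, a genuinely sextilinear form in $n_2, \ldots, n_7$ with $n_2 + \cdots + n_7 = 0$, $n_1 = n_5 + n_6 + n_7$, and symbol $M(\overline{n}) = \psi_{s,a}(\overline{n}) n_1/\Omega(\overline{n})$.

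After dyadic decomposition $n_i \in I_{k_i}$ for $i = 2, \ldots, 7$ (and $n_1 \in I_l$ with $l \leq \max(k_5, k_6, k_7) + O(1)$), Lemma \ref{lem:pointwiseMultiplierBound} yields $|M(\overline{n})| \lesssim \tilde{a}(2^{k_1^*}) 2^{l - 2k_1^*}$ where $k_1^* = \max(l, k_2, k_3, k_4)$. Recalling $\tilde{a}(2^k) \sim 2^{2k(s+\varepsilon)}$, this is an effective smoothing of order $2^{(2s - 2 + O(\varepsilon))k_1^*} 2^l$ relative to a plain sextilinear form. I would then time-localize on intervals of length $\sim 2^{-k_1^*}$ via a smooth partition of unity as in Proposition \ref{prop:symmetrizedEnergyEstimate}; on $[0,T]$ this produces $O(T \cdot 2^{k_1^*})$ non-vanishing contributions. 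On each time slice, I apply Plancherel to recast the form as a space-time integral over six factors and bound it by iterated Hölder together with the $L^4_{t,x}$-Strichartz estimate \eqref{eq:L4StrichartzEstimate} (and, in tighter subcases, the bilinear estimate \eqref{eq:BilinearEstimate} and the refined bounds of Lemma \ref{lem:RefinedHighLowLowHighMultilinearEstimate}).

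Summing over modulations and trading a sliver of modulation regularity for $T^\theta$ via Lemma \ref{lem:tradingModulationRegularity}, one assembles the sextilinear estimate and sums dyadically over $k_2, \ldots, k_7$. The multiplier gain $2^{(2s - 2 + O(\varepsilon))k_1^*}$ combined with the time-counting factor $2^{k_1^*}$ yields a net $2^{(2s - 1 + O(\varepsilon))k_1^*}$, which is summable against the six Sobolev weights $\prod_{i=2}^{7} 2^{sk_i}$ for any $s > 0$, with room to spare since the total frequency bound is distributed across six factors rather than four. Boundary-in-time cutoffs (the analog of the set $\mathcal{B}$ from Proposition \ref{prop:symmetrizedEnergyEstimate}) are handled as there via Lemma \ref{lem:sharpTimeCutoffAlmostBounded}, the loss being absorbed by the $O(1)$ cardinality of this exceptional set.

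The principal obstacle I anticipate is the configuration in which two of the inner frequencies $n_5, n_6, n_7$ are large but nearly cancel, so that $l = \log|n_1|$ is much smaller than $\max(k_5, k_6, k_7)$. Then the multiplier decay pays only for $k_2, k_3, k_4$ (since $k_1^*$ no longer sees the cancelling inner frequencies), and the two large Sobolev weights on the cancelling inner pair must be recovered purely from Strichartz bounds on those three factors. To handle this case I would further decompose the inner triple according to the size of the inner resonance $3(n_5+n_6)(n_5+n_7)(n_6+n_7)$ and apply the bilinear estimate \eqref{eq:BilinearEstimate} along each near-cancelling pair to recover the derivatives, in analogy with the $P^l_\Omega$ decomposition used in Lemma \ref{lem:ShorttimeHighHighLowLowInteraction}.
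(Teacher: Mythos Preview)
Your overall architecture is right and tracks the paper closely for the term $I(T,u)$ and for Case~A of $II(T,u)$ (when the maximum frequency lies among the outer quadruple $n_1,n_2,n_3,n_4$). The genuine gap is in your handling of Case~B, where the two largest frequencies sit in the inner triple $n_5,n_6,n_7$ and $b_1^* := \max(k_5,k_6,k_7) \gg a_1^* := \max(k_2,k_3,k_4)$. You propose to time-localize at the overall maximum $b_1^*$; carrying your scheme through (multiplier bound, Cauchy--Schwarz in $n_1$, refined $L^6$ on each triple) then produces
\[
T\, 2^{b_1^*/2}\, \tilde a(2^{a_1^*})\, 2^{-a_1^*/2}\,\prod_{i=2}^7 \Vert u_i\Vert_{F_{k_i}^{\alpha}},
\]
and the surviving factor $2^{b_1^*/2}$ forces $s>1/4$, not $s>0$. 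Your proposed repair via the inner resonance $\Omega^{(1)}(n_1,n_5,n_6,n_7)$ does not close: that resonance controls the maximum among $j_5,j_6,j_7$ \emph{and the modulation of the virtual factor at $n_1$}, but the latter is the output of the outer product $\hat u_2*\hat u_3*\hat u_4$ and is not one of your six modulation variables, so you cannot convert the inner resonance into a gain on any actual $j_i$.

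The paper's key idea here is different and simpler: in Case~B one time-localizes only at the coarser scale $2^{-a_1^*}$, giving a time-counting factor $T\,2^{a_1^*}$ rather than $T\,2^{b_1^*}$. The outer triple is then already at the correct scale. For the inner triple one subsequently refines the time localization to $2^{-b_1^*}$ using $L^2_t$-orthogonality of the finer partition, which costs only $2^{(b_1^*-a_1^*)/2}$ instead of the full $2^{b_1^*-a_1^*}$. After applying the refined $L^6$ estimate (Lemma~\ref{lem:RefinedStrichartzEstimateL6}) to each triple, all $b_1^*$-dependence cancels and one is left with $T\,\tilde a(2^{a_1^*})$, which sums for any $s>0$. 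This two-scale time localization with orthogonality is the missing ingredient in your plan.
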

\begin{proof}
We use the same reductions like in the proof of Proposition \ref{prop:symmetrizedEnergyEstimate}. Firstly, apply a decomposition into intervals in frequency space. We estimate $R^{s,a,M}_6(T,u_1,\ldots,u_6)$ for frequency localized functions $u_i$ satisfying $P_{k_i} u_i = u_i$. For these functions we show the estimate
\begin{equation}
\label{eq:localizedRemainderEstimateB}
R^{s,a,M}_6(T,u_1,\ldots,u_6) \lesssim T \prod_{i=1}^6 2^{(s-)k_i} \Vert u_i \Vert_{F^{\alpha}_{k_i}},
\end{equation}
and \eqref{eq:totalRemainderEstimateB} follows from \eqref{eq:localizedRemainderEstimateB} by the above arguments.

Estimate of $I(T)$:\\
Localize time antiproportionally to the highest frequency. Like above let $\gamma: \R \rightarrow [0,1]$ be a smooth function with compact support in $[-1,1]$ and the property
\begin{equation*}
\sum_{m \in \Z} \gamma^6(x-m) \equiv 1.
\end{equation*}
With this function write
\begin{equation*}
\begin{split}
I(T,u) &= \int_0^T dt \sum_{\substack{n_1+\ldots+n_4=0,\\(*)}} \frac{\psi_{s,a}(\overline{n}) n_1}{\Omega(\overline{n})} |\hat{u}(t,n_1)|^2 \hat{u}(t,n_1) \hat{u}(t,n_2) \hat{u}(t,n_3) \hat{u}(t,n_4) \\
&= \int_0^T dt \sum_{m \in \Z} \sum_{\substack{n_1+\ldots+n_4=0,\\(*)}} \frac{\psi_{s,a}(\overline{n}) n_1}{\Omega(\overline{n})} \gamma(2^{\alpha k_1^*} t -m) \hat{u}_1(t,n_1) \\
&\quad \quad \overline{\gamma(2^{\alpha k_1^*}t-m) \hat{u}_1(t,n_1)} \prod_{i=1}^4 \gamma(2^{\alpha k_1^*}t-m) \hat{u}_i(t,n_i)
\end{split}
\end{equation*}

This time we confine ourselves to the majority of the cases, where the smooth cutoff function does not interact with the sharp cutoff.\\
Denote 
\begin{equation*}
f_{k_{1a}}(\tau,n)=
\begin{cases}
\mathcal{F}_t[\gamma(2^{\alpha k_1^*}\cdot-m) \hat{u}_1(\cdot,n)], \quad a=1,3; \\
\mathcal{F}_t[\overline{\gamma(2^{\alpha k_1^*}\cdot-m) \hat{u}_1(\cdot,n)}], \quad a=2.
\end{cases}
\end{equation*}
and $f_{k_b}(\tau,n) = \mathcal{F}_t[\gamma(2^{\alpha k_1^*}\cdot-m) \hat{u}_j(\cdot,n)]$ for $j =2,3,4$.\\
Localization with respect to modulation is denoted by
\begin{equation*}
f_{k_i,j_i} = 
\begin{cases}
	\eta_{\leq j_i}(\tau - n^3) f_{k_i}, \quad j_i = [\alpha k_1^*]; \\
	\eta_{j_i}(\tau - n^3) f_{k_i}, \quad j_i > [\alpha k_1^*].
\end{cases}
\end{equation*}
By Lemma \ref{lem:pointwiseMultiplierBound}, the multiplier is estimated by
\begin{equation*}
\left| \frac{\psi_{s,a}(\overline{n})}{\Omega(\overline{n})} n_1 \right| \lesssim \frac{\tilde{a}(2^{k_1^*})}{2^{2k_1^*}} 2^{k_1}.
\end{equation*}
This leaves us with estimating the following expression, for which we assume the space-time Fourier transforms to be non-negative:
\begin{equation*}
\sum_{\substack{ j_{1a},j_b \\ a,b=1,2,3}} \sum_{\substack{n_1+\ldots+n_4=0,\\(*)}} \int_{\substack{ \tau_{11}+\tau_{12}+\tau_{13} \\ +\tau_2+\tau_3+\tau_4 = 0}} \prod_{i=1}^3 f_{k_{1i},j_{1i}}(\tau_{1i},n_1) \prod_{i=2}^4 f_{k_i,j_i}(\tau_i,n_i).
\end{equation*}

With the localization in modulation less relevant in the following, we do not write out the sum in the modulation variable. We apply the Cauchy-Schwarz inequality in the modulation variables and $n_1$ to find
\begin{equation}
\label{eq:RemainderReductionI}
\begin{split}
&\quad \sum_{n_1} \int_{\R} d\tau_1 \int_{\tau_1 = \tau_{11}+\tau_{12}+\tau_{13}} \prod_{i=1}^3 f_{k_{1i},j_{1i}}(\tau_{1i},n_1) \int_{\tau_1 + \ldots + \tau_4=0} \sum_{n_2,n_3, (*)} \prod_{i=2}^4 f_{k_i,j_i}(\tau_i,n_i) \\
&\lesssim \sum_{n_1} \left( \int_{\R} d\tau_1  \left| \int_{\tau_{1}=\tau_{11}+\tau_{12}+\tau_{13}} \prod_{i=1}^3 f_{k_{1i},j_{1i}} \right|^2 \right)^{1/2} \\
&\quad \quad \left( \int_{\R} d\tau_1 \left| \int_{\tau_1+\ldots+\tau_4 = 0} \sum_{\substack{n_2,n_3,\\(*)}} \prod_{i=2}^4 f_{k_i,j_i}(\tau_i,n_i) \right|^2 \right)^{1/2}
\end{split}
\end{equation}
For the first factor we find by two applications of the Cauchy-Schwarz inequality
\begin{equation*}
\left( \int d\tau_1 \left| \int_{\tau_{11}+\tau_{12}+\tau_{13}=\tau_1} \prod_{i=1}^3 f_{k_{1i},j_{1i}}(\tau_{1i},n_1) \right|^2 \right)^{1/2} \lesssim 2^{j_{11}/2} 2^{j_{12}/2} \prod_{i=1}^3 \Vert f_{k_{1i},j_{1i}} \Vert_{L^2_{\tau}} 
\end{equation*}
Next, an application of H\"older's inequality in $n_1$ yields
\begin{equation*}
\eqref{eq:RemainderReductionI} \lesssim 2^{-k_1^*/2} \prod_{i=1}^3 2^{j_i/2} \left( \sum_{n_1} \Vert f_{k_{1i},j_{1i}}(\cdot,n_1)\Vert_{L^2_{\tau}}^6 \right)^{1/3} \Vert 1_{I_{k_1}} (f_{2} * f_3 * f_4)^{\mathfrak{N}} \Vert_{L^2_{\tau} \ell^2_n} 
\end{equation*}

Applications of the embedding $\ell^2 \hookrightarrow \ell^6$, Young's inequality and Lemma \ref{lem:RefinedStrichartzEstimateL6} yield
\begin{equation*}
\lesssim 2^{-k_1^*/2} \prod_{i=1}^3 2^{j_{1i}/2} \Vert f_{k_{1i},j_{1i}} \Vert_{L^2_{\tau} \ell^2_n} \prod_{i=2}^4 2^{j_i/2} \Vert f_{k_i,j_i} \Vert_{L^2_{\tau} \ell^2_n}
\end{equation*}
Gathering all factors, we have derived the estimate
\begin{equation*}
\begin{split}
I(T,u_1,\ldots,u_4) &\lesssim T \sum_{k_1 \leq k_1^*} \frac{\tilde{a}(2^{k_1^*})}{2^{2k_1^*}} 2^{k_1} 2^{k_1^*} 2^{-k_1^*/2} \\
 &\quad \prod_{i=1}^3 \sum_{j_{1i}\geq k_1^*}  2^{j_{1i}/2} \Vert f_{k_{1i},j_{1i}} \Vert_{L^2_{\tau} \ell^2_n} \prod_{i=2}^4 \sum_{j_i \geq k_1^*} 2^{j_i/2} \Vert f_{k_i,j_i} \Vert_{L^2_{\tau} \ell^2_n},
\end{split}
\end{equation*}
and estimate \eqref{eq:localizedRemainderEstimateB} follows even for negative $s$ due to \eqref{eq:XkEstimateIII}.\\
\begin{equation*}
\begin{split}
\text{Estimate of } II(T,u_1,\ldots,u_6) &= \int_0^T dt \sum_{\substack{n_1+\ldots+n_4 = 0,\\ (*)}} \frac{\psi_{s,a}(\overline{n}) n_1}{\Omega(\overline{n})} \prod_{i=2}^4 \hat{u}(t,n_i)  \\
 &\quad \quad \sum_{\substack{n_1+n_5+n_6+n_7 = 0,\\(*)}} \prod_{i=5}^7 \hat{u}(t,n_i):
\end{split}
\end{equation*}

With the notation and the reductions from above, we will show that
\begin{equation*}
II(T,u_1,\ldots,u_6) \lesssim T^\theta \prod_{i=1}^6 2^{(s-)k_i} \Vert u_i \Vert_{F^{\alpha}_{k_i}}.
\end{equation*}
Also, we use an additional dyadic decomposition for $n_1$. We assume in the following $n_1 \in I_{k_1}$ and additionally sum over $k_1$.
We denote the decreasing arrangements of $k_2,k_3,k_4$ by $a_1^*,a_2^*,a_3^*$ and of $k_5,k_6,k_7$ by $b_1^*,b_2^*,b_3^*$ and note that $k_1 \leq a_1^*+5$ due to impossible frequency interaction. The cases $k_1^* = a_1^*$ and $k_1^* = b_1^*$ are analyzed separately.

\textbf{Case A:} $k_1^* = a_1^*$:\\
We localize time according to $k_1^*$. Lemma \ref{lem:pointwiseMultiplierBound} gives the estimate
\begin{equation*}
\left| \frac{|\psi_{s,a}(\overline{n})|}{|\Omega(\overline{n})|} n_1 \right| \lesssim \frac{\tilde{a}(2^{k_1^*})}{2^{2k_1^*}} 2^{k_1}.
\end{equation*}
Next, the above reductions are applied. Introduce an additional partition in the modulation variables (although the sum is not written out anymore) and apply the triangle inequality to arrive at the following expression: 
\begin{equation*}
\begin{split}
&\sum_{0 \leq k_1 \leq k_1^* + 5} 2^{k_1} \frac{\tilde{a}(2^{k_1^*})}{2^{2k_1^*}} \int_{\tau_2+\ldots+\tau_7=0} \sum_{\substack{n_1+\ldots+n_4=0,\\(*),n_1 \in I_{k_1}}} \prod_{i=2}^4 f_{k_i,j_i}(\tau_i,n_i) \\
&\quad \quad \sum_{\substack{n_1+n_5+n_6+n_7=0,\\(*),n_1 \in I_{k_1}}} \prod_{i=5}^7 f_{k_i,j_i}(\tau_i,n_i) \\
&\lesssim \sum_{0 \leq k_1 \leq k_1^*+5} 2^{k_1} \frac{\tilde{a}(2^{k_1^*})}{2^{2k_1^*}} \Vert 1_{I_{k_1}} (f_{k_2,j_2}*f_{k_3,j_3}*f_{k_4,j_4})^{\mathfrak{N}} \Vert_{L^2_{\tau} \ell^2_n} \\
&\quad \quad \Vert 1_{I_{k_1}} (f_{k_5,j_5} * f_{k_6,j_6} * f_{k_7,j_7})^{\mathfrak{N}} \Vert_{L^2_{\tau} \ell^2_n}
\end{split}
\end{equation*}
The refined $L_{t,x}^6$-Strichartz estimate from Lemma \ref{lem:RefinedStrichartzEstimateL6} is applied twice to find
\begin{equation*}
\begin{split}
&\lesssim \sum_{1 \leq k_1 \leq a_1^*} 2^{k_1} \frac{\tilde{a}(2^{k_1^*})}{2^{2k_1^*}} 2^{-a_1^*/2} 2^{a_3^*/2} \prod_{i=2}^4 2^{j_i/2} \Vert f_{k_i,j_i} \Vert_{L^2_{\tau} \ell^2_n} \\
&\quad \quad 2^{-b_1^*/2} 2^{b_3^*/2} \prod_{i=5}^7 2^{j_i/2} \Vert f_{k_i,j_i} \Vert_{L^2_{\tau} \ell^2_n}\\
&\lesssim \frac{\tilde{a}(2^{k_1^*})}{2^{k_1^*}} 2^{-k_1^*/2} 2^{a_3^*/2} 2^{-b_1^*/2} 2^{b_3^*/2} \prod_{i=2}^7 2^{j_i/2} \Vert f_{i,j_i} \Vert_{L^2_{\tau} \ell^2_n}.
\end{split}
\end{equation*}
Taking the localization in time into account, which gives an additional factor of $T 2^{k_1^*}$ in the majority of the cases, we find \eqref{eq:localizedRemainderEstimateB} to hold for $s>0$ after summing over $j_i$ and invoking \eqref{eq:XkEstimateIII}.

\textbf{Case B:} $k_1^* = b_1^*$:\\
We localize time only according to $a_1^*$. This gives a factor of $T 2^{a_1^*}$. Below, we denote the localized functions $\hat{u}(t,n) \gamma(2^{\alpha a_1^*}t-m)$ again by $\hat{u}(t,n)$. We estimate the remaining expression as follows:
\begin{equation}
\label{eq:IntermediateStepCorrectionTermEstimate}
\begin{split}
&\quad \sum_{0 \leq k_1 \leq a_1^*+5} \int_{\R} dt \sum_{\substack{n_1+\ldots+n_4=0,\\(*)}} \frac{\psi_{s,a}(\overline{n})}{\Omega(\overline{n})} n_1 1_{I_{k_1}}(n_1) \prod_{i=2}^4 \hat{u}_i(t,n_i) \\
&\quad \quad \sum_{\substack{n_1 + n_5+ n_6+n_7=0,\\(*)}} \prod_{i=5}^7 \hat{u}_5(t,n_i) \\
&\lesssim \sum_{0 \leq k_1 \leq a_1^*+5} 2^{k_1} \int_{\R} dt \sum_{n_1 \in I_{k_1}} | \sum_{\substack{n_1+n_5+n_6+n_7=0,\\(*)}} \prod_{i=5}^7 \hat{u}_i(t,n_i)  | \\
&\quad \quad | \sum_{n_2,n_3,(*)} \frac{\psi_{s,a}(\overline{n})}{\Omega(\overline{n})} \hat{u}_2(t,n_2) \hat{u}_3(t,n_3) \hat{u}_4(t,-n_1-n_2-n_3) | \\
&\lesssim \sum_{0 \leq k_1 \leq a_1^*+5} 2^{k_1} \int_{\R} dt ( \sum_{n_1 \in I_{k_1}} | \sum_{\substack{n_1+n_5+n_6+n_7=0,\\(*)}} \prod_{i=5}^7 \hat{u}_i(t,n_i) |^2 )^{1/2} \\
&\quad \quad ( \sum_{n_1 \in I_{k_1}} | \sum_{n_2,n_3,(*)} \frac{\psi_{s,a}(\overline{n})}{\Omega(\overline{n})} \hat{u}_2(t,n_2) \hat{u}_3(t,n_3) \hat{u}_4(t,-n_1-n_2-n_3) |^2 )^{1/2} \\
\end{split}
\end{equation}
Next, we apply H\"older's inequality in time, and for $\hat{u}_2$, $\hat{u}_3$ and $\hat{u}_4$ we already plug-in the decomposition in the modulation variable adapted to the localization in time. We start with a size of the modulation variable of $2^{a_1^*}$. Further, we assume $f_{k_i,j_i} \geq 0$. We find from applying Plancherel's theorem and the refined Strichartz estimate
\begin{equation*}
\begin{split}
&\quad \sum_{j_2,j_3,j_4 \geq a_1^*} \left\Vert \left( \sum_{n_1 \in I_{k_1}} \left| \sum_{n_2,n_3,(*)} \frac{\psi_{s,a}(\overline{n})}{\Omega(\overline{n})} \hat{u}_{2,j_2}(t,n_2) \hat{u}_{3,j_3}(t,n_3) \hat{u}_{4,j_4}(t,n_4) \right|^2 \right)^{1/2} \right\Vert_{L_t^2} \\
&\lesssim \frac{\tilde{a}(2^{a_1^*})}{2^{2a_1^*}} \sum_{j_2,j_3,j_4 \geq a_1^*} \Vert 1_{I_{k_1}} (f_{k_2,j_2} * f_{k_3,j_3} * f_{k_4,j_4})^{\mathfrak{N}} \Vert_{L^2_{\tau} \ell^2_n} \\
&\lesssim \frac{\tilde{a}(2^{a_1^*})}{2^{2a_1^*}} 2^{k_1/2} 2^{-a_1^*/2} \prod_{i=2}^4 \sum_{j_i \geq a_1^*} 2^{j_i/2} \Vert f_{k_i,j_i} \Vert_{L^2_{\tau} \ell^2_n}.
\end{split}
\end{equation*}

We note that for the other convolution term in \eqref{eq:IntermediateStepCorrectionTermEstimate} the localization in time is not high enough to finally evaluate the factors in $F^1_{k_i}$. Thus, we increase localization in time to $2^{k_1^*}$. To derive more favourable bounds, we use orthogonality in time.\\
Observe that for $f: \R \rightarrow \C$ with $\text{supp}_t(f) \subseteq I$, $I$ an interval with $|I|=2^{-a}$ and $J \subseteq I$ a family of intervals partitioning $I$ with $|J|=2^{-b}$, $a<b$:
\begin{equation*}
\Vert f(t) \Vert_{L_t^2(I)}^2 = \Vert \sum_{\substack{J \subseteq I, \\ |J|=2^{-b}}} 1_{J}(t) f(t) \Vert_{L_t^2}^2 = \sum_{J \subseteq I} \Vert 1_J(t) f(t) \Vert_{L_t^2}^2 \lesssim 2^{(b-a)/2} \sup_{J \subseteq I} \Vert f \Vert^2_{L^2_t(J)}
\end{equation*}
In the present context, due to the time localization up to $2^{-a_1^*}$, which was already given, increasing localization in time to $2^{-k_1^*}$ only amounts to a factor $2^{k_1^*/2} 2^{-a_1^*/2}$.\\
Further, we localize in modulation and suppose $f_{k_i,j_i} \geq 0$. Using Plancherel's theorem and the refined Strichartz estimate, we conclude the bound
\begin{equation*}
\begin{split}
&\quad \sum_{j_5,j_6,j_7 \geq b_1^*} \left\Vert \left( \sum_{n_1} \left| \sum_{n_5,n_6,(*)} \prod_{i=5}^7 \hat{u}_i(t,n_i)  \right|^2 \right)^{1/2} \right\Vert_{L_t^2} \\
&\lesssim \sum_{j_5,j_6,j_7 \geq b_1^*} \Vert 1_{I_{k_1}} (f_{k_5,j_5} * f_{k_6,j_6} * f_{k_7,j_7})^{\mathfrak{N}} \Vert_{L^2_{\tau} \ell^2_n} \\
&\lesssim 2^{-k_1^*/2} 2^{k_1/2} \prod_{i=5}^7 \sum_{j_i \geq b_1^*} 2^{j_i/2} \Vert f_{k_i,j_i} \Vert_{L^2_{\tau} \ell^2_n}.
\end{split}
\end{equation*}
We gather all factors to find
\begin{equation*}
\begin{split}
&\quad T \sum_{1 \leq k_1 \leq a_1^*} 2^{k_1} \frac{\tilde{a}(2^{a_1^*})}{2^{2a_1^*}} 2^{a_1^*} 2^{b_1^*/2} 2^{-a_1^*/2} 2^{-b_1^*/2} 2^{k_1/2} \prod_{i=2}^7 2^{j_i/2} \Vert f_{k_i,j_i} \Vert_{L^2_{\tau} \ell^2_n} \\
&\lesssim T \tilde{a}(2^{a_1^*}) \prod_{i=2}^7 2^{j_i/2} \Vert f_{i,j_i} \Vert_{L^2_{\tau} \ell^2_n}.
\end{split}
\end{equation*}
We find \eqref{eq:localizedRemainderEstimateB} to hold for $s>0$ and $\varepsilon(s)>0$ due to \eqref{eq:XkEstimateIII}.

The proof is complete. 
\end{proof}
\subsection{Conditional energy estimates in negative Sobolev spaces}
\label{subsection:EnergyEstimatesNegativeSobolevSpaces}
Next, we see how under the hypothesis \eqref{eq:L8Hypothesis} of an essentially sharp $L^8_{t,x}$-Strichartz estimate energy estimates in negative Sobolev spaces for functions in $F^{1+\delta}$-spaces for some $\delta > 0$ can be shown. Recall from the beginning of Section \ref{section:MultilinearEstimates} how via interpolation follows
\begin{equation}
\label{eq:L6StrichartzHypothesis}
\Vert u \Vert_{L^6_{t,x}(\R \times \T)} \lesssim \Vert u \Vert_{X^{0+,4/9+}}.
\end{equation}

In the proof of energy estimates in negative Sobolev spaces, the smoothing of \eqref{eq:L6StrichartzHypothesis} in short-time spaces will be utilized. We stress that it is estimate \eqref{eq:L6StrichartzHypothesis}, which is required to prove Proposition \ref{prop:negativeEnergyPropagationMKDV}. Theorem \ref{thm:nonExistence} is formulated conditional upon the essentially sharp $L^8_{t,x}$-Strichartz estimate \eqref{eq:L8Hypothesis} as this conjecture is more prominent. However, we suspect that \eqref{eq:L6StrichartzHypothesis} is easier to prove than \eqref{eq:L8Hypothesis}.

Further, in Proposition \ref{prop:negativeEnergyPropagationMKDV} we shall only prove a qualitative result as \eqref{eq:L6StrichartzHypothesis} is currently out of reach. The analysis of Subsection \ref{subsection:EnergyEstimatesPositiveSobolevSpaces} implies favourable bounds in negative Sobolev spaces for all terms, but $R_6^{s,a}$. This contribution is analyzed in the remainder of this subsection.
\begin{proposition}
\label{prop:negativeEnergyPropagationMKDV}
Let $T \in (0,1]$ and suppose that \eqref{eq:L6StrichartzHypothesis} is true. There is $\delta^\prime > 0$ and $\theta > 0$ so that for $0<\delta<\delta^{\prime}$ there is $s=s(\delta)<0$ such that the following estimate holds:
\begin{equation}
R_6^{s,a,M} \lesssim T^\theta \prod_{i=1}^6 \Vert u_i \Vert_{F^{s,1+\delta}(T)}
\end{equation}
\end{proposition}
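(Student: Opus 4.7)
The plan is to revisit the proof of Proposition \ref{prop:EstimateCorrectionTerm} with $\alpha = 1+\delta$ in place of $\alpha = 1$, exploiting \eqref{eq:L6StrichartzHypothesis} through a refined short-time Strichartz estimate. First I would establish that for $u_k$ with space-time Fourier support in $D_{k,\leq j}$, the conjectured bound \eqref{eq:L6StrichartzHypothesis} together with interpolation yields
\begin{equation*}
\Vert u_k \Vert_{L^6_{t,x}(\R \times \T)} \lesssim 2^{\varepsilon k} 2^{(4/9+\varepsilon)j} \Vert \mathcal{F}_{t,x} u_k \Vert_{L^2_\tau \ell^2_n}.
\end{equation*}
Applied to a function in $F^{1+\delta}_k$, where the minimal modulation is of order $2^{(1+\delta)k_1^*}$, this amounts to an effective smoothing of $2^{-\eta(\delta) k_1^*}$ per factor with $\eta(\delta) = (1+\delta)/18 - O(\varepsilon) > 0$, improving on the $X^{0,1/2}$-based bound underlying Lemma \ref{lem:RefinedStrichartzEstimateL6}.

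Next I would follow the case analysis of Proposition \ref{prop:EstimateCorrectionTerm}, performing dyadic frequency localization and time localization at scale $2^{-(1+\delta)k_1^*}$. For the contribution of $I(T,u)$ and for Case A of $II(T,u)$ (where $k_1^* = a_1^*$), the analysis of Proposition \ref{prop:EstimateCorrectionTerm} already extracts a favourable factor of $2^{-k_1^*/2}$ from the inverse resonance $\Omega(\overline{n})^{-1}$ together with the refined bound of Lemma \ref{lem:RefinedStrichartzEstimateL6}. Redoing the bookkeeping with $\alpha = 1+\delta$ costs only an additional factor $2^{\delta k_1^*}$ in the number of time intervals, and this is absorbed by the new refined Strichartz smoothing applied to a single factor. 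These contributions therefore close for $s$ in some negative range that depends on $\delta$.

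The main obstacle will be Case B of $II(T,u)$, i.e.\ the configuration $k_1^* = b_1^* \gg a_1^*$. Here the multiplier estimate from Lemma \ref{lem:pointwiseMultiplierBound} controls $|\psi_{s,a}(\overline{n}) n_1/\Omega(\overline{n})|$ only by $\tilde{a}(2^{a_1^*}) \cdot 2^{k_1}/2^{2a_1^*}$, hence carries no decay in the large frequency $b_1^*$, while time localization must nonetheless be performed at scale $2^{-(1+\delta)b_1^*}$ and produces $\sim T \cdot 2^{(1+\delta)b_1^*}$ intervals. In the analogue of the relevant computation of Proposition \ref{prop:EstimateCorrectionTerm} I would replace the application of Lemma \ref{lem:RefinedStrichartzEstimateL6} on the convolution $(f_{k_5,j_5} * f_{k_6,j_6} * f_{k_7,j_7})^{\mathfrak{N}}$ by the new refined $L^6$-estimate. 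Each of the three high-frequency factors then contributes a smoothing factor of $2^{-\eta(\delta) b_1^*}$, giving a total gain $2^{-3\eta(\delta) b_1^*}$ that is expected to more than compensate both the additional time-localization loss $2^{\delta b_1^*}$ and the slow growth $\tilde{a}(2^{b_1^*}) = 2^{2(s+\varepsilon)b_1^*}$, provided $s > -\eta(\delta) + O(\varepsilon)$.

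The final step is to sum the resulting dyadic estimates in frequency and modulation, invoke Lemma \ref{lem:sharpTimeCutoffAlmostBounded} and \eqref{eq:XkEstimateIII} to translate the $L^2_\tau \ell^2_n$-bounds into $F^{s,1+\delta}(T)$-bounds, and finally choose $\delta' > 0$ sufficiently small so that the choice $s = s(\delta) \in (-\eta(\delta), 0)$ closes everything. The delicate point throughout is that $\delta$ must be tuned small enough that the logarithmic losses from $\tilde{a}(2^{k_1^*})$ and from the refined summation over $\Omega$-dyadic blocks (as in the positive-regularity proof) remain strictly subdominant to the three-factor high-frequency smoothing.
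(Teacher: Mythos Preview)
Your plan is sound for Case A of $II(T,u)$ (and for $I(T,u)$), but it breaks down in Case B, where $k_1^* = b_1^* \gg a_1^*$. The difficulty is that your ``total gain $2^{-3\eta(\delta)b_1^*}$'' is not a gain on top of the Proposition~\ref{prop:EstimateCorrectionTerm} computation; it is a \emph{replacement} for the factor $2^{-b_1^*/2}$ that Lemma~\ref{lem:RefinedStrichartzEstimateL6} already contributes on the high triple. Concretely, Lemma~\ref{lem:RefinedStrichartzEstimateL6} on $(f_{k_5}*f_{k_6}*f_{k_7})^{\mathfrak N}$ gives $2^{-b_1^*/2}\prod 2^{j_i/2}\|f_{k_i,j_i}\|_2$, while the conjectured $L^6$ bound gives only $\prod 2^{(4/9+)j_i}\|f_{k_i,j_i}\|_2$, i.e.\ after summing $j_i\geq(1+\delta)b_1^*$ a factor $\approx 2^{-(1/6)b_1^*}$. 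You therefore lose $2^{b_1^*/3}$ relative to the baseline, and the extra smoothing cannot be stacked with the $2^{-b_1^*/2}$ frequency factor because that factor itself already consumes the $2^{j_i/2}$ weights. In particular, there is no mechanism in your outline producing negative $b_1^*$-dependence beyond what Proposition~\ref{prop:EstimateCorrectionTerm} gave at $s=0+$, so you cannot push to $s<0$. (Also, the multiplier sees only $n_1,\dots,n_4$, so the symbol bound is $\tilde a(2^{a_1^*})$, not $\tilde a(2^{b_1^*})$.)

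The paper closes Case B by a different idea you do not use: the \emph{second} resonance function $\Omega^{(2)}(n_2,\dots,n_7)=\Omega^{(1)}(n_1,n_2,n_3,n_4)-\Omega^{(1)}(n_1,n_5,n_6,n_7)$. When $K_5^2\gg K_2^3$ one has $|\Omega^{(2)}|\gtrsim K_5^2$, which forces one of the six modulations to be $\gtrsim 2k_5$; this permits two bilinear Strichartz estimates (Lemma~\ref{lem:bilinearEstimate}) pairing factors \emph{across} the two triples and yields the needed decay in $b_1^*$. In the complementary regime $K_5^2\lesssim K_2^3$ the constraint $b_1^*\leq \tfrac32 a_1^*$ caps the orthogonality loss $2^{(1+\delta)(b_1^*-a_1^*)/2}\lesssim 2^{(1+\delta)a_1^*/4}$, and then the refined $L^6$ estimate applied to four factors at the $a_1^*$-scale (together with orthogonality for the two genuinely high factors) suffices. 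Your proposal would need to incorporate this second-resonance/case-splitting mechanism to handle $b_1^*\gg a_1^*$.
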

\begin{proof}
In the proof of Proposition \ref{prop:energyPropagation} was shown that $I(T,u)$ can be estimated in negative Sobolev spaces. Thus, we only estimate $II(T,u)$ below.

As above the frequency constraint is omitted, and $R_6^{s,a,M}$ is split into dyadic blocks $R_6^{s,a,M}(K_2,\ldots,K_7)$ where $\text{supp} \; \hat{u}_i \subseteq I_{k_i}$, $K_i = 2^{k_i}$. We may assume by symmetry that $K_2 \geq K_3 \geq K_4$, $K_5 \geq K_6 \geq K_7$. Further, let $K_1^* \geq \ldots \geq K_6^*$ denote a decreasing rearrangement of $K_i, \; i=2,\ldots,7$.

\textbf{Case A:} $K_2 \gtrsim K_5$. In this case $K_1^* \sim K_2$, and we add localization in time according to $K_2$. Let $\gamma$ be a smooth function with support in $[-1,1]$ satisfying
\begin{equation*}
\sum_m \gamma^6(t-m) \equiv 1.
\end{equation*}
We have to estimate
\begin{equation*}
\begin{split}
&\sum_m \int_{\R} dt 1_{[0,T](t)} \sum_{\substack{n_1+n_2+n_3+n_4=0,\\(*)}} \frac{\psi_{s,a}(\overline{n}) n_1}{\Omega(\overline{n})} \prod_{i=2}^4 \gamma(2^{(1+\delta) k_1^*} t -m) \hat{u}_i(t,n_i) \\
&\quad \times \sum_{\substack{n_1+n_5+n_6+n_7=0,\\(*)}} \prod_{i=5}^7 \gamma(2^{(1+\delta) k_1^*} t -m) \hat{u}_i(t,n_i).
\end{split}
\end{equation*}
First, we handle the majority of cases, for which
\begin{equation*}
1_{[0,T]}(\cdot) \gamma(2^{(1+\delta) k_1^*} \cdot - m) = \gamma(2^{(1+\delta) k_1^*} \cdot - m).
\end{equation*}
Let $f_{k_i} = \mathcal{F}_{t,x}[\gamma(2^{(1+\delta)k_1^*} \cdot - m) u_i]$.\\
This is further decomposed as $f_{k_i} = \sum_{j_i \geq (1+\delta) k_1^*} f_{k_i,j_i}$.\\
By the above, we have to estimate
\begin{equation*}
\sum_{k_1 \leq k_2} \frac{2^{k_1}}{2^{2k_1^*}} \Vert f_{k_2,j_2} * f_{k_3,j_3} * f_{k_4,j_4} \Vert_{L^2_{\tau} \ell^2_n} \Vert f_{k_5,j_5} * f_{k_6,j_6} * f_{k_7,j_7} \Vert_{L^2_{\tau}, \ell^2_n},
\end{equation*}
after which it remains to sum over $j_i \geq (1+\delta) k_1^*$ and take into account time localization, which amounts to a factor $T2^{(1+\delta)k_1^*}$.\\
Above, $a \in S^s_{\varepsilon}$ for negative $s$ is crudely bounded by a constant.

\eqref{eq:L6StrichartzHypothesis} yields for one factor
\begin{equation*}
\begin{split}
\Vert f_{k_2,j_2} * f_{k_3,j_3} * f_{k_4,j_4} \Vert_{L^2_{\tau} \ell^2_n} &\lesssim \Vert \mathcal{F}^{-1}_{t,x}[f_{k_2,j_2}] \cdot \mathcal{F}^{-1}_{t,x}[f_{k_3,j_3}] \cdot \mathcal{F}^{-1}_{t,x}[f_{k_4,j_4}] \Vert_{L^2_t L^2_x} \\
&\lesssim \Vert \mathcal{F}_{t,x}^{-1} f_{k_2,j_2} \Vert_{L^6_{t,x}} \Vert \mathcal{F}_{t,x}^{-1} f_{k_3,j_3} \Vert_{L^6_{t,x}} \Vert \mathcal{F}_{t,x}^{-1} f_{k_4,j_4} \Vert_{L^6_{t,x}} \\
&\lesssim 2^{(0k_1)+} \prod_{i=2}^4 2^{(4j_i/9)+} \Vert f_{k_i,j_i} \Vert_{2} 
\end{split}
\end{equation*}
and by \eqref{eq:XkEstimateIII}, we find the contribution of the majority of the cases to be bounded by
\begin{equation*}
\lesssim T 2^{(\delta k_1^*)+} \prod_{i=2}^7 2^{-(k_i/18)+} \Vert P_{k_i} u \Vert_{F_{k_i}^{1+\delta}}
\end{equation*}
with easy summation in certain negative Sobolev spaces.

%
\textbf{Case B:} $K_5 \sim K_6 \gg K_2$.\\
\underline{Subcase BI:} $K_5^2 \gg K_2^3$. Let
\begin{equation*}
\Omega^{(1)}(n_1,\ldots,n_4) = n_1^3 + n_2^3 + n_3^3 + n_4^3 \quad n_1+\ldots+n_4=0
\end{equation*}
denote the first resonance function, and
\begin{equation*}
\Omega^{(2)}(n_1,\ldots,n_6) = \sum_{i=1}^6 n_i^3, \quad n_1 + \ldots + n_6 = 0
\end{equation*}
denote the second resonance function.

In case $K_5^2 \gg K_2^3$, we find
\begin{equation*}
|\Omega^{(1)}(n_1,n_2,n_3,n_4)| \ll |\Omega^{(1)}(n_1,n_5,n_6,n_7)|
\end{equation*} 
and consequently, the second resonance function for the collected frequencies
\begin{equation*}
\Omega^{(2)}(n_2,n_3,n_4,-n_5,-n_6,-n_7) = \Omega^{(1)}(n_1,n_2,n_3,n_4) - \Omega^{(1)}(n_1,n_5,n_6,n_7)
\end{equation*}
satisfies $|\Omega^{(2)}| \sim |\Omega^{(1)}(n_1,n_5,n_6,n_7)| \gtrsim K_5^2$.\\
Let $\gamma$ be like in Case A. We add localization in time according to $K_5^{(1+\delta)}$, which leads us to estimate
\begin{equation*}
\begin{split}
&\int_{\R} dt \sum_{\substack{n_1+n_2+n_3+n_4=0,\\(*)}} \frac{\psi_{s,a}(\overline{n})}{\Omega(\overline{n})} n_1 \prod_{i=2}^4 \gamma(2^{(1+\delta)k_1^*} t-m) \hat{u}_2(t,n_i) \\
&\quad \quad \sum_{\substack{n_1+n_5+n_6+n_7 = 0,\\(*)}} 1_{[0,T]}(t) \prod_{i=5}^7 \gamma(2^{(1+\delta)k_1^*} t-m) \hat{u}_i(t,n_i).
\end{split}
\end{equation*}
We only deal with the majority of the cases, where
\begin{equation*}
1_{[0,T]}(\cdot) \gamma(2^{(1+\delta)k_1^*} \cdot - m) = \gamma(2^{(1+\delta)k_1^*} \cdot - m).
\end{equation*}
The systematic modification for the exceptional cases is omitted.

The idea is to use two bilinear Strichartz estimates from Lemma \ref{lem:bilinearEstimate} involving $u_5$, $u_6$, $u_7$ and the function with high modulation $j_i \geq 2k_5-10$. Suppose e.g. that $j_4 \geq 2k_5-10$.\\
Up to time localization factor and summation over $j_i \geq (1+\delta) k_1^*$, we find
\begin{equation*}
\begin{split}
&\quad \sum_{k_1 \leq k_2} \frac{2^{k_1}}{2^{2k_2^*}} \int (f_{k_2,j_2} * f_{k_3,j_3} * f_{k_4,j_4})(f_{k_5,j_5} * f_{k_6,j_6} * f_{k_7,j_7}) dt \\
&\lesssim \sum_{k_1 \leq k_2} \frac{2^{k_1}}{2^{2k_2^*}} \int u_{k_2,j_2} \ldots u_{k_7,j_7} dx dt,
\end{split}
\end{equation*}
where $u_{k_i,j_i} = \mathcal{F}_{t,x}^{-1}[f_{k_i,j_i}]$.\\
Here, we ignore the (in this case) irrelevant reflection $\tilde{f}(\tau,n) = f(-\tau,-n)$.

Thus, the majority of the cases is estimated by
\begin{equation*}
\lesssim T 2^{(\delta-1/4)k_5} \frac{2^{k_3/2}}{2^{k_2/2}} \prod_{i=2}^7 2^{j_i/2} \Vert f_{k_i,j_i} \Vert_2.
\end{equation*}
Hence, summation in negative Sobolev spaces is straight-forward for $\delta < 1/4$.

\underline{Subcase BII:} $K_5^2 \lesssim K_2^3$. In case $K_5 \sim K_7$ we find $|\Omega^{(1)}(n_1,n_5,n_6,n_7)| \sim K_5^3$ and consequently, $|\Omega^{(2)}| \sim K_5^3$. The argument from Subcase BI provides a sufficient estimate. Thus, suppose in the following $K_7 \ll K_5$.

\underline{Subsubcase BIIa:} $K_3 \ll K_2$. It has to hold $K_1 \sim K_2$.\\
If $K_2 \ll K_7$, then $|\Omega^{(1)}(n_1,n_5,n_6,n_7)| \gtrsim K_5^2 K_7 \gg |\Omega^{(1)}(n_1,n_2,n_3,n_4)|$.\\
If $K_7 \ll K_2$, then $|\Omega^{(1)}(n_1,n_5,n_6,n_7)| \gtrsim K_5^2 K_2 \gg |\Omega^{(1)}(n_1,n_2,n_3,n_4)|$ because $|\Omega^{(1)}(n_1,n_2,n_3,n_4)| \lesssim K_2^2 K_3$.\\
In any case, $|\Omega^{(2)}| \gtrsim K_5^2$ and the argument from Subcase BI is sufficient.

It remains to check $K_2 \sim K_7$. We separate variables via expansion into a rapidly converging Fourier series (the required regularity of the multiplier is provided following Remark \ref{rem:RegularityMultiplierNegativeSobolevSpacesMKDV} after Lemma \ref{lem:pointwiseMultiplierBound}). For details on this argument, see \cite[Section~5]{ChristHolmerTataru2012}. This leads to the expression
\begin{equation*}
\sum_{k_1 \leq k_2} \frac{2^{k_1}}{2^{2k_2}} \int_0^T dt \int dx u_{k_2} \ldots u_{k_7}
\end{equation*}
Let $\gamma$ be like above and by H\"older's inequality
\begin{equation*}
\begin{split}
&\quad \sum_{k_1 \leq k_2} \frac{2^{k_1}}{2^{2k_2}} \sum_{|m| \lesssim T 2^{(1+\delta)k_2}} \int_{\R} dt \int dx u_{k_2} \gamma(2^{(1+\delta)k_2}t -m) \ldots \gamma(2^{(1+\delta)k_2} t -m) \\
&\quad 1_{[0,T]}(t) u_{k_5} \gamma(\ldots) u_{k_6} \gamma(\ldots) u_{k_7} \gamma(\ldots) \\
&\lesssim 2^{-k_2} \sum_{|m| \lesssim T 2^{(1+\delta)k_2}} \Vert \gamma(2^{(1+\delta)k_2} t -m) u_{k_2} \Vert_{L^6_{t,x}} \ldots \\
&\quad \Vert u_{k_5} \gamma(\ldots) u_{k_6} \gamma(\ldots) 1_{[0,T]}(\cdot) \Vert_{L^3_{t,x}} \Vert u_{k_7} \gamma(\ldots) \Vert_{L^6_{t,x}}.
\end{split}
\end{equation*}

Decompose for $i \in \{2,3,4,7\}$
\begin{equation*}
f_{k_i} = \mathcal{F}_{t,x}[\gamma(2^{(1+\delta)k_2} t -m) u_{k_i}] = \sum_{j_i \geq (1+\delta)k_2} f_{k_i,j_i}
\end{equation*}
and by \eqref{eq:L6StrichartzHypothesis}
\begin{equation*}
\Vert \mathcal{F}_{t,x}^{-1}[f_{k_i}] \Vert_{L^6_{t,x}} \lesssim \sum_{j_i \geq (1+\delta)k_2} \Vert \mathcal{F}_{t,x}^{-1}[f_{k_i,j_i}] \Vert_{L^6_{t,x}} \lesssim \sum_{j_i \geq (1+\delta) k_2} 2^{(4j_i/9)+} \Vert f_{k_i,j_i} \Vert_{L^2}
\end{equation*}
for $i \in \{2,3,4,7\}$. For these functions time is localized sufficiently.

For the high frequencies we have to add localization in time, where we exploit orthogonality in time
\begin{equation*}
\begin{split}
&\quad \Vert u_{k_5} \gamma^2(2^{(1+\delta)k_2} t -m) u_{k_6} 1_{[0,T]} \Vert_{L_{t,x}^3} \\
&\lesssim \left( \sum_n \Vert u_{k_5} \gamma(2^{(1+\delta) k_2} t -m) \tilde{\gamma}(2^{(1+\delta)k_5} t -n) \right. \\
&\quad \quad \left. u_{k_6} \gamma(2^{(1+\delta)k_2} t -m) \tilde{\gamma}(2^{(1+\delta) k_5} t -n) \Vert_{L^3_{t,x}}^3 \right)^{1/3}.
\end{split}
\end{equation*}

Consequently, it is enough to estimate
\begin{equation*}
\left( 2^{(1+\delta)k_5}/2^{(1+\delta)k_2} \right)^{1/3} \Vert u_{k_5} \tilde{\gamma}(2^{(1+\delta) k_5} t -n) \Vert_{L^6_{t,x}} \Vert u_{k_6} \tilde{\gamma}(2^{(1+\delta) k_5} t -n) \Vert_{L^6_{t,x}},
\end{equation*}
which, by $k_5 \leq (3/2) k_2$, \eqref{eq:L6StrichartzHypothesis} and the above argument of splitting the modulation is achieved by
\begin{equation*}
\lesssim 2^{(0k_2)+} 2^{(1+\delta)k_2/6} \prod_{i=5}^6 \sum_{j_i \geq (1+\delta)k_5} 2^{(4j_i/9)+} \Vert f_{k_i,j_i} \Vert_2
\end{equation*}
Gathering all factors and invoking \eqref{eq:XkEstimateIII}, we have derived the bound
\begin{equation*}
R^6_{s,a}(K_2,\ldots,K_7) \lesssim T \frac{2^{k_2}}{2^{2k_2}} 2^{(1+\delta)k_2} 2^{(1+\delta)k_2/6} \prod_{i=2}^7 2^{-(k_i/18)+} \Vert u_i \Vert_{F_{k_i}^{1+\delta}}.
\end{equation*}
Since there are four factors with frequency higher or equal to $K_2$, there is enough smoothing from \eqref{eq:L6StrichartzHypothesis} to sum the expression even for negative regularities choosing $\delta$ sufficiently small.

\underline{Subsubcase BIIb:} $K_2 \sim K_3$.\\
If $K_7 \sim K_6$, then $|\Omega^{(1)}(n_1,n_5,n_6,n_7)| \gtrsim K_5^3 \gg K_2^3$ and the argument from Subcase BI applies.\\
Similarly, if $K_2 \ll K_7 \ll K_5$, then we find
\begin{equation*}
|\Omega^{(1)}(n_1,n_5,n_6,n_7)| \sim K_5^2 K_7 \gg K_2^3 \gtrsim |\Omega^{(1)}(n_1,n_2,n_3,n_4)|.
\end{equation*}
Thus, we can suppose that $K_7 \lesssim K_2$. In this case the argument from Subsubcase BIIa applies because there are at least two frequencies comparable to $K_2$ and at most two frequencies,  namely $K_5$ and $K_6$, much higher than $K_2$.

The proof is complete. 
\end{proof}
\begin{remark}
\label{rem:ExtraSmoothing}
We observe from the proofs of Propositions \ref{prop:symmetrizedEnergyEstimate}, \ref{prop:EstimateCorrectionTerm} and \ref{prop:negativeEnergyPropagationMKDV} and Lemma \ref{lem:estimateBoundaryTerm} that there is some slack in the regularity. In fact, we can lower the regularity on the right-hand side depending on $s$ (after making $\varepsilon = \varepsilon(s)$ smaller, if necessary). This observation becomes important in the construction of the data-to-solution mapping.
\end{remark}

\subsection{Conclusion of Proposition \ref{prop:energyPropagation}}
\label{subsection:ConclusionEnergyEstimate}
To conclude the proof of the energy estimate, we derive a bound for the thresholds of the frequency localized energy. We have the following lemma on frequency localized energy thresholds. Although in \cite{KochTataru2007} this lemma was only proved in the real line case, the proof for the torus carries over almost verbatim.
\begin{lemma}{\cite[Lemma~5.5.,~p.~34]{KochTataru2007}}
\label{lem:energyThreshold}
For any $u_0 \in H^s(\mathbb{T}) $ and $\varepsilon >0$ there is a sequence $(\beta_n)_{n \in \mathbb{N}_0}$ satisfying the following conditions:
\begin{enumerate}
\item[(a)] $2^{2ns} \Vert P_n u_0 \Vert_{L^2}^2 \leq \beta_n \Vert u_0 \Vert_{H^s}^2 $,
\item[(b)] $\sum_n \beta_n \lesssim 1$,
\item[(c)] $(\beta_n)$ satisfies a log-Lipschitz condition, that is
\begin{equation*}
| \log_2 \beta_{n} - \log_2 \beta_{m} | \leq \frac{\varepsilon}{2} |n - m|.
\end{equation*}
\end{enumerate}
\end{lemma}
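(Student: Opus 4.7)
The plan is to construct $\beta_n$ by exponential smoothing of the normalized spectral densities $\alpha_n := 2^{2ns}\Vert P_n u_0 \Vert_{L^2}^2 / \Vert u_0 \Vert_{H^s}^2$. One may assume $u_0 \not\equiv 0$, else $\beta_n = 2^{-n}$ trivially works. By the Littlewood--Paley characterization of $H^s(\T)$, we have $\sum_{n \geq 0} \alpha_n \sim 1$.

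I would set
\[
\beta_n := \sum_{m \geq 0} \alpha_m \, 2^{-(\varepsilon/2)|n-m|}, \qquad n \in \mathbb{N}_0.
\]
Property (a) is immediate upon retaining only the $m = n$ term in the sum. For (b), Fubini and the geometric series yield
\[
\sum_{n \geq 0} \beta_n \;=\; \sum_{m \geq 0} \alpha_m \sum_{n \geq 0} 2^{-(\varepsilon/2)|n-m|} \;\lesssim_{\varepsilon}\; \sum_{m \geq 0} \alpha_m \;\lesssim\; 1,
\]
where the inner sum is bounded uniformly in $m$ (independently of the truncation to $n \geq 0$).

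For the log-Lipschitz property (c), it suffices by the triangle inequality to verify the bound for consecutive indices and iterate. Splitting the defining sum at $m \leq n$ versus $m \geq n+1$, I decompose $\beta_n = A_n + B_n$ and compute $\beta_{n+1} = 2^{-\varepsilon/2} A_n + 2^{\varepsilon/2} B_n$, where
\[
A_n := \sum_{m \leq n} \alpha_m 2^{-(\varepsilon/2)(n-m)}, \qquad B_n := \sum_{m \geq n+1} \alpha_m 2^{-(\varepsilon/2)(m-n)}.
\]
Since $A_n, B_n \geq 0$, the ratio $\beta_{n+1}/\beta_n$ is a weighted average of $2^{-\varepsilon/2}$ and $2^{\varepsilon/2}$, hence lies in $[2^{-\varepsilon/2}, 2^{\varepsilon/2}]$. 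Taking logarithms gives $|\log_2 \beta_{n+1} - \log_2 \beta_n| \leq \varepsilon/2$, and iteration yields (c).

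The argument is essentially computational with no serious obstacle; the only design choice of substance is to pick the exponentially decaying kernel with rate exactly $\varepsilon/2$, which is forced by the target log-Lipschitz constant and simultaneously preserves summability by virtue of $\sum_m 2^{-(\varepsilon/2)|\cdot - m|} \lesssim_{\varepsilon} 1$.
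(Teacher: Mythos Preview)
Your proof is correct. The paper does not give its own argument here but merely cites \cite{KochTataru2007}, and your exponential-smoothing construction $\beta_n = \sum_m \alpha_m 2^{-(\varepsilon/2)|n-m|}$ is precisely the standard one used there; the verification of (a)--(c) is clean, and the convex-combination observation for (c) is exactly right.
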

To finish the proof of Proposition \ref{prop:energyPropagation}, choose an envelope sequence for initial data from the above lemma and define associated symbols. For details we refer to \cite{KochTataru2007}.

\section{Extension to related models}
\label{section:ExtensionFurtherModels}
Here, we illustrate how the analysis extends to related models. We remark that the energy method yields local solutions in $H^s$ for $s>3/2$ for \eqref{eq:mKdVmKdVSystem}, and for \eqref{eq:KdVmKdVEquation}, the analysis from \cite{Bourgain1993FourierTransformRestrictionPhenomenaII} applies and yields local well-posedness for $s \geq 1/2$.
\subsection{KdV-mKdV-equation}
With the function spaces remaining the same, we prove the following set of estimates for solutions to \eqref{eq:KdVmKdVEquation} provided that $s>0$ for some $\theta>0$ and $\delta > 0$:
\begin{equation*}
\left\{\begin{array}{cl}
\Vert u \Vert_{F^s(T)} &\lesssim \Vert u \Vert_{E^s(T)} + \Vert u \partial_x u \Vert_{N^s(T)} + \Vert u^2 \partial_x u \Vert_{N^s(T)} \\
\Vert u \partial_x u \Vert_{N^s(T)} &\lesssim T^\theta \Vert u \Vert_{F^s(T)}^2 \\
\Vert \mathfrak{N}(u) \Vert_{N^s(T)} &\lesssim T^\theta \Vert u \Vert_{F^s(T)}^3 \\
\Vert u \Vert_{E^s(T)}^2 &\lesssim \Vert u_0 \Vert_{H^s}^2 + T^{\theta} M^{c(s)} (\Vert u \Vert_{F^{s-\delta}(T)}^3 + \Vert u \Vert_{F^{s-\delta}(T)}^4) \\
&\quad + M^{-d(s)} ( \Vert u \Vert_{F^{s-\delta}(T)}^3 + \Vert u \Vert_{F^{s-\delta}(T)}^4)\\
&\quad + T^\theta (\Vert u \Vert^4_{F^{s-\delta}(T)} + \Vert u \Vert^5_{F^{s-\delta}(T)} + \Vert u \Vert_{F^{s-\delta}(T)}^6)
\end{array} \right.
\end{equation*}
Above and for the remainder of this subsection, the time localization $T=T(N)=N^{-1}$, i.e., $\alpha=1$, is suppressed in the notation.

The linear estimate follows again from properties of the function spaces, the second nonlinear estimate has been proved in Section \ref{section:shorttimeTrilinearEstimates}. We have to prove the first nonlinear estimate and the extended energy estimate. Below, we assume that $\int_{\T} u dx = 0$. Since the mean is a conserved quantity of the flow, there is no loss of generality. After establishing the above set of estimates, the proof of a priori estimates and existence of solutions for \eqref{eq:KdVmKdVEquation} as stated in Theorem \ref{thm:localExistenceExtension} follows along the lines of Section \ref{section:Proof}. The details are omitted to avoid repitition.

For the nonlinear estimate we have the following lemma.
\begin{lemma}
Let $s>-1/4$. Then, the following estimate holds:
\begin{equation*}
\Vert u \partial_x u \Vert_{N^s(T)} \lesssim T^\theta \Vert u \Vert^2_{F^s(T)}.
\end{equation*}
\end{lemma}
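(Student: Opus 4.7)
The plan is to write $u\partial_x u = \tfrac12 \partial_x(u^2)$, apply a Littlewood--Paley decomposition $u = \sum_{k \geq 0} P_k u$, and reduce the claim to a frequency-localized bilinear estimate of the form
\begin{equation*}
\| P_{k_3} \partial_x (u_{k_1} u_{k_2}) \|_{N^1_{k_3}} \lesssim D(k_1,k_2,k_3) \, \| u_{k_1} \|_{F^1_{k_1}} \| u_{k_2} \|_{F^1_{k_2}},
\end{equation*}
where $D$ must decay fast enough in $k^\ast=\max(k_1,k_2,k_3)$ that the weighted $2^{s(k_3-k_1-k_2)}$-sum converges for $s>-1/4$. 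The factor $T^\theta$ will come at the end from Lemma \ref{lem:tradingModulationRegularity} by allowing one function to sit in a slightly smaller modulation exponent, just as in the proof of the trilinear estimate.

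The core computation follows the template of Section \ref{section:shorttimeTrilinearEstimates}: with $n_1+n_2=n_3$ (and none of the $n_i$ equal to zero by the mean-zero reduction) the resonance function is
\begin{equation*}
\Omega(n_1,n_2,n_3) = n_3^3 - n_1^3 - n_2^3 = 3 n_1 n_2 n_3,
\end{equation*}
so $\max_i j_i \gtrsim \log_2|\Omega|$ after time localization to the scale $2^{-k^\ast}$ (using the cutoff $\gamma(2^{k^\ast}(t-t_{k_3}) - m)$ and the definition of $F^1_k$, $N^1_k$ as in Lemma \ref{lem:ShorttimeHighLowLowHighInteraction}). After modulation decomposition, I will estimate the resulting convolution with Molinet's bilinear estimate (Lemma \ref{lem:bilinearEstimate}) applied to a well-chosen pair from $\{u_1,u_2,u_3\}$ (where $u_3$ denotes the dual variable on the output side).

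I will then split into three frequency regimes:
\begin{enumerate}
\item[(HHH)] $k_1\sim k_2\sim k_3=:k$. Here $|\Omega|\sim 2^{3k}$, so $j^\ast \gtrsim 3k$, and a single application of Lemma \ref{lem:bilinearEstimate} to $u_i u_3$ absorbs the derivative $2^{k_3}$ with decay $2^{-k/2}$ in the end; this is summable for $s>-1/4$ (in fact for $s>-1/2$).
\item[(HLH)] $|k_1-k_3|\leq 5$, $k_2\leq k_1 -10$. Then $|\Omega|\sim 2^{2k_1+k_2}$ and the bilinear estimate applied to the low--high pair yields a gain $2^{-k_1/2}$ that dominates the derivative $2^{k_3}\sim 2^{k_1}$ via the resonance-enforced modulation threshold.
\item[(HHL)] $|k_1-k_2|\leq 5$, $k_3\leq k_1-10$. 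This is the main obstacle. The output frequency $2^{k_3}$ is much smaller than the time-localization scale $2^{k^\ast}=2^{k_1}$ dictated by the $F^1_{k_1}$ norm of the inputs, so I first subdivide $[-T,T]$ into $\mathcal{O}(T \cdot 2^{k_1-k_3})$ intervals of length $2^{-k_1}$, exactly as in Lemma \ref{lem:ShorttimeHighHighLowLowInteraction}. Inside one such interval the bilinear estimate applied to $u_1 u_2$ gives output mass at frequency $\sim 2^{k_3}$ with bound $2^{j_1/2}(2^{(j_2-k_3)/4}+1)$. Combined with the resonance lower bound $j^\ast \gtrsim 2k_1+k_3$ and the derivative $2^{k_3}$, one obtains a frequency-localized bound with decay $2^{-k_1/2}$ on the output $N^1_{k_3}$ side while losing $2^{k_1-k_3}$ from the time partition; the net balance produces $D(k_1,k_2,k_3)\sim 2^{k_3/4}2^{-k_1/2}$ (times logarithmic losses), and this summable against $2^{s(k_3-2k_1)}$ \emph{precisely} when $s>-1/4$.
\end{enumerate}

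The main obstacle, as anticipated, is the HHL case: balancing the cost of the extra time intervals against the bilinear gain is what pins down the threshold $s>-1/4$; the other cases are significantly more forgiving. Once the three case-by-case $D(k_1,k_2,k_3)$ bounds are collected, I will sum dyadically to close the estimate, and recover the $T^\theta$ factor by running one modulation exponent slightly below $1/2$ via Lemma \ref{lem:tradingModulationRegularity}, as in the trilinear estimates of Section \ref{section:shorttimeTrilinearEstimates}.
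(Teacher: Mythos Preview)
Your plan is correct and follows essentially the same route as the paper. The paper also reduces to a dyadic bilinear block estimate, splits according to whether the output frequency is high (its Case~A, which covers your HHH and HLH together) or low (its Case~B, your HHL), invokes the resonance identity $\Omega=3n_1n_2n_3$, adds the extra time partition in the High$\times$High$\to$Low case, and recovers $T^\theta$ at the end by lowering one modulation exponent. The only notable difference is that in the high-output case the paper uses two $L^4_{t,x}$-Strichartz estimates (Lemma~\ref{lem:linearStrichartzEstimates}) rather than Lemma~\ref{lem:bilinearEstimate}; this yields $D\sim 2^{-k_2/2}2^{-k_3/3}$ and hence the sharper threshold $s>-5/6$ for that interaction, but your bilinear route works as well and still leaves HHL as the case that pins down $s>-1/4$.
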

\begin{proof}
By the reductions and notation from Section \ref{section:MultilinearEstimates}, we have to estimate
\begin{equation}
\label{eq:ReductionShorttimeKdVInteraction}
2^{k_3} \sum_{j_3 \geq k_3} 2^{-j_3/2} \Vert 1_{D_{k_3,\leq j_3}} (f_{k_1,j_1} * f_{k_2,j_2}) \Vert_{L^2}
\end{equation}
for different constellations of $k_i,j_i$.\\
\underline{Case A:} Suppose that $|k_3-k_1| \leq 5$, $k_2 \leq k_1 +10$. By function space properties we can suppose that $j_i \geq k_3$. The resonance for a bilinear interaction is given by
\begin{equation*}
\Omega(n_1,n_2,n_3) = n_1^3 + n_2^3 + n_3^3 = - 3 n_1 n_2 n_3, \quad n_1 + n_2 + n_3 = 0.
\end{equation*}
Hence, there is $j_i \geq 2k_3 + k_2 - 5$.

Suppose that $j_3 \geq 2k_3+k_2-5$. Then, we use two $L^4_{t,x}$-Strichartz estimates on $f_{k_2,j_2}$ to find
\begin{equation*}
\begin{split}
\eqref{eq:ReductionShorttimeKdVInteraction} &\lesssim 2^{k_3} \sum_{j_3 \geq 2k_3+k_2} 2^{-j_3/2} \prod_{i=1}^2 2^{j_i/3} \Vert f_{k_i,j_i} \Vert_{L^2} \\
&\lesssim 2^{-k_2/2} 2^{-k_3/3} \prod_{i=1}^2 2^{j_i/2} \Vert f_{k_i,j_i} \Vert_{L^2}.
\end{split}
\end{equation*}
The argument also applies in case $j_1 \geq 2k_3 + k_2 -5$ or $j_2 \geq 2k_3 + k_2 - 5$ by virtue of duality. This proves the claim for the considered interaction provided that $s>-5/6$ due to the slack in the modulation variable in the above display.

\underline{Case B:} Suppose that $k_3 \leq k_1 - 5$. After introducing additional time localization, \eqref{eq:ReductionShorttimeKdVInteraction} is dominated by
\begin{equation*}
2^{k_1} \sum_{j_3 \geq k_3} 2^{-j_3/2} \Vert 1_{D_{k_3,\leq j_3}} (f_{k_1,j_1} * f_{k_2,j_2}) \Vert_{L^2},
\end{equation*}
where $j_i \geq k_1$ for $i=1,2$.\\
First, suppose that $j_3 \geq 2k_1+k_3-5$. Then, we find by applying the bilinear estimate
\begin{equation*}
2^{k_1} \sum_{j_3 \geq 2k_1 + k_3 - 5} 2^{-j_3/2} \Vert 1_{D_{k_3,\leq j_3}}(f_{k_1,j_1} * f_{k_2,j_2}) \Vert_{L^2} \lesssim 2^{-k_3/2} 2^{-k_1/2} \prod_{i=1}^2 2^{j_i/2} \Vert f_{k_i,j_i} \Vert_2.
\end{equation*}
For smaller $j_3$ we have to have $j_i \geq 2k_1 + k_3 - 5$ for $i=1$ or $i=2$. And by duality and an application of the bilinear estimate, we find
\begin{equation*}
\begin{split}
&\quad \quad 2^{k_1} \sum_{k_3 \leq j_3 \leq 2k_1 + k_3} 2^{-j_3/2} \Vert 1_{D_{k_3,\leq j_3}} (f_{k_1,j_1} * f_{k_2,j_2}) \Vert_{L^2}\\
&\lesssim (2k_1) 2^{k_1} 2^{-k_1-k_3/2} 2^{-k_1/2} \prod_{i=1}^2 \Vert f_{k_i,j_i} \Vert_2.
\end{split}
\end{equation*}
These are the key estimates to prove the claim via the general arguments given in detail in Section \ref{section:shorttimeTrilinearEstimates}. 
\end{proof}
The energy estimate will be more involved. We omit the technical considerations not integrating by parts the whole expression and focus on the key estimates.
\begin{lemma}
Let $T \in (0,1]$, $s > 0$ and $a \in S^s_\varepsilon$. Then, there is $\varepsilon(s)>0$ and $\theta(s)>0$ so that for a smooth solution $u$ to \eqref{eq:KdVmKdVEquation} the following estimate holds:
\begin{equation*}
\begin{split}
\Vert u(T) \Vert_{H^a}^2 &\lesssim \Vert u_0 \Vert_{H^a}^2 + T^{\theta} M^{c(s)} (\Vert u \Vert_{F^{s}(T)}^3 + \Vert u \Vert_{F^s(T)}^4) \\
&\quad + M^{-d(s)} ( \Vert u \Vert_{F^{s}(T)}^3 + \Vert u \Vert_{F^s(T)}^4) \\
&\quad + T^\theta (\Vert u \Vert^4_{F^s(T)} + \Vert u \Vert^5_{F^s(T)} + \Vert u \Vert_{F^{s}(T)}^6)
\end{split}
\end{equation*}
\end{lemma}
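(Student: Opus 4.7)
The plan is to adapt the modified-energy strategy of Propositions \ref{prop:symmetrizedEnergyEstimate} and \ref{prop:EstimateCorrectionTerm} to the extra cubic nonlinearity of \eqref{eq:KdVmKdVEquation}. Computing $\partial_t \Vert u(t) \Vert_{H^a}^2$ in Fourier and symmetrizing yields, upon integration in $t$,
\begin{equation*}
\Vert u(T) \Vert_{H^a}^2 = \Vert u_0 \Vert_{H^a}^2 + C\, R_3^{s,a}(T,u) + C\, R_4^{s,a}(T,u),
\end{equation*}
where $R_4^{s,a}$ is the quartic expression already analyzed in Section \ref{section:EnergyEstimates}, and $R_3^{s,a}$ carries the symmetrized cubic multiplier $\psi_{s,a}^{(3)}(\overline{n}) = n_1 a(n_1) + n_2 a(n_2) + n_3 a(n_3)$ on $n_1 + n_2 + n_3 = 0$, arising from $u\partial_x u$. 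Mean-zero (which is a conserved quantity) removes the resonant set $n_1 n_2 n_3 = 0$. The treatment of $R_4^{s,a}$ follows Propositions \ref{prop:symmetrizedEnergyEstimate} and \ref{prop:EstimateCorrectionTerm} and Lemma \ref{lem:estimateBoundaryTerm} essentially verbatim, with the caveat that when one substitutes $\partial_t \hat v$ in the integration-by-parts step, the new piece $u\partial_x u$ contributes an \emph{additional} quintilinear correction $R_5^{(2)}$ alongside the familiar hex-linear $R_6^{s,a,M}$. I would bound $R_5^{(2)}$ in the spirit of Proposition \ref{prop:EstimateCorrectionTerm}, using the multiplier estimate $|\psi_{s,a}^{(4)}|/|\Omega_4| \lesssim \tilde a(2^{k_1^*})/2^{2k_1^*}$ from Lemma \ref{lem:pointwiseMultiplierBound}, the refined $L^6_{t,x}$-Strichartz estimate of Lemma \ref{lem:RefinedStrichartzEstimateL6} applied to three of the factors, and a bilinear estimate on the remaining pair.

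For $R_3^{s,a}$, the first task is to prove, by the same case-by-case analysis as Lemma \ref{lem:pointwiseMultiplierBound} (mean value theorem in the high-high-low case, double mean value theorem when all three frequencies are comparable, together with symbol regularity and $n_1+n_2+n_3 = 0$), the pointwise bound
\begin{equation*}
|\psi_{s,a}^{(3)}(\overline{n})| \lesssim \frac{\tilde a(2^{k_1^*})}{2^{2k_1^*}} |\Omega_3(\overline{n})|, \qquad \Omega_3(\overline{n}) = 3\, n_1 n_2 n_3.
\end{equation*}
Next, I split $R_3^{s,a} = R_3^{s,a,M} + (R_3^{s,a} - R_3^{s,a,M})$, where the first summand retains frequencies $\leq M$. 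I would estimate $R_3^{s,a,M}$ by dyadic decomposition in the $n_i$, frequency-dependent time localization at scale $2^{-k_1^*}$, and an additional dyadic decomposition in $|\Omega_3|$; on each piece the pointwise bound combined with the bilinear estimate of Lemma \ref{lem:bilinearEstimate} on the two factors of highest frequency and the modulation gain $j_1^* \gtrsim \log_2|\Omega_3|$ extracted from the third produces, after summation, the claimed contribution $T^\theta M^{c(s)} \Vert u \Vert_{F^s(T)}^3$ (compare Lemma \ref{lem:ShorttimeHighLowLowHighInteraction}).

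The high-frequency complement $R_3^{s,a} - R_3^{s,a,M}$ is handled by modified-energy differentiation by parts. Passing to the interaction picture $\hat v(t,n) = e^{it n^3} \hat u(t,n)$ makes the oscillation $e^{it\Omega_3}$ explicit, and trading one time integration for $1/(i\Omega_3)$ produces a boundary term $B_3^{s,a,M}$ together with two correction terms coming from the two pieces of $\partial_t \hat v$ dictated by \eqref{eq:KdVmKdVEquation}: a quadrilinear $R_4^{(1)}$ from $u\partial_x u$ and a quintilinear $R_5^{(1)}$ from $u^2\partial_x u$. The boundary term $B_3^{s,a,M}$ is estimated as in Lemma \ref{lem:estimateBoundaryTerm}, via Bernstein's inequality, the pointwise symbol bound, and an $\ell^2\ell^\infty$ Sobolev embedding on the low-frequency factors, giving $M^{-d(s)} \Vert u \Vert_{F^s(T)}^3$. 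The correction multiplier satisfies the pointwise estimate $|\psi_{s,a}^{(3)} n_1 / \Omega_3| \lesssim \tilde a(2^{k_1^*}) / 2^{k_1^*}$, and combined with refined $L^6_{t,x}$-Strichartz and time localization at scale $2^{-k_1^*}$ it delivers $T^\theta \Vert u \Vert_{F^s(T)}^4$ for $R_4^{(1)}$ and $T^\theta \Vert u \Vert_{F^s(T)}^5$ for $R_5^{(1)}$. Gathering all six contributions yields the claimed inequality.

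The hard part will be the short-time cubic estimate for $R_3^{s,a,M}$. Even with the clean pointwise symbol bound, the 3-wave resonance $|\Omega_3| = 3|n_1 n_2 n_3|$ can be much smaller than $(2^{k_1^*})^3$ in the high-high-low scenario, so a naive application of an $L^4$- or $L^6$-Strichartz estimate loses too many derivatives; closing the estimate requires carefully tracking the dyadic size of $|\Omega_3|$ and splitting the modulation sum above versus below the resonance threshold $j_1^* \sim 2k_1^* + \ell$, in the same spirit as the proof of Lemma \ref{lem:ShorttimeHighLowLowHighInteraction}. The secondary obstacle is the quintilinear $R_5^{(1)}$, whose multiplier retains one full derivative; this derivative must be absorbed by the $L^6_{t,x}$-smoothing of one of the five factors, and is the term that ultimately dictates the positivity of the regularity threshold $s > 0$.
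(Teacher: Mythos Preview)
Your plan matches the paper's approach closely: write the energy increment as $R_3^{s,a}+R_4^{s,a}$, perform the $M$-splitting on each, integrate by parts the high-frequency parts, and estimate the resulting boundary terms $B_3^{s,a,M}$, $B_4^{s,a,M}$ and correction terms $R_4^{(1)}$, $R_5^{(1)}$, $R_5^{(2)}$, $R_6^{s,a,M}$. The paper handles your $R_4^{(1)}$ and $R_5^{(1)}$ (called $II_1$ and $II_2$ there) via two applications of the bilinear estimate of Lemma~\ref{lem:bilinearEstimate} together with Bernstein's inequality rather than the refined $L^6$-Strichartz, obtaining $II_1,II_2\lesssim T\Vert u\Vert_{F^0(T)}^{4,5}$ already at regularity zero.

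Two diagnostic corrections. First, $R_3^{s,a,M}$ is not the hard part: precisely because all its frequencies are $\leq M$, any derivative loss there is absorbed into $M^{c(s)}$, which is the whole purpose of the $M$-splitting. Second, $R_5^{(1)}$ does not dictate $s>0$; the paper bounds it at $s=0$. The bottleneck forcing $s>0$ remains the hexalinear $R_6^{s,a,M}$ from the mKdV part (Proposition~\ref{prop:EstimateCorrectionTerm}, Case~B), which you correctly defer to Section~\ref{section:EnergyEstimates}. Also, for the cubic symbol $\psi_{s,a}^{(3)}$ the double mean value theorem is never needed: when all three frequencies are comparable, $|\Omega_3|\sim 2^{3k_1^*}$ and the trivial bound $|\psi_{s,a}^{(3)}|\lesssim \tilde a(2^{k_1^*})2^{k_1^*}$ suffices; only the ordinary mean value theorem is required in the high-high-low case.
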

\begin{proof}
Invoking the fundamental theorem of calculus, we find for the evolution of the $H^a$-norm
\begin{equation*}
\begin{split}
\Vert u(T) \Vert_{H^a}^2 &\lesssim \Vert u_0 \Vert_{H^a}^2 + R^{s,a}_4(T,u) \\
&\quad + | \int_0^T dt \sum_{\substack{n_1+n_2+n_3=0,\\n_i \neq 0}} (\sum_{i=1}^3 a(n_i) n_i ) \hat{u}(t,n_1) \hat{u}(t,n_2) \hat{u}(t,n_3) |.
\end{split}
\end{equation*}

The contribution of the second line will be denoted by $II$. An integration by parts gives
\begin{equation*}
II \lesssim |B^{s,a}_3| + |II_a| + |II_b|,
\end{equation*}
where
\begin{equation*}
\begin{split}
B^{s,a}_3(T,u) &= \left[ \sum_{\substack{n_1+n_2+n_3=0,\\ n_i \neq 0}} \frac{a(n_1) n_1 + a(n_2) n_2 + a(n_3) n_3}{n_1 n_2 n_3} \hat{u}(t,n_1) \hat{u}(t,n_2) \hat{u}(t,n_3) \right]_{0}^T, \\
II_1(T,u) &= \int_0^T dt \sum_{\substack{n_1+n_2+n_3=0,\\ n_i \neq 0}} ( \frac{a(n_1) n_1 + a(n_2) n_2 + a(n_3) n_3}{n_1 n_2 n_3} ) \hat{u}(t,n_1) \hat{u}(t,n_2) \\
&\quad \quad n_3 \sum_{\substack{n_3 = n_4 + n_5,\\ n_4,n_5 \neq 0}} \hat{u}(t,n_4) \hat{u}(t,n_5), \\
II_2(T,u) &= \int_0^T dt \sum_{\substack{n_1+n_2+n_3=0,\\ n_i \neq 0}} ( \frac{a(n_1) n_1 + a(n_2) n_2 + a(n_3) n_3}{n_1 n_2 n_3} \hat{u}(t,n_1) \hat{u}(t,n_2) ) \\
&\quad \quad n_3 ( |\hat{u}(t,n_3)|^2 \hat{u}(t,n_3) + \sum_{\substack{n_3 = n_4 + n_5 + n_6, \\ (*)}} \hat{u}(t,n_4) \hat{u}(t,n_5) \hat{u}(t,n_6) ).
\end{split}
\end{equation*}

Estimate of $B^{s,a}_3$: We find for $|n_i| \sim K_i$, $K_2 \lesssim K_1$ and $t \in \{ 0,T \}$ by an application of Cauchy-Schwarz inequality
\begin{equation*}
\sum_{\substack{n_1+n_2+n_3=0,\\ n_i \neq 0}} \frac{a(n_1)}{n_3^2} \prod_{i=1}^3 \hat{u}(t,n_i) \lesssim \frac{a(K_1)}{K_1^2} K_2^{1/2} \prod_{i=1}^3 \Vert P_{k_i} u \Vert_{F_{k_i}}.
\end{equation*}
This is enough to estimate the boundary term.

Estimate of $II_1$: Without loss of generality suppose that $K_1 \sim K_3 \gtrsim K_2$. It is straight-forward to verify that
\begin{equation*}
\left| \frac{a(n_1)n_1 + a(n_2) n_2 + a(n_3) n_3}{n_1 n_2 (n_1+n_2)} \right| \lesssim \frac{a(N_3)}{N_3^2}.
\end{equation*}
The terms stemming from the deviation of the mKdV-evolution are estimated via bilinear estimates and Bernstein's inequality.

\underline{Case A:} $N_1 \sim N_4 \gtrsim N_5,\; N_2$. Taking into account the symbol size, the derivative from differentiation by parts and the localization in time, two applications of estimate \eqref{eq:BilinearEstimate} on the products $(\hat{u}(n_1) \hat{u}(n_5))$, $(\hat{u}(n_2) \hat{u}(n_4))$ give for dyadic blocks, where $|n_i| \sim K_i$ and
\begin{equation*}
II_1(N_1,N_2,N_4,N_5) \lesssim T \frac{a(N_1)}{N_1} \prod_{i=1}^4 \Vert P_{k_i} u \Vert_{F_{k_i}}.
\end{equation*}
\underline{Case B:} $N_4 \sim N_5 \gg N_1 \gtrsim N_2$. Here, the time localization gives for a dyadic block together with two bilinear estimates on the products $(\hat{u}(n_4) \hat{u}(n_1))$, $(\hat{u}(n_5) \hat{u}(n_2))$ 
\begin{equation*}
II_1(K_1,K_2,K_4,K_5) \lesssim T \frac{a(K_1)}{K_1} \prod_{i=1}^4 \Vert P_{k_i} u \Vert_{F_{k_i}}.
\end{equation*}
In both cases we find after summing over $N_i$ and taking into account the relations between the frequencies
\begin{equation*}
II_1 \lesssim T \Vert u \Vert^4_{F^0(T)}.
\end{equation*}
Estimate of $II_2$: For this estimate we can suppose $K_4 \lesssim K_5 \lesssim K_6$ by symmetry.

\underline{Case A:} $K_1 \sim K_6 \gtrsim K_2, K_5 \gtrsim K_4$. Suppose $K_2 = \min(K_i)$. With two bilinear estimates on $(\hat{u}(n_1) \hat{u}(n_4))$, $(\hat{u}(n_5) \hat{u}(n_6))$ and an application of Bernstein's inequality on $\hat{u}(n_2)$, we find for a dyadic block
\begin{equation*}
II_2(K_1,K_2,K_4,K_5,K_6) \lesssim \frac{K_2^{1/2} a(K_1)}{K_1} \prod_{\substack{i=1,\\i \neq 3}}^6 \Vert P_{k_i} u \Vert_{F_{k_i}}.
\end{equation*}

\underline{Case B:} $K_6 \sim K_5 \gg K_1$. Though the time localization is different, the above argument still yields
\begin{equation*}
II_2(K_1,K_2,K_4,K_5,K_6) \lesssim \frac{a(K_1) K_2^{1/2}}{K_1} \prod_{\substack{i=1,\\ i \neq 3}}^6 \Vert P_{k_i} u \Vert_{F_{k_i}}.
\end{equation*}
Summation over dyadic blocks yields
\begin{equation*}
II_2 \lesssim T \Vert u \Vert^5_{F^0(T)}.
\end{equation*}

Considering KdV-mKdV-evolution, the differentiation by parts of $R_4$ gives in addition to the mKdV-boundary- and mKdV-remainder-term rise to the following expression
\begin{equation*}
\begin{split}
&\int_0^T \sum_{\substack{n_1+n_2+n_3+n_4 = 0,\\ (*)}} \frac{a(n_1) n_1 + a(n_2) n_2 + a(n_3) n_3 + a(n_4)n_4}{3(n_1+n_2)(n_1+n_3)(n_2+n_3)} \hat{u}(t,n_1) \hat{u}(t,n_2) \hat{u}(t,n_3) \\
&\quad \quad ( n_4 \sum_{\substack{n_4 = n_5+n_6, \\ n_i \neq 0}} \hat{u}(t,n_5) \hat{u}(t,n_6) ) dt.
\end{split} 
\end{equation*}

To estimate the above display, we recall that the symbol size is estimated by $\frac{\tilde{a}(2^{n_1})}{2^{2n_1}}$ (cf. Lemma \ref{lem:pointwiseMultiplierBound}). By symmetry we may assume $K_5 \lesssim K_6$.

\underline{Case A:} $K_6 \sim K_1$. Taking the time localization into account and applying two bilinear Strichartz estimates on $(\hat{u}(n_1) \hat{u}(n_2))$, $(\hat{u}(n_5) \hat{u}(n_6))$ and Bernstein's inequality, we find
\begin{equation*}
III_1(K_1,K_2,K_3,K_4,K_6) \lesssim \frac{a(K_1)}{K_1} K_3^{1/2} \prod_{\substack{i=1,\\ i \neq 5}}^6 \Vert P_{k_i} u \Vert_{F_{k_i}}.
\end{equation*}
\underline{Case B:} $K_5 \sim K_6 \gg K_4 \sim K_1 \gtrsim K_2 \gtrsim K_3$: Here, the time localization is different, but the argument is the same to prove
\begin{equation*}
III_1(K_1,K_2,K_3,K_5,K_6) \lesssim \frac{a(K_1) K_3^{1/2}}{K_1} \prod_{\substack{i=1,\\ i \neq 4}}^6 \Vert P_{k_i} u \Vert_{F_{k_i}}.
\end{equation*}
In both cases the summation to find $\lesssim T \Vert u \Vert_{F^0(T)}^5$ is easy. 
\end{proof}
\subsection{mKdV-mKdV-system}
To employ the mKdV-argument to prove existence of solutions and a priori estimates for $s>0$ for small initial data, the following set of estimates (up to the complication from the boundary terms in the energy estimates) is proved for the evolution of $u$:
\begin{equation*}
\left\{\begin{array}{cl}
\Vert u \Vert_{F^s(T)} &\lesssim \Vert u \Vert_{E^s(T)} + \Vert \partial_x (u v^2) \Vert_{N^s(T)} \\
\Vert \partial_x (u v^2) \Vert_{N^s(T)} &\lesssim \Vert u \Vert_{F^s(T)} \Vert v \Vert^2_{F^s(T)} \\
\Vert u \Vert_{E^s(T)}^2  &\lesssim \Vert u_0 \Vert_{H^s}^2 + \Vert v_0 \Vert_{H^s}^2 + T^\theta \Vert u \Vert_{F^{s-\delta}(T)}^2 \Vert v \Vert_{F^{s-\delta}(T)}^2 \\
&\quad + T^\theta \Vert u \Vert^2_{F^{s-\delta}(T)} \Vert v \Vert_{F^{s-\delta}(T)}^2 ( \Vert u \Vert^2_{F^{s-\delta}(T)} + \Vert v \Vert_{F^{s-\delta}(T)}^2)
\end{array}\right.
\end{equation*}
The same set of estimates holds for $v$ mutatis mutandis.

Whereat for the mKdV-equation, trivial resonances could be removed through renormalization, this is not easily possible for \eqref{eq:mKdVmKdVSystem}.\\
However, the trivial resonances $(\partial_x u)(\int uv)$, $(\partial_x v) (\int uv)$ are estimated via the time localization.
\begin{lemma}
\label{lem:TrivialResonances}
Let $s>0$. Then, we find the following estimate to hold:
\begin{equation*}
\Vert (\partial_x u) (\int uv) \Vert_{N^s(T)} \lesssim \Vert u \Vert^2_{F^s(T)} \Vert v \Vert_{F^s(T)}
\end{equation*}
\end{lemma}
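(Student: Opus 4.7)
The plan is to exploit that $f(t) := \int_{\T} u(t,x) v(t,x)\, dx$ depends only on $t$, so it commutes with the Littlewood--Paley projectors and with $\partial_x$. Consequently $P_k[(\partial_x u)\, f] = (\partial_x P_k u)\, f$, and by the definition of $N^s(T)$ the problem reduces to the frequency-localized bound
\[
\Vert (\partial_x P_k u)\, f \Vert_{N^1_k(T)} \lesssim \Vert P_k u \Vert_{F^1_k(T)}\, \Vert f \Vert_{L^\infty_t([-T,T])},
\]
together with an $L^\infty_t$-bound of the form $\Vert f \Vert_{L^\infty_t} \lesssim \Vert u \Vert_{F^s(T)} \Vert v \Vert_{F^s(T)}$. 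Summing with the $2^{2ks}$-weights will then yield the claim.

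The heart of the matter is a crude embedding: for $w$ frequency-localized at $2^k$,
\[
\Vert w \Vert_{N^1_k} \lesssim 2^{-k} \Vert w \Vert_{L^\infty_t L^2_x}.
\]
To prove this I would fix $t_k$, set $\tilde w = \eta_0(2^k(t-t_k))\, w$, and use $|(\tau-n^3+i2^k)^{-1}| \leq \min(2^{-k}, |\tau-n^3|^{-1})$. A Cauchy--Schwarz in the dyadic modulation sum defining the $X_k$-norm gives $\Vert(\tau-n^3+i2^k)^{-1} \mathcal{F}_{t,x}[\tilde w]\Vert_{X_k} \lesssim 2^{-k/2} \Vert \tilde w \Vert_{L^2_{t,x}}$, while the narrow temporal support yields $\Vert \tilde w \Vert_{L^2_{t,x}} \lesssim 2^{-k/2} \Vert w \Vert_{L^\infty_t L^2_x}$. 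Applied to $\tilde w = (\partial_x P_k \tilde u)\, \tilde f$ for extensions chosen per frequency so that $\Vert P_k \tilde u \Vert_{F^1_k} \leq 2 \Vert P_k u \Vert_{F^1_k(T)}$ and globally so that $\Vert \tilde u \Vert_{L^\infty_t L^2_x} \lesssim \Vert u \Vert_{C([-T,T],L^2)}$, Bernstein's inequality combined with Lemma~\ref{lem:embeddingShorttimeSpaces}(i) produces the frequency-localized estimate; the $2^{-k}$ smoothing in $N^1_k$ exactly cancels the $2^k$ derivative loss.

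For the $L^\infty_t$-bound on $f$, Cauchy--Schwarz in $x$ yields $|f(t)| \leq \Vert u(t) \Vert_{L^2} \Vert v(t) \Vert_{L^2}$, and the embeddings $F^s(T) \hookrightarrow F^0(T) \hookrightarrow C([-T,T], L^2(\T))$ for $s \geq 0$ (from Lemma~\ref{lem:embeddingShorttimeSpaces}(ii)) give $\Vert f \Vert_{L^\infty_t[-T,T]} \lesssim \Vert u \Vert_{F^s(T)} \Vert v \Vert_{F^s(T)}$. Squaring the per-frequency bound, multiplying by $2^{2ks}$, summing over $k$, and taking square roots produces the advertised estimate.

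The main obstacle is that multiplication by an $L^\infty_t$ function $f$ is \emph{not} bounded on $F^1_k$ in general, so one cannot simply factor out $\Vert f \Vert_{L^\infty}$ at the level of the $F^1_k$-norm. The crude $N^1_k \hookleftarrow 2^{-k} L^\infty_t L^2_x$ embedding sidesteps this; it also explains the absence of any $T^\theta$-smallness in the estimate, since the spatial derivative is balanced exactly by the short-time smoothing with no room to spare.
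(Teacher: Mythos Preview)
Your argument is correct and takes a genuinely different, more elementary route than the paper. The paper works directly in Fourier--modulation variables: it decomposes the internal frequency $n_2$ in $\sum_{n_2}\hat u(n_2)\hat v(-n_2)$ dyadically into blocks $K_2$, and then performs a case split. When $K_2\lesssim K$ the ambient time localization at scale $K^{-1}$ is already fine enough to place all three factors in their $F^1_{k_i}$-spaces; when $K_2\gg K$ one must insert additional time localization at scale $K_2^{-1}$, incurring a factor $K_2/K$, and then handle the intermediate modulation range $K\le 2^{j_4}\le K_2$ by duality, picking up a harmless $K_2^{\varepsilon}$ loss absorbed by $s>0$.

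You bypass all of this by recognizing that $f(t)=\int uv$ is a scalar function of time, so it passes through $P_k$ and can be treated as an $L^\infty_t$ multiplier. The key step is your crude embedding $\Vert w\Vert_{N^1_k}\lesssim 2^{-k}\Vert w\Vert_{L^\infty_t L^2_x}$, which is an elementary consequence of the $X_k$-structure and the short time support; applied to $w=(\partial_x P_k\tilde u)\tilde f$ the $2^{-k}$ exactly cancels the derivative. Your approach is cleaner for this particular term and makes transparent why no case analysis on the hidden frequency $K_2$ is needed here: the zero-mode structure of $\int uv$ collapses that frequency entirely. The paper's approach, on the other hand, stays within the $f_{k_i,j_i}$-calculus used throughout Sections~\ref{section:shorttimeTrilinearEstimates} and~\ref{section:EnergyEstimates}, which is why it transfers verbatim to the analogous resonant contributions that arise after integration by parts in the energy estimate for \eqref{eq:mKdVmKdVSystem}.
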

\begin{proof}
In Fourier variables, we have
\begin{equation*}
\mathcal{F}_x (\partial_x u \int uv)(n)= in \hat{u}(n) \sum_{n_2} \hat{u}(n_2) \hat{v}(-n_2).
\end{equation*}
We divide the sum over $n$ and $n_2$ up into dyadic blocks $K$, $K_2$, respectively, so that $|n| \sim K$, $|n_2| \sim K_2$. Then, using the reductions and notation from Section \ref{section:EnergyEstimates}, we find for $K_2 \lesssim K$
\begin{equation*}
\begin{split}
&\quad K \sum_{j_4 \geq \log_2(K)} 2^{-j_4/2} \Vert 1_{D_{k_4,j_4}}(\tau_4,n) \int_{\tau_1+\tau_2+\tau_3=\tau_4} f_{k,j_1}(\tau_1,n) \\
&\quad \quad \sum_{|n_2| \sim K_2} f_{k_2,j_2}(\tau_2,n_2) f_{k_3,j_3}(\tau_3,-n_2) \Vert_{L^2_{\tau_4} \ell^2_n} \\
&\lesssim \prod_{i=1}^3 2^{j_i/2} \Vert f_{k_i,j_i} \Vert_{L^2}.
\end{split}
\end{equation*}

In case $K_2 \gg K$, we have to add localization in time to adjust to $K_2$ and have to estimate
\begin{equation*}
\begin{split}
&K_2 \sum_{j_4 \geq \log_2(K)} 2^{-j_4/2} \Vert 1_{D_{k_4,j_4}}(\tau_4,n) \int_{\tau_1+\tau_2+\tau_3=\tau_4} f_{k,j_1}(\tau_1,n) \\
&\quad \quad \sum_{|n_2| \sim K_2} f_{k_2,j_2}(\tau_2,n_2) f_{k_3,j_3}(\tau_3,-n_2) \Vert_{L^2_{\tau_4} \ell^2_n}.
\end{split}
\end{equation*}
For $2^{j_4} \geq K_2$ we use the argument from above. For the sum $K \leq 2^{j_4} \leq K_2$, we use duality to argue that
\begin{equation*}
K_2 \sum_{K \leq 2^{j_3} \leq K_2} 2^{-j_3/2} 2^{j_3/2} 2^{j_1/2} \prod_{i=1}^3 \Vert f_{k_i,j_i} \Vert_{L^2} \lesssim_\varepsilon K_2^\varepsilon \prod_{i=1}^3 2^{j_i/2} \Vert f_{k_i,j_i} \Vert_2.
\end{equation*}

This completes the proof.
\end{proof}
For the energy estimate we use the fundamental theorem of calculus and symmetrization to compute
\begin{equation*}
\begin{split}
&\Vert u(T) \Vert^2_{H^a} + \Vert v(T) \Vert^2_{H^a} = \Vert u_0 \Vert_{H^a}^2 + \Vert v_0 \Vert^2_{H^a} \\
&+ C \int_0^T dt \sum_{0=k_1+\ldots+k_4} \left( \sum_{i=1}^4 a(k_i) k_i \right) \hat{u}(t,k_1) \hat{u}(t,k_2) \hat{v}(t,k_3) \hat{v}(t,k_4) .
\end{split}
\end{equation*}

Note that trivial resonances are cancelled in the sum. Thereafter, the latter expression is amenable to the analysis from Section \ref{section:EnergyEstimates} up to additional resonances, which come up after the integration by parts. These are estimated like in Lemma \ref{lem:TrivialResonances}. This finishes the analysis of \eqref{eq:mKdVmKdVSystem} for small initial data. For large initial data one can argue by rescaling the torus (cf. \cite{Molinet2012}).

\section*{Acknowledgements}
Financial support by the German Research Foundation (IRTG 2235) is gratefully acknowledged. I would like to thank the anonymous referee for a careful reading of an earlier manuscript, which gave rise to many improvements.

\end{document}